\newsavebox{\@brx}
\newcommand{\llangle}[1][]{\savebox{\@brx}{\(\m@th{#1\langle}\)}%
  \mathopen{\copy\@brx\kern-0.5\wd\@brx\usebox{\@brx}}}
\newcommand{\rrangle}[1][]{\savebox{\@brx}{\(\m@th{#1\rangle}\)}%
  \mathclose{\copy\@brx\kern-0.5\wd\@brx\usebox{\@brx}}}
\begin{document}
\def\e#1\e{\begin{equation}#1\end{equation}}
\def\ea#1\ea{\begin{align}#1\end{align}}
\def\eq#1{{\rm(\ref{#1})}}
\theoremstyle{plain}
\newtheorem{thm}{Theorem}[section]
\newtheorem{lem}[thm]{Lemma}
\newtheorem{prop}[thm]{Proposition}
\newtheorem{cor}[thm]{Corollary}
\theoremstyle{definition}
\newtheorem{dfn}[thm]{Definition}
\newtheorem{ex}[thm]{Example}
\newtheorem{rem}[thm]{Remark}
\newtheorem{conjecture}[thm]{Conjecture}

\newcommand{\D}{\mathrm{d}}
\newcommand{\A}{\mathcal{A}}
\newcommand{\LL}{\llangle[\Big]}
\newcommand{\RR}{\rrangle[\Big]}
\newcommand{\LD}{\Big\langle}
\newcommand{\RD}{\Big\rangle}
\newcommand{\F}{\mathcal{F}}
\newcommand{\HH}{\mathcal{H}}
\newcommand{\X}{\mathcal{X}}
\newcommand{\PP}{\mathbb{P}}
\newcommand{\K}{\mathscr{K}}
\newcommand{\q}{\mathbf{q}}

\newcommand{\op}{\operatorname}
\newcommand{\C}{\mathbb{C}}
\newcommand{\N}{\mathbb{N}}
\newcommand{\R}{\mathbb{R}}
\newcommand{\Q}{\mathbb{Q}}
\newcommand{\Z}{\mathbb{Z}}
\renewcommand{\H}{\mathbf{H}}

\newcommand{\Etau}{\text{E}_\tau}
\newcommand{\E}{{\mathcal E}}
\newcommand{\G}{\mathbf{G}}
\newcommand{\eps}{\epsilon}
\newcommand{\g}{\mathbf{g}}
\newcommand{\im}{\op{im}}

\newcommand{\h}{\mathbf{h}}

\newcommand{\Gmax}[1]{G_{#1}}
\newcommand{\AW}{E}
\providecommand{\abs}[1]{\left\lvert#1\right\rvert}
\providecommand{\norm}[1]{\left\lVert#1\right\rVert}
\newcommand{\abracket}[1]{\left\langle#1\right\rangle}
\newcommand{\bbracket}[1]{\left[#1\right]}
\newcommand{\fbracket}[1]{\left\{#1\right\}}
\newcommand{\bracket}[1]{\left(#1\right)}
\newcommand{\ket}[1]{|#1\rangle}
\newcommand{\bra}[1]{\langle#1|}

\newcommand{\ora}[1]{\overrightarrow#1}

\providecommand{\from}{\leftarrow}
\newcommand{\bl}{\textbf}
\newcommand{\mbf}{\mathbf}
\newcommand{\mbb}{\mathbb}
\newcommand{\mf}{\mathfrak}
\newcommand{\mc}{\mathcal}
\newcommand{\cinfty}{C^{\infty}}
\newcommand{\pa}{\partial}
\newcommand{\prm}{\prime}
\newcommand{\dbar}{\bar\pa}
\newcommand{\OO}{{\mathcal O}}
\newcommand{\hotimes}{\hat\otimes}
\newcommand{\BV}{Batalin-Vilkovisky }
\newcommand{\CE}{Chevalley-Eilenberg }
\newcommand{\suml}{\sum\limits}
\newcommand{\prodl}{\prod\limits}
\newcommand{\into}{\hookrightarrow}
\newcommand{\Ol}{\mathcal O_{loc}}
\newcommand{\mD}{{\mathcal D}}
\newcommand{\iso}{\cong}
\newcommand{\dpa}[1]{{\pa\over \pa #1}}
\newcommand{\Kahler}{K\"{a}hler }
\newcommand{\0}{\mathbf{0}}

\newcommand{\B}{\mathcal{B}}
\newcommand{\V}{\mathcal{V}}

\newcommand{\M}{\mathfrak{M}}

\renewcommand{\Im}{\op{Im}}
\renewcommand{\Re}{\op{Re}}

\newcommand{\DD}{\Omega^{\text{\Romannum{2}}}}


\numberwithin{equation}{section}

\newcommand{\comment}[1]{\textcolor{red}{[#1]}} 

\makeatletter
\newcommand{\subjclass}[2][2020]{%
  \let\@oldtitle\@title%
  \gdef\@title{\@oldtitle\footnotetext{#1 \emph{Mathematics Subject Classification.} #2}}%
}
\newcommand{\keywords}[1]{%
  \let\@@oldtitle\@title%
  \gdef\@title{\@@oldtitle\footnotetext{\emph{Key words and phrases.} #1.}}%
}
\makeatother

\makeatletter
\let\orig@afterheading\@afterheading
\def\@afterheading{%
   \@afterindenttrue
  \orig@afterheading}
\makeatother

\title{\bf Twisted Sectors in Calabi-Yau Type Fermat Polynomial Singularities  and Automorphic Forms}
\author{Dingxin Zhang and Jie Zhou}
\date{}
\maketitle

\begin{abstract}
We study one-parameter deformations  of
Calabi-Yau type Fermat polynomial singularities along degree-one directions.
We show that twisted sectors in the vanishing cohomology are components of automorphic forms for certain triangular groups.
We prove consequently that genus zero Gromov-Witten generating series
of the corresponding Fermat Calabi-Yau varieties are components of automorphic forms.
The main tools we use are
mixed Hodge structures for quasi-homogeneous polynomial singularities, Riemann-Hilbert correspondence,
and genus zero mirror symmetry.
\end{abstract}

\setcounter{tocdepth}{2} \tableofcontents

\section{Introduction}

The interplay between Gromov-Witten (GW) theory (and other enumerative theories) and modular forms
(and more generally automorphic forms)
has been an interesting topic in the last few decades.
On the one hand, the identification between generating series of enumerative invariants
and automorphic forms
can often reduce computations on the infinite sequences
of invariants to finite computations and hence make it much easier for computational purposes.
On the other hand, the nice properties of automorphic forms
provide useful tools and perspectives to understand these invariants,
and to test new ideas and conjectures in enumerative theories.

Our main idea in approaching modularity and automorphicity in the generating series of GW invariants (and other enumerative invariants)
is illustrated in Figure  \ref{figureretraction} below.
\begin{figure}[h]
  \renewcommand{\arraystretch}{1}
\begin{displaymath}
\xymatrix{
&\text{A-model}\, \ar@{->}[ddr]_{\text{Gromov-Witten axioms}} \ar@{<-->}[rr]^{\text{mirror symmetry} }
 &&
 \ar@{->}[ldd]^{\textrm{mixed Hodge structures} }
 \text{B-model}
\\
&&&&\\
&&\text{D-modules: differential equations} \ar@{->}[dd]^{\text{Riemann-Hilbert correspondence}} &&\\
&&&&\\
&&\text{representations: automorphic bundles}&&\\
 }
\end{displaymath}
 \caption[automorphicbundles]{Automorphic bundles arise in both A-model and B-model.
Identification of the corresponding D-modules arising in the two models provides a mirror map. }
  \label{figureretraction}
\end{figure}

In the present work, for the A-model we shall study
GW invariants of the Calabi-Yau (CY) variety
\begin{equation}\label{eqnCYA}
M=\{p(x):=\sum_{i=0}^{n}x_{i}^{n+1}=0\}\subseteq \mathbb{P}^{n}\,,\quad n\geq 2\,,
\end{equation}
 and certain quotients thereof.
 The invariants and corresponding
genus zero mirror symmetry for the CY variety $M$
have been
extensively explored
\cite{Candelas:1990rm, Givental:1996, Givental:1998, Lian:1997, Lian:1998},
our main focus is then the orbifold GW invariants of the quotients of $M$, see e.g., \cite{Lee:2014, Iritani:2021} for related studies.

For the B-model, we consider the mixed Hodge structure on the vanishing cohomology of the polynomial singularity
\begin{eqnarray}\label{eqnDworksingularity}
f_a: \mathbb{C}^{n+1}&\rightarrow& \mathbb{C}\,,\nonumber\\
(z_0,z_1,\cdots, z_{n})&\mapsto& \sum_{i=0}^{n}z_{i}^{n+1}-a\prod_{i=0}^{n}z_{i}\,,\quad a\in S\,,
\end{eqnarray}
where $S:=\mathbb{C}-\{a^{n+1}=(n+1)^{n+1}\}$ is contained in the $\mu$-constant strata.
Closely related is the CY variety
\begin{equation}\label{eqnCYB}
Q_{a}=\{f_a=0\} \subseteq \mathbb{P}^{n}\,,\quad a\in S\,.
\end{equation}
The corresponding family of singularities or  CY varieties will be referred to as the \emph{Dwork family} in this work.
The vanishing cohomology of the singularity $f_{a}$ contains
a subspace $D_{\bold{0}}$
that is identical to the primitive part of  $H^{n-1}(Q_{a})$
under the Poincar\'e residue map (see Section \ref{sectwistedsectorsmhs}).
This space is mirror to a subspace of $H^{\mathrm{even}}(M)$
and connects to the genus zero GW theory of the CY space $M$.

The polynomial \eqref{eqnDworksingularity} is quasi-homogeneous with
weighted degree $q_{i}$ (called polynomial degree in what follows) of $z_{i}$ given by $q_{i}=1/(n+1)$ for $i=0,1,\cdots, n$.
It is of Calabi-Yau type in the sense that
the summation of the individual weighted degrees $q_{i}$ is $1$:
$\sum_{i=0}^{n}q_{i}=1$.
The quasi-homogeneity leads to great simplifications in the studies of mixed Hodge structures for the singularities as we shall see.

\subsubsection*{Automorphic bundles from vanishing cohomology}

``Twisted sectors'' attached to a singularity are defined by using the mixed Hodge structure on the vanishing cohomology.
They are represented by (leading parts of) geometric sections \cite{Arnold:1985, Arnold:1988}.
For a family of quasi-homogeneous polynomial singularities, they have very concrete descriptions. See Section \ref{sectwistedsectorsmhs}
for their definitions and descriptions.

Consider the Dwork family of singularities
\eqref{eqnDworksingularity}.
A direct calculation shows that each geometric section $s[\bold{z}^{\bold{m}}\Omega]$, labelled by a vector $\bold{m}$ (see Definition \ref{dfntwistedsectormonomials}), satisfies
a simple differential equation
and thus gives rise to a $D$-module $D_{\bold{m}}$.
This also gives a flat vector bundle, over a base $\mathcal{M}$ that is related to $S$ by a finite covering map.
By
the Riemann-Hilbert correspondence (more concretely the monodromy representation),
one obtains
an orbifold structure\footnote{To be precise,
the map to the coarse moduli is $T_{\bold{m}}\rightarrow \overline{\mathcal{M}}_{\bold{m}}$, where
$\overline{\mathcal{M}}_{\bold{m}}$ is obtained from $\mathcal{M}$
by filling in the elliptic points (i.e., points with finite-order monodromies). See Section \ref{secRHcorrespondence}
for detailed explanation.
Here we ignore this subtlety for notational convenience.}
$T_{\bold{m}}=\Gamma_{\bold{m}}\backslash\mathcal{H}$ on $\mathcal{M}$, where
$\Gamma_{\bold{m}}<\mathrm{PSL}_{2}(\mathbb{R})$ is a triangular group that acts on the upper-half plane $\mathcal{H}$.
The multi-valued map $s_{\bold{m}}: \mathcal{M}\rightarrow \mathcal{H}$ is provided by the Schwarzian (i.e., the ratio of the logarithmic solution and the regular solution near $a=\infty$ of the corresponding differential equation)
and will be called
the Schwarzian uniformization in this work.
Each cohomology sector  $D_{\bold{m}}$ then
corresponds to a monodromy representation
\begin{equation}\label{eqnintroautomorphicrepresentation}
\rho_{\bold{m}}: \,\Gamma_{\bold{m}}\cong \pi_{1}^{\mathrm{orb}}(T_{\bold{m}})\rightarrow \mathrm{End}\, (D_{\bold{m}})\,.
\end{equation}
The bundle $D_{\bold{m}}$ is an automorphic bundle for the triangular group $\Gamma_{\bold{m}}$
in the sense of the definition below.
\begin{dfn}\label{defautomorphicbundle}
Let $\Gamma<\mathrm{PSL}_{2}(\mathbb{R})=\mathrm{Aut}\,\mathcal{H}$ be a Fuchsian group acting on the upper-half plane $\mathcal{H}$.
Let $e :\Gamma\times \mathcal{H}\rightarrow \mathrm{GL}(\mathbb{V})$ be an automorphy factor, that is,
\begin{equation*}
e_{\gamma_{2}}(\gamma_{1}\tau)\circ e_{\gamma_1}(\tau)=e_{\gamma_{2}\circ \gamma_{1}}(\tau)\,,
\quad
\forall\,
\gamma_{1},\gamma_{2}\in \Gamma\,, \tau\in \mathcal{H}\,.
\end{equation*}
The bundle $\mathcal{V}_{e}:=\mathcal{H}\times_{e}\mathbb{V}$ over $\Gamma\backslash\mathcal{H}$
defined by $e\in H^{1}(\Gamma, \mathrm{GL}( \mathcal{O}(\mathcal{H}) \otimes \mathbb{V}))$
is called an automorphic bundle for $\Gamma$.
A (vector-valued) automorphic form with automorphy factor
$e$ is a meromorphic section of the automorphic bundle $\mathcal{V}_{e}$.
Concretely, it is given by a meromorphic function
$\phi:\mathcal{H}\rightarrow \mathbb{V}$ satisfying
\begin{equation}\label{eqnanalyticautomorphicform}
\phi(\gamma\tau)=e_{\gamma}(\tau)\phi(\tau)\,,
\quad
\forall\,
\gamma\in \Gamma\,, \tau\in \mathcal{H}\,.
\end{equation}
Similarly, one can define automorphic bundles
over the compactification of $\Gamma\backslash\mathcal{H}$
by specifying meromorphic behaviour at the cusps on $\Gamma\backslash\mathcal{H}$.
\end{dfn}
\begin{ex}
Taking the automorphy factor $e$ to be a representation $\rho:\Gamma\rightarrow \mathrm{GL}(\mathbb{V})$, then the automorphic bundle $\mathcal{V}_{e}$ is a flat bundle.
Taking $e$ to be the $j$-automorphy factor
\begin{equation}\label{eqnjautomorphy}
j(\gamma, \tau)=(c\tau+d)^{k}\,,\quad
\forall\, \gamma=
\begin{pmatrix}
a & b\\
c & d
\end{pmatrix}\in \Gamma\,,~k\in 2 \mathbb{Z}\,,
\end{equation}
then one obtains the notion of automorphic forms of weight $k$.
For a general $k\in \mathbb{Q}\,$,
 $(c\tau+d)^{k}$ is multi-valued, and needs to be accompanied by a multiplier system to yield an automorphy factor  \cite{Sch:1974, Rankin:1977ab}.
\end{ex}

One can furthermore find a triangular
group $\Gamma_{\star}=\Gamma_{n+1,\infty,\infty}<\mathrm{PSL}_{2}(\mathbb{R})$
such that for any $\bold{m}$ there exists a natural homomorphism
\begin{equation}\label{eqnauxiliarygroup}
\rho_{ \bold{m},\star}: \Gamma_{\star} \rightarrow \Gamma_{\bold{m}}\,.
\end{equation}
which respects the semi-simple parts of their  monodromy representations. See Section \ref{secexamples} for detailed discussions.
Then the composition
$\rho_{\bold{m}} \circ \rho_{\bold{m},\star}$ realizes $D_{\bold{m}}$ as a representation of $\Gamma_{\star} $. This tells that
all cohomology bundles are in fact automorphic bundles for $\Gamma_{\star} $.

\begin{thm}[Theorem \ref{thmautomorphicform}]\label{thmautomorphicformintro}
Consider the Dwork family of polynomial singularities \eqref{eqnDworksingularity}.
For each $\bold{m}$, there exists a triple $\bold{\ell}(\bold{m})=(\ell_{0}(\bold{m})=n+1, \ell_{1}(\bold{m}),\ell_{\infty}(\bold{m}))$,
such that
the corresponding flat bundle $D_{\bold{m}}$ over $\mathcal{M}$
is an automorphic bundle
 and thus
the geometric section
$s[\bold{z}^{\bold{m}}\Omega]$ is the component of an automorphic form
for the triangular group $\Gamma_{\ell_0(\bold{m}),\ell_1(\bold{m}),\ell_\infty(\bold{m})}<\mathrm{PSL}_{2}(\mathbb{R})$.
In particular, all of the geometric sections are components of automorphic forms for $\Gamma_{n+1,\infty,\infty}$.

\end{thm}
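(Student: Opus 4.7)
The plan is to reduce the statement to Schwarz's classical theorem on second-order Fuchsian ODEs with three regular singular points. By quasi-homogeneity of $f_a$, the periods of each geometric section satisfy an explicit hypergeometric-type ODE in $a$, and the Schwarzian of a ratio of two independent solutions uniformizes the parameter space by the upper half plane modulo a triangular group; at that point Riemann--Hilbert (already invoked before the theorem) packages $D_{\bold{m}}$ as an automorphic bundle.

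First, I would write down the Picard--Fuchs equation for $s[\bold{z}^{\bold{m}}\Omega]$. Since $p(x)=\sum z_i^{n+1}$ is quasi-homogeneous of weight $1/(n+1)$ in each variable and the deformation $-a\prod z_i$ has weight one, applying the Euler field together with Brieskorn--Malgrange reduction modulo the Jacobian ideal of $p$ expresses the Gauss--Manin derivatives $(a\partial_a)^k s[\bold{z}^{\bold{m}}\Omega]$ as linear combinations of finitely many geometric sections in the same orbit of weighted-degree shifts. The orbit has size one or two, yielding a first- or second-order ODE in $a$, equivalently in the invariant $a^{n+1}/(n+1)^{n+1}$, whose regular singular points are exactly $\{0,1,\infty\}$.

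Second, I would read off the local exponents from the indicial equation at each singular point. At $a=0$ the exponents are governed by the weighted polynomial degree $\sum_i(m_i+1)/(n+1)$; translating back to the $a$-plane yields an exponent difference of $1/(n+1)$, so $\ell_0(\bold{m})=n+1$ in every sector. At the conifold locus $a^{n+1}=(n+1)^{n+1}$ the node forces Picard--Lefschetz monodromy whose order determines $\ell_1(\bold{m})$, and at $a=\infty$ the monodromy is either of maximal unipotent type or of finite order depending on an arithmetic condition on $\bold{m}$, producing $\ell_\infty(\bold{m})\in \mathbb{Z}_{\geq 2}\cup\{\infty\}$. Schwarz's uniformization theorem then identifies $\mathcal{M}$ with the orbifold $\Gamma_{\bold{\ell}(\bold{m})}\backslash \mathcal{H}$ via the Schwarzian map $s_{\bold{m}}$, and Riemann--Hilbert identifies $D_{\bold{m}}$ with the flat bundle $\mathcal{H}\times_{\rho_{\bold{m}}}\mathbb{V}$. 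Pulling $s[\bold{z}^{\bold{m}}\Omega]$ back under $s_{\bold{m}}$ thus produces a meromorphic $\Gamma_{\bold{m}}$-equivariant function on $\mathcal{H}$, i.e.\ the components of an automorphic form in the sense of Definition \ref{defautomorphicbundle}.

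For the universal statement, composing $\rho_{\bold{m}}$ with the homomorphism $\rho_{\bold{m},\star}:\Gamma_\star \to \Gamma_{\bold{m}}$ of \eqref{eqnauxiliarygroup} realizes every $D_{\bold{m}}$ simultaneously as an automorphic bundle for $\Gamma_\star=\Gamma_{n+1,\infty,\infty}$. The main obstacle, in my view, lies in the second step: one has to check for every admissible $\bold{m}$ that the exponent differences take the form $1/\ell$ with $\ell\in\mathbb{Z}_{\geq 2}\cup\{\infty\}$ and that the hyperbolicity condition $1/\ell_0+1/\ell_1+1/\ell_\infty<1$ holds, and one has to exhibit explicit generators realizing $\rho_{\bold{m},\star}$ in a way compatible with the semi-simple parts of the monodromies on both sides. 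Once these ingredients are in place, the remainder is a direct application of Schwarz's theorem and the Riemann--Hilbert correspondence.
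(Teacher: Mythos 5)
Your overall skeleton --- compute the hypergeometric ODE for each sector, read off the orders of the local monodromies at $b=0,1,\infty$, and pass through Riemann--Hilbert and a quotient homomorphism to $\Gamma_{n+1,\infty,\infty}$ --- matches the paper's proof of Theorem \ref{thmautomorphicform}. But there is one genuine error that breaks your argument for most sectors: the claim that the orbit under the Gauss--Manin reduction ``has size one or two, yielding a first- or second-order ODE.'' By Lemma \ref{lemPicardFuchs} the reduced order is $n+1-\mathrm{cardinality}(\{m_{i}+1\}_{i=0}^{n})$, which equals $n$ for $\bold{m}=\bold{0}$; so for $n\geq 3$ the Calabi--Yau sector $D_{\bold{0}}$ has rank $\geq 3$, and for the quintic it has rank $4$ and, as noted in Section \ref{secquinticexample}, does \emph{not} carry a symmetric-cube structure. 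Consequently the reduction to Schwarz's classical theorem on second-order Fuchsian equations --- taking the Schwarzian of a ratio of two solutions to produce the uniformizing map and a $j$-automorphy factor --- is simply unavailable for these sectors, and your second step cannot be carried out as stated.

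The paper's proof sidesteps this by exploiting the breadth of Definition \ref{defautomorphicbundle}: an automorphy factor is allowed to be an arbitrary representation $\rho:\Gamma\to\mathrm{GL}(\mathbb{V})$, so a flat bundle of \emph{any} rank whose local monodromies around $b=0,1,\infty$ have orders $\ell_0,\ell_1,\ell_\infty$ automatically has monodromy representation factoring through the quotient $\Gamma_{\ell_0,\ell_1,\ell_\infty}$ of $\pi_1(\mathcal{M})$ given by the presentation \eqref{eqnpresentationtriangulargroup}, and hence is an automorphic bundle --- no realization of the period map as a Schwarzian is needed. The only computation required is that the indicial roots at $b=0$ of \eqref{eqnPicardFuchsbasechanged} are distinct with differences in $\tfrac{1}{n+1}\mathbb{Z}$, giving $\ell_0\mid(n+1)$. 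If you restrict your Schwarzian argument to the rank-$\leq 2$ sectors (where it does recover the stronger statement that $F^{\mathrm{top}}D_{\bold{m}}$ carries the $j$-automorphy factor of weight $\mathrm{rank}\,D_{\bold{m}}-1$) and replace it by this representation-theoretic argument in general, the proof goes through. Your concern about verifying $1/\ell_0+1/\ell_1+1/\ell_\infty<1$ so that $\Gamma_{\bold{\ell}(\bold{m})}$ genuinely acts on $\mathcal{H}$ is legitimate, but it applies equally to the paper, which leaves it implicit.
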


By Deligne's canonical extension, the flat bundle $D_{\bold{m}}$ extends to a vector bundle $\overline{D}_{\bold{m}}$ on
the compactified orbifold $\overline{T}_{\bold{m}}$.
Both automorphic bundles $D_{\bold{m}}, \overline{D}_{\bold{m}}$ are equipped with  \cite{Schmid:1973}  Hodge filtrations $F^{\bullet}D_{\bold{m}}, F^{\bullet}\overline{D}_{\bold{m}}$ which are induced from the ones
defined on the vanishing cohomology bundle $H$.
By definition, sections of $D_{\bold{m}}, \overline{D}_{\bold{m}} $
 and in particular those of the sub-bundles $F^{p}D_{\bold{m}}, F^{p} \overline{D}_{\bold{m}}$,
  are components of automorphic forms with or without boundary conditions imposed at the elliptic points and cusps
\cite{Saber2014:vectorvalued, Candelori:2017vector, Candelori:2019vector}.

Moreover,
for the cases with $\mathrm{rank}\,D_{\bold{m}}\leq 2$,
the automorphy factors for the nontrivial top pieces $F^{\mathrm{top}}D_{\bold{m}}$
turn out to be given by the familiar $j$-automorphy factor \eqref{eqnjautomorphy}
by using
the Schwarzian uniformization, and the corresponding automorphic forms have weights given by $\mathrm{rank}\,D_{\bold{m}}-1$.
A frame of any other Hodge sub-bundle can then be obtained by applying the Gauss-Manin connection on that of $F^{\mathrm{top}}D_{\bold{m}}$.
From the resulting frame one can directly read off the
automorphy factor for the sub-bundle.
Sections of these Hodge sub-bundles are analogues of quasi-modular forms \cite{Kaneko:1995}. See Remark \ref{remquasimodularforms} and Section \ref{secexamples}
for detailed discussions on these.

\subsubsection*{Automorphicity of genus zero Gromov-Witten generating series}

The sector $D_{\bold{m}}$ with $\bold{m}=\bold{0}$ plays a distinguished role
for the following reasons.
\begin{itemize}
\item Consider the CY variety \eqref{eqnCYB}. As mentioned earlier,
the $D_{\bold{0}}$ sector in the vanishing cohomology of $f_a$ is identified with the primitive part $H^{n-1}(Q_{a})$
via the Poincar\'e residue map. 
This establishes the so-called Landau-Ginzburg/Calabi-Yau correspondence \cite{Chiodo:2010}
on the level of state spaces.
Under mirror symmetry \cite{Chiodo:2010}, it leads the identification of  the genus zero Gromov-Witten theory of $M$ in \eqref{eqnCYA} and the genus zero
Fan-Jarvis-Ruan-Witten (FJRW) theory of $p:\mathbb{C}^{n+1}\rightarrow \mathbb{C}$.

\item
Period integrals involving other sectors
$D_{\bold{m}}$ are often expanded
in terms of the Schwarzian $s_{\bold{0}}\in \mathcal{H}_{\bold{0}}$ arising from the sector $D_{\bold{0}}$, instead of  their own Schwarzians
$s_{\bold{m}}$ valued in a copy of upper-half plane $\mathcal{H}_{\bold{m}}$
labelled by $\bold{m}$.
From the perspective of the GW theory of the  mirror $M$, this is due to the divisor axiom.
Note that here although both $\mathcal{H}_{\bold{0}}$ and $\mathcal{H} _{\bold{m}}$
are isomorphic to the upper-half plane $\mathcal{H}$, the corresponding Schwarzian maps on $\mathcal{M}$ are not identical.
\end{itemize}
In the cases of Fermat cubic and quartic, we can in fact show (see Section \ref{secexamples}
 and  Appendix \ref{secmarginalquartic}) that the Schwarzian $s_{\bold{0}}$ corresponds to
the complex structure modulus $\tau$ for certain elliptic curve families.

Let us be a little more precise.
Consider the genus zero GW theory of the CY variety $M$.
According to genus zero mirror symmetry \cite{Givental:1996, Lian:1997} for CY hypersurfaces in projective spaces,
it is mirror to the
theory of variation of Hodge structures
for the degeneration near $a=\infty$ in the Dwork family $\{Q_{a}\}_{a\in S}$ in \eqref{eqnCYB}\footnote{The family of mirror manifolds of $M$ is in fact the quotient of the Dwork family \cite{Batyrev:1994hm},
whose associated variation of Hodge structures is however equivalent to that of the Dwork family itself. See \cite{Cox:1999} and \cite{Gross:2003} for nice expositions.}, and the generating series of 
genus zero one-point GW invariants of $M$, encoded by the Givental $I$-function,
is mirror to the period integral of the CY form in the Dwork family.
By Theorem \ref{thmautomorphicformintro}
and the Poincar\'e residue map mentioned above, we obtain the following result.
\begin{thm}[Theorem \ref{thmgenuszeroGWautomorphicform}]
Consider the genus zero Gromov-Witten invariants for the Calabi-Yau $(n-1)$-fold $M$
defined by the vanishing locus of the Fermat polynomial $\sum_{i=0}^{n}x_{i}^{n+1}=0$ in $\mathbb{P}^{n}$.
Then
the $I$-function
is a component of an automorphic form valued in $H^{*}(M)$ for the triangular group
$\Gamma_{n+1,\infty,\infty}$.
In particular, for the $n=2,3$ cases, the automorphic form is a vector-valued elliptic modular form (with possibly nontrivial multiplier systems) for certain congruence subgroup in
$\mathrm{PSL}_{2}(\mathbb{Z})$.
\end{thm}

The $D_{\bold{0}}$ sector  is  equipped with a variation of Hodge structures (VHSs) by the theory of mixed Hodge structure on the vanishing cohomology.
The Poincar\'e residue map mentioned above actually respects the VHSs, see Section \ref{secmixedHodgestructuresonvanishingcohomology}.
This sector
captures not only genus zero data but also essential higher genus information \cite{Bershadsky:1993cx, Witten:1993ed, Li2011:calabi, Costello:2012cy, Yamaguchi:2004bt}.
We demonstrate this on the Fermat quintic case which has been regarded as the prototypical example of mirror symmetry
since its birth \cite{Greene:1990Duality, Candelas:1990Calabi, Candelas:1990rm}.
Consider the Picard-Fuchs operator for the Dwork family $\{Q_{a}\}_{a\in S}$ in \eqref{eqnCYB}
\begin{equation}
\mathcal{D}_{\mathrm{quintic}}=\theta^{4}-z\prod_{k=1}^{4} (\theta+{k\over 5})\,,
\quad  z=5^{-5}a^{-5}\,,\quad \theta:={z}{\partial\over \partial z}\,.
\end{equation}
Take a basis of solutions near the maximally unipotent monodromy point $z=0$ and denote them by $I_{0},I_{1},I_{2},I_{3}$, respectively.
Set
$T={I_{1}/ I_{0}}$ and define
the Yamaguchi-Yau polynomial ring
\cite{Bershadsky:1993cx, Yamaguchi:2004bt}  to be (see Definition \ref{dfnYamaguchiYauring})
\begin{equation}
\mathcal{F}_{\mathrm{YY}}:=\mathbb{C}(z)[
{I_{0}'\over I_{0}},
{I_{0}''\over I_{0}},
{I_{0}'''\over I_{0}},
{T''\over T'}]\,,\quad '=\partial_{z}\,.
\end{equation}
A renowned conjecture \cite{Bershadsky:1993cx, Yamaguchi:2004bt}  in the studies of higher genus GW theory
that has stimulated many progresses in the last few decades is
that GW generating series of higher genus invariants
of the Fermat quintic are polynomials lying in the Yamaguchi-Yau ring $\mathcal{F}_{\mathrm{YY}}$.
This conjecture is recently proved in a series of works including
 \cite{Zinger:2009, Lho:2018holomorphic, Chang:2018polynomial, Guo:2018structure},
 in which properties of the Yamaguchi-Yau polynomial ring plays a central role.

 In this work we show the following result regarding the automorphicity of this ring
 which seems to be of independent interest.
\begin{thm}[Proposition \ref{propYYdifferentialring}, Theorem \ref{thmalgebraicindependence}]\label{thmYY}
Consider the Dwork family \eqref{eqnCYB} with $n=4$.
The Yamaguchi-Yau ring $\mathcal{F}_{\mathrm{YY}}$ is a differential ring
under $\partial_z$, with generators being components of automorphic forms for $\Gamma_{\infty,\infty,5}$.
Furthermore, the Yamaguchi-Yau generators are algebraically independent over $\mathbb{C}(z)$.
\end{thm}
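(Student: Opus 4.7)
The strategy is to first check closure of $\mathcal{F}_{\mathrm{YY}}$ under $\partial_z$, then identify its generators as components of automorphic forms using Theorem~\ref{thmautomorphicformintro}, and finally establish algebraic independence by a differential Galois argument.

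For the differential ring structure, write $a_k := I_0^{(k)}/I_0$ for $k=1,2,3$. Then $\partial_z a_k = a_{k+1} - a_1 a_k$, which immediately gives closure for $a_1$ and $a_2$. For $\partial_z a_3$ one needs $I_0^{(4)}/I_0 \in \mathcal{F}_{\mathrm{YY}}$; this follows by dividing the Picard-Fuchs relation $\mathcal{D}_{\mathrm{quintic}} I_0 = 0$ by $I_0$ to express the fourth derivative as a $\mathbb{C}(z)$-linear combination of $a_1, a_2, a_3$. For the generator $b := T''/T'$, setting $W := I_0 I_1' - I_0' I_1$ so that $T' = W/I_0^2$, one has $b = W'/W - 2 a_1$. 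Iterated differentiation, combined with the Picard-Fuchs equation applied to $I_1$ and the self-adjointness (special geometry) of the CY fourth-order operator, expresses $\partial_z b$ in terms of $a_1, a_2, a_3, b$ with coefficients in $\mathbb{C}(z)$.

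For the automorphicity, I would apply Theorem~\ref{thmautomorphicformintro} with $n = 4$ to the sector $\mathbf{m} = \mathbf{0}$: the flat bundle $D_{\mathbf{0}}$ is an automorphic bundle for $\Gamma_{\infty,\infty,5}$ with Schwarzian uniformizer $\tau = T = I_1/I_0$. The period $I_0$ trivializes $F^{\mathrm{top}} D_{\mathbf{0}}$, while the Gauss-Manin connection applied to $I_0$ yields frames of the lower Hodge sub-bundles, whose ratios with $I_0$ are $a_1, a_2, a_3$; by the discussion following Theorem~\ref{thmautomorphicformintro}, each is a component of a (vector-valued) automorphic form whose automorphy factor is read off from the resulting frame. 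The quantity $b$ is the $z$-logarithmic derivative of the uniformizing differential $dT$ and transforms as a quasi-automorphic form of weight~$2$, playing the role of $E_2$ in the classical modular setting.

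The main obstacle is the algebraic independence. The approach I would take invokes the differential Galois group of $\mathcal{D}_{\mathrm{quintic}}$ over $\mathbb{C}(z)$, which is the full symplectic group $\mathrm{Sp}_4(\mathbb{C})$; the corresponding Picard-Vessiot extension has transcendence degree $\dim \mathrm{Sp}_4 = 10$. The subextension $\mathbb{C}(z)(I_0, I_0', I_0'', I_0''')$ is the fixed field of the stabilizer of a single vector in the standard four-dimensional representation, which has codimension~$4$; therefore $I_0, I_0', I_0'', I_0'''$ are algebraically independent over $\mathbb{C}(z)$, and dividing by $I_0$ shows the same for $a_1, a_2, a_3$. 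Adjoining $b$ amounts to adjoining data about $I_1$, which is controlled by the parabolic subgroup fixing the Lagrangian line through $I_0$; choosing a Galois element that fixes this line but translates $I_1$ exhibits a Galois action under which $b$ is moved nontrivially while $a_1, a_2, a_3$ are preserved, forcing $b$ to be transcendental over $\mathbb{C}(z)(a_1, a_2, a_3)$. The delicate step is the precise identification of the differential Galois group and the verification of the relevant orbit and stabilizer dimensions; given these, the transcendence-degree count yields the stated four-fold algebraic independence.
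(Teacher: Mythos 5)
Your treatment of the differential ring structure and of automorphicity follows the paper's route: closure under $\partial_z$ comes from the Picard--Fuchs equation for $I_0$ together with the special-geometry relation expressing $T'''$ through $T''$, $T'$, $I_0'/I_0$, $I_0''/I_0$ (the last relation in \eqref{eqnrelationsforYY}), and automorphicity is Theorem \ref{thmautomorphicform} applied to the $D_{\bold{0}}$ sector via the Poincar\'e residue map. One caveat: the paper explicitly notes that for the quintic the automorphy factor of $F^{3}D_{\bold{0}}$ is \emph{not} a $j$-automorphy factor, so your ``weight $2$, playing the role of $E_2$'' gloss on $T''/T'$ overstates the analogy, though it is not load-bearing.

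For the algebraic independence you diverge from the paper, and your route has a genuine gap. The paper's argument is a sandwich: the differential Galois group is $\mathrm{Sp}_4(\mathbb{C})$, so $\mathrm{trdeg}(\mathcal{G}/\mathbb{C}(z))\geq 10$; on the other hand the relations \eqref{eqnrelationsforYY} cut the sixteen naive generators of the Picard--Vessiot extension down to the explicit ten-element set \eqref{eqnreducedPVextension}, giving $\mathrm{trdeg}\leq 10$; equality forces all ten generators, in particular the four Yamaguchi--Yau ones, to be algebraically independent. Your stabilizer-orbit computation handles $a_1,a_2,a_3$ correctly, but the step adjoining $b=T''/T'$ fails as stated: a Galois element fixing $I_0$ and sending $I_1\mapsto I_1+cI_0$ sends $T\mapsto T+c$, hence fixes $T'$, $T''$ and therefore $b$. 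To move $b$ you would need an element of the parabolic mixing $I_1$ with $I_2$ or $I_3$, and even then exhibiting a single element that moves $b$ only shows $b\notin\mathbb{C}(z)(a_1,a_2,a_3)$; transcendence requires the orbit of $b$ under the subgroup fixing that field to be positive-dimensional. You flag the orbit/stabilizer verification as the delicate step, but the specific mechanism you propose does not supply it; the paper's upper-bound-by-explicit-generators argument avoids this entirely and is the cleaner way to close the proof.
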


The above result on period integrals can be rephrased as follows.
Since $S$ is affine, we may freely pass between sheaves and ordinary modules by taking global sections and localizing.
Let $(H^{n-1}_{\mathrm{prim}}, \mathcal{V}, F^{\bullet}, \nabla, S)$ be the aforementioned variation of Hodge structures associated to the degeneration in the
Dwork family \eqref{eqnCYB}.
Consider the fractional field of the coordinate expressions of  elements in $H^{0}(S, \mathcal{V})$ with respect to the locally constant frame,
then the
proof of Theorem \ref{thmYY}, in particular \eqref{eqnrelationsforYY}, gives the set of generators and relations for its differential closure.

\begin{rem}
According to \eqref{eqnanalyticautomorphicform} in Definition \ref{defautomorphicbundle},
analytically automorphic forms are vector-valued functions on $\mathcal{H}$ that enjoy nice symmetries under the action of $\Gamma$ specified by the automorphy factors.
Once the generating series in enumerative theories
 are proved to be automorphic forms, one can then use
these symmetries to study relations between various enumerative theories defined over $\mathcal{M}$ such as
higher genus GW and FJRW theories  \cite{Bas:2018, Shen:2018, Li2020:higher}. These relations are
usually difficult to establish by other methods such as analytical continuation.\\
\end{rem}

Many discussions in this work in fact apply to more general one-parameter deformations
and to  more general CY type quasi-homogeneous polynomial singularities,
  than the one given in \eqref{eqnDworksingularity}, using the results  from \cite{Beukers:1989, Steebrink:1976}.
In the latter situation the discussions are certainly more involved
  due to the possible singularities at infinity.
We however have decided to focus only on Fermat polynomials and the
one-parameter deformations \eqref{eqnDworksingularity},
in order to keep the presentation relatively clean.
We also remark that for multi-dimensional deformations we do not have a good understanding on
the automorphic forms part yet, and wish to study them in a future investigation.

Quite a few results contained in this work are known to experts, especially those on the differential equations governing various sectors that have kept occurring in both
GW theory and singularity theory. The main contribution of this work is,
through  carefully working out the details for several examples,
 to offer a Hodge-theoretic formulation
for some of the problems in the studies of GW theory and its mirror symmetry, and to strengthen the connection between
GW generating series and
automorphic forms which recently seems to have provided useful perspectives to enumerative geometry.

\subsection*{Organization of the paper}

In Section \ref{secDmoduleandautomorphicbundles} we study the D-modules underlying various  cohomology sectors in the Dwork family, and
the corresponding automorphic bundles determined from the Riemann-Hilbert correspondence.
We supply some examples in which the computations are carefully worked out.
In addition, for the Fermat quintic we use differential Galois theory to study algebraic properties of the  Yamaguchi-Yau ring.

In Section \ref{secmirrorsymmetry} we consider genus zero mirror symmetry of Fermat Calabi-Yau varieties.
Applying the results in Section \ref{secDmoduleandautomorphicbundles}  we show that genus zero GW generating series are components of automorphic forms.
We focus on quotients of the Fermat quartic K3 surface as examples, by showing that twisted sectors in the Chen-Ruan
cohomology are matched with twisted sectors of the mirror singularity theory and the corresponding GW generating series are matched with oscillating integrals.

Some preliminaries on singularity theory are collected in Appendix  \ref{secpreliminariesonsingularities}.
Detailed discussions on elliptic modular forms
appearing in the Fermat quartic case are carried out in Appendix \ref{secmarginalquartic}.

\subsection*{Acknowledgement}

We thank Todor Milanov, Yongbin Ruan, Yefeng Shen, Hsian-Hua Tseng and Yingchun Zhang for useful conversations and communications.
This work is supported by national key research and development
program of China No. 2022YFA1007100 and No. 2020YFA0713000 as well as NSFC No.~12371011.
The work of J.~Z. is also partially supported by
a start-up grant at Tsinghua University and the Young overseas high-level talents introduction plan of China.

\section{Cohomology bundles and automorphic forms}
\label{secDmoduleandautomorphicbundles}

In this section,
we first explain the definition of twisted sectors using the mixed Hodge structure on the vanishing cohomology of a quasi-homogeneous singularity.
Then we consider special one-parameter families of Calabi-Yau type Fermat polynomial singularities, with an emphasis on the Dwork family
\eqref{eqnDworksingularity}.
We compute the D-modules corresponding to the twisted sectors of the latter
and
conclude that they correspond to automorphic bundles for certain
triangular groups.
Our main tools are the theory of mixed Hodge structures for quasi-homogeneous polynomial singularities and
Riemann-Hilbert correspondence.

We follow closely the notation and expositions in \cite {Kulikov:1998}.
For completeness, some basics on singularities are collected in
Appendix \ref{secpreliminariesonsingularities}, following
\cite{Saito:1983, Matsuo:1998, Kulikov:1998, Saito:08}.
Readers who are unfamiliar with singularity theory are referred there for a quick introduction.

\subsection{Twisted sectors for quasi-homogeneous singularities}
\label{sectwistedsectorsmhs}

Let $z=(z_0,z_1,\cdots ,z_{n})$
be the standard coordinates on $\mathbb{C}^{n+1}$ and
$f:\mathbb{C}^{n+1}\rightarrow \mathbb{C}$ be a quasi-homogeneous polynomial,
which has an isolated singularity at the origin,
with the polynomial degree of $z_{i}$ given by $q_{i},i=0,1,\cdots,n$.
Consider the one-parameter deformation
\begin{equation}\label{eqnFdfn}
F:\, X=\mathbb{C}^{n+1}\times S\rightarrow \mathbb{C}\,,
\quad
(z, a)\mapsto f(z)-a e_1(z)\,,
\end{equation}
where
\begin{itemize}
\item
the monomial
\begin{equation}\label{eqnoneparametermarginaldeformation}
e_{1}(z)=\prod_{i=0}^{n} z_{i}^{d_{i}}\,,\quad d_{i}\in \mathbb{N}
\end{equation}
is of polynomial degree $1$. For example, for the Fermat polynomial $f= \sum_{i=0}^{n}z_{i}^{n+1}$, one can take $e_1(z)=\prod_{i=0}^{n}z_{i}$
which then gives the Dwork family \eqref{eqnDworksingularity}.
\item the deformation space is
\begin{equation}
S=\mathbb{C}-\Delta\,,
\end{equation}
with $\Delta$ the discriminant locus, such that the projective variety $Q_{a},a\in S$
defined as the vanishing locus of $f_{a}(z):=f(z)-ae_{1}(z)$ in the corresponding weighted projective space is nonsingular.
\end{itemize}
 Then one can check that $S$
lies in the $\mu$-constant stratum.\footnote{
This ensures that in the setting of Appendix \ref{secpreliminariesonsingularities}
one has that
$X= \mathbb{C}^{n+1}\times S$
 with $q:X\rightarrow S$ and $\varphi=(F,q):X\rightarrow \mathbb{C}\times S$, with no need to restrict to smaller neighborhoods.
 Hence one can focus on polynomials instead of their germs.}
In particular, the spectrum (a discrete set of invariant associated to the Hodge structure of the vanishing cohomology, see Appendix \ref{secpreliminariesonsingularities}, p.\pageref{spectrum}) of $f_{a}:X(a)=\mathbb{C}^{n+1}\rightarrow \mathbb{C}$ is independent of $a$.

\subsubsection{Mixed Hodge structure on vanishing cohomology}
\label{secmixedHodgestructuresonvanishingcohomology}
Consider a quasi-homogeneous polynomial
  $f_{a}:X(a)=\mathbb{C}^{n+1}\rightarrow \mathbb{C}$ with an isolated
  critical point at \(0\). Denote by \(X(a)_{\infty}\) its Milnor fiber, see
\eqref{eqngeneralsingularity}.
The vanishing cohomology $H(a):=H^{n}(X(a)_{\infty},\mathbb{C})$ has an eigenspace decomposition\footnote{The last isomorphism
  is the isomorphism $\psi(a)$
  given in \eqref{eqnvanishingcohomologyasfiber}
 in Appendix  \ref{secpreliminariesonsingularities}, whose notation we omit. Structures defined on $C_{\alpha}$, such as the Hodge filtration,
are usually transferred to $H_{\lambda}$ via this isomorphism without explicitly mentioning.}
\begin{equation}\label{eqnpsiiso}
H(a)=\bigoplus_{\lambda:|\lambda|=1}H_{\lambda}(a)
\cong
\bigoplus_{\alpha:-1<\alpha\leq 0}C_{\alpha}(a)
\,,
\end{equation}
where $\lambda=e^{-2\pi i \alpha}$ is the eigenvalue of the monodromy $T=T_{ss}T_{u}=T_{ss}e^{2\pi i N}$
of the singularity $f_{a}$
around its singular fiber $f_{a}^{-1}(0)$, and $C_{\alpha}(a)\cong H_{\lambda}(a)$ is the corresponding generalized eigenspace
of the operator $t\partial_t$ (see \eqref{eqnFpC}).
In particular, one has the following weight filtration, as part of the data of the mixed Hodge structure $(W_{\bullet}(a), F^{\bullet}(a))$ on $H(a)$
\begin{equation*}
\cdots \subseteq W_{n-1}H(a)=0\subseteq W_{n} H(a)=\bigoplus_{\lambda\neq 1}H_{\lambda}(a)\subseteq W_{n+1} H(a)=H(a)\subseteq\cdots
\end{equation*}
This gives rise to the exact sequence
\begin{equation}\label{eqngrWofvanishingcohomology}
0\rightarrow \mathrm{gr}^{W}_{n}H(a)=\bigoplus_{\lambda\neq 1}H_{\lambda}(a)\rightarrow H(a)\rightarrow \mathrm{gr}^{W}_{n+1}H(a)=\bigoplus_{\lambda= 1}H_{\lambda}(a)\rightarrow 0
\end{equation}

The projective CY variety $Q_{a}$  carries a Hodge structure.
It is a standard fact that $\mathrm{gr}^{W}_{n+1}H(a)$ is isomorphic to the Hodge structure
on the primitive part $H^{n-1}_{\mathrm{prim}}(Q_a)$ of $H^{n-1}(Q_a)$ via the Poincar\'e residue map.
In fact, by the quasi-homogeneity one has (see e.g., \cite[Chapter 3.1.13]{Dimca:1992}) $H(a)\cong H^{n}(U_{a})$, where
$U_a$ is the variety given by $f_{a}=1$ in $\mathbb{C}^{n+1}$.
This variety $U_{a}$ can be compactified to
a projective variety $Y_{a}$, such that $U_a=Y_{a}-Q_{a}$ is realized as a hypersurface complement. The Gysin sequence then gives a sequence on mixed Hodge structures
\begin{equation}\label{eqnGysin}
\cdots\rightarrow H^{n}(Y_{a})\rightarrow H^{n}(U_{a})\xrightarrow{\mathrm{res}}H^{n-1}(Q_{a})\otimes \mathbb{C}(-1)\rightarrow H^{n+1}(Y_{a})\rightarrow\cdots
\,,
\end{equation}
where $\mathrm{res}$ is the Poincar\'e residue map and $\mathbb{C}(-1)$ is the Tate structure of weight $2$. Here the weight filtration on $H^{n}(U_{a})$
is given by
\begin{equation}\label{eqnMHSonU}
W_{n}H^{n}(U_{a})=\mathrm{im}\left(H^{n}(Y_{a})\rightarrow H^{n}(U_{a})\right)\subseteq W_{n+1}H^{n}(U_{a})=H^{n}(U_{a})\,.
\end{equation}
Combining \eqref{eqngrWofvanishingcohomology}, \eqref{eqnGysin} and \eqref{eqnMHSonU} and the isomorphism $H(a)\cong H^{n}(U_{a})$, we obtain
\begin{equation}\label{eqnLGCY}
H(a)\rightarrow \mathrm{gr}^{W}_{n+1}H(a)\xrightarrow{\mathrm{res}}H^{n-1}_{\mathrm{prim}}(Q_a)\otimes \mathbb{C}(-1)\,.
\end{equation}
This map provides a realization \cite{Cecotti:1991} of  the LG/CY correspondence on the level of state spaces using
the language of mixed Hodge structures.

\subsubsection{Twisted sectors in vanishing cohomology}

Due to the quasi-homogeneity of $f_a$, the monodromy $T$ is semi-simple.
This makes the mixed Hodge structure $(W_{\bullet}(a), F^{\bullet}(a))$ on the vanishing cohomology $H(a)$ very concrete.

\begin{dfn}\label{dfntwistedsectormonomials}
Denote $\bold{m}=(m_{i})_{i=0}^{n}$ and $\bold{z}^{\bold{m}}=
\prod_{i=0}^{n}z_{i}^{m_{i}}$.
Let
\begin{equation}\label{eqnmgrading}
\beta(\bold{m})=\sum_{i=0}^{n} m_{i}q_{i}\,,
\end{equation}
and $\alpha(\bold{m})$  be the unique integral shift of $\beta(\bold{m})$ that is
valued in $(-1,0]$.
\end{dfn}
A basis of
$(f_{a})_{*}\Omega_{X(a)/\mathbb{C}}^{n+1}$
is represented by the differential forms
\begin{equation}\label{eqnmbasis}
\bold{z}^{\bold{m}}\Omega
\,, \quad \Omega:=\wedge_{i=0}^{n}dz_{i}\,,\quad
\bold{z}^{\bold{m}}\in \mathrm{Jac}\,(f_{0})\,.
\end{equation}
The classes $[\bold{z}^{\bold{m}}\Omega]$ of these differential forms generate the Brieskorn lattice
$ \mathcal{H}^{(0)}(a)$, which is an extension of the relative de Rham cohomology sheaf $\mathbb{R}^{n}{f_{a}}_{*}\Omega^{\bullet}_{X(a)/\mathbb{C}}$ (under an isomorphism provided by the map $df_{a}\wedge$).
Their Gelfand-Leray residues, called
geometric sections $s[\bold{z}^{\bold{m}}\Omega]$, define classes $\zeta[\bold{z}^{\bold{m}}\Omega]$ in the
vanishing cohomology through their leading parts.
These geometric sections are
eigenvectors of the operator $t\partial_{t}$ with eigenvalue (i.e., spectral number) given by $\beta=\beta(\bold{m})$. That is,
\begin{equation}\label{eqnpartialttwistedsectors}
\left(t\partial_{t} -\beta(\bold{m})\right)\, s [\bold{z}^{\bold{m}}\Omega]=0\,.
\end{equation}
 The Hodge filtration $F^{\bullet}(a)$ on the vanishing cohomology $H(a)$ is conveniently described using the geometric sections as follows.
Recall that the Hodge filtration $F^{\bullet}(a)$ on $H(a)$
is defined individually on each $H_{\lambda}(a)\cong C_{\alpha}(a)$.
Let
\begin{equation}\label{eqnbetap}
\beta=n-p+\alpha\,,
\end{equation}
where
$p$ is the degree in Hodge filtration and
$\alpha\in (-1,0]$.
Then
the space
$F^{p}H_{\lambda}(a)\cong F^{p}C_{\alpha}(a)$ is generated by
the classes $\zeta[\bold{z}^{\bold{m}}\Omega]$
obtained from
the leading parts of the geometric sections $s[\bold{z}^{\bold{m}}\Omega]$
such that $\beta(\bold{m}) \leq \beta$ in \eqref{eqnbetap}.
Due to the quasi-homogeneity, the leading part $\zeta[\bold{z}^{\bold{m}}\Omega]\in F^{p}H_{\lambda}(a)$ of
the geometric section $s[\bold{z}^{\bold{m}}\Omega]$ is essentially the geometric section $s[\bold{z}^{\bold{m}}\Omega]$  itself.
See \eqref{eqnzetaclass} in Appendix  \ref{secpreliminariesonsingularities} for more detailed explanations.

One then has
a decomposition for $\mathrm{gr}_{F}^{\bullet}H_{\lambda}(a)$  (recall $\lambda=e^{-2\pi i \beta}, n+\alpha-\beta=p$)
\begin{equation}\label{eqndecompositionbydegree}
   H_{\lambda,\beta}(a):=\mathrm{gr}_{F}^{n+\alpha-\beta} H_{\lambda}(a)\cong
   \mathrm{gr}_{F}^{n+\alpha-\beta}  C_{\alpha}(a)
   \,,\quad  \mathrm{gr}_{F}^{\bullet}H_{\lambda}(a)=\bigoplus_{\beta:\, e^{-2\pi i \beta}=\lambda}H_{\lambda,\beta}(a)
   \,,
 \end{equation}
 where elements in $H_{\lambda,\beta}(a)$ are given by the classes $\zeta[\bold{z}^{\bold{m}}\Omega]$ with $\beta=\beta(\bold{m})$.
 For a quasi-homogeneous singularity, the spectrum $\beta$ together with the dimensions $n_{\beta}:=\mathrm{dim}H_{\lambda,\beta}$ can be easily worked out
 following the methods in \cite{Kulikov:1998}.\\

 The structures of the eigenspace decomposition \eqref{eqnpsiiso} and the associated graded ring \eqref{eqndecompositionbydegree}  with respect to the Hodge filtration lead to the notion of twisted sectors.
 \begin{dfn}\label{dfnvarioussectors}
   For a quasi-homogeneous singularity $f_{a}$, set $\beta=n-p+\alpha$ with $\alpha\in (-1,0]$ as in \eqref{eqnbetap}.
   An element in
   $H_{\lambda,\beta}(a)$
   is called a twisted sector.\footnote{In the physics literature, usually only
     sectors with $\lambda\neq 1$ are called twisted sectors. }
   It is called a
   \begin{itemize}
   \item relevant sector, if $0\leq \beta<1$.
   \item marginal sector, if $\beta=1$.
   \item irrelevant sector, if $\beta>1$.
   \end{itemize}
   The subspace of sectors with $\lambda=1$, that is  $\mathrm{gr}_{F}^{\bullet} H_{1}(a)$,
   is mapped to the space  $H^{n-1}_{\mathrm{prim}}(Q_{a})$
 under the Poincar\'e residue map \eqref{eqnLGCY} and hence is called the Calabi-Yau sector.
 In particular, the one with $\beta=0$, which spans $F^{n}H_{1}(a)$, is mapped to $F^{n-1}H^{n-1}_{\mathrm{prim}}(Q_{a})$ that is the space of Calabi-Yau forms on $Q_{a}$.
\end{dfn}

As explained above, the classes $\zeta[\bold{z}^{\bold{m}}\Omega]$  arising from the leading parts of the geometric sections $s[\bold{z}^{\bold{m}}\Omega]$ provide a basis for the space  $\bigoplus_{\lambda}\mathrm{gr}_{F}^{\bullet} H_{\lambda}(a)$.
The data $\bold{m}$ itself
actually arises from a group action on the ambient space $\mathbb{C}^{n+1}$ as follows.
Introduce the action of $G=(\mathbb{Z}/(n+1)\mathbb{Z})^{n+1}$ on the coordinate ring $\mathbb{C}[z_0,z_1,\cdots, z_n]$
\begin{equation}
g=(k_0,k_1,\cdots, k_{n})\in G: \,(z_0,z_1,\cdots ,z_{n})\mapsto (\xi_{n+1}^{k_0} z_0, \xi_{n+1}^{k_1} z_1, \cdots, \xi_{n+1}^{k_n}  z_n )\,,
\end{equation}
with $\xi_{n+1}:=\exp({2\pi i/( n+1)})$.
Then the eigenspaces of the $G$-action are exactly given by
 the monomials $\bold{z}^{\bold{m}}$.
It follows that
the space $H_{\lambda,\beta}(a)$ of twisted sectors can be further decomposed into the corresponding $G$-eigenspaces
\begin{equation}\label{eqncharacterdecomposition}
H_{\lambda,\beta}(a)=\bigoplus_{\bold{m}: \beta(\bold{m})=\beta}H_{\lambda, \beta, \bold{m}}(a)\,,\quad
\mathrm{gr}_{F}^{\bullet}H_{\lambda}(a)=\bigoplus_{\beta}\bigoplus_{\bold{m}: \beta(\bold{m})=\lambda}H_{\lambda, \beta, \bold{m}}(a)
=\bigoplus_{\bold{m}}H_{\lambda, \beta, \bold{m}}(a)\,.
\end{equation}

All of the above constructions for $f_{a}:X(a)=\mathbb{C}^{n+1}\rightarrow \mathbb{C}$ glue to constructions for
\begin{equation*}
\varphi=(F,q): \quad X=\mathbb{C}^{n+1}\times S\rightarrow \mathbb{C}\times S\,,
\end{equation*}
where  $F$ is the deformation in \eqref{eqnFdfn} and $q:X\rightarrow S$ is the projection.
In particular, the vanishing cohomology bundle is a local system over $ S$.
See \cite{Kulikov:1998} and references therein for details.
We use the same notation without  the '$(a)$' part  to denote the latter, whose restrictions at $a\in S$ recover
the former.

\subsection{Ordinary differential equations for geometric sections}
\label{eqndifferentialequationsforsectors}

From now on, we concentrate on the Dwork family \eqref{eqnDworksingularity} in the setting of
\eqref{eqnFdfn}.
The basis in \eqref{eqnmbasis} is now given by
\begin{equation}
\bold{z}^{\bold{m}}\Omega
\,, \quad \Omega:=\wedge_{i=0}^{n}dz_{i}\,,\quad
0\leq m_{i}\leq n-1\,.
\end{equation}
Our goal is to study differential equations satisfied by the section $\zeta[\bold{z}^{\bold{m}}\Omega]$ of the bundle $H_{\lambda,\beta(\bold{m})}\rightarrow S$ of twisted sectors.
Due to the constancy \eqref{eqnpartialttwistedsectors} under the $t\partial_{t}$-structure of the geometric sections
$s[\bold{z}^{\bold{m}}\Omega]$, this is equivalent to studying the $a, \partial_{a}$
actions on the latter---which is easier using the tool of differential forms. For this reason,
hereafter we shall not distinguish $[\bold{z}^{\bold{m}}\Omega]$ from the geometric section $s[\bold{z}^{\bold{m}}\Omega]$ notationally,
and properties on them will be transferred to those on the vanishing cohomology classes $\zeta[\bold{z}^{\bold{m}}\Omega]$ without explicitly mentioning.
For example, the following relation obtained from direct computations \cite{Kulikov:1998}
\begin{equation}\label{eqnGMongeometricsections}
\partial_{a}\, s[\bold{z}^{\bold{m}}\Omega]=-\partial_{t}\,s[\partial_{a}F\cdot \bold{z}^\bold{m} \Omega]\,
\end{equation}
gets translated into a relation\footnote{Note the $\partial_{t}^{n-p}$ operator part in the definition of $F^{p}C_{\alpha}$ in \eqref{eqnFpC}  of Appendix  \ref{secpreliminariesonsingularities}.} on the corresponding leading parts $\zeta[\bold{z}^{\bold{m}}\Omega]$
in vanishing cohomology
\begin{equation}\label{eqnGMonvanishingcohomologyclasses}
\partial_{a}\, \zeta[\bold{z}^{\bold{m}}\Omega]=-\,\zeta[\partial_{a}F\cdot \bold{z}^\bold{m} \Omega]\,.
\end{equation}

For simplicity we have decided to restrict ourselves to the case $e_{1}(z)=\prod_{i=0}^{n}z_{i}$ in the setting of
\eqref{eqnFdfn}.
Discussions for the more general cases
are similar.

Direct computations following the methods in \cite{Kulikov:1998} give the following result.
\begin{lem}\label{lemPicardFuchs}
Consider the Fermat polynomial $f= \sum_{i=0}^{n}z_{i}^{n+1}$.
Along the deformation $e_{1}(z)=\prod_{i=0}^{n}z_{i}$ in \eqref{eqnoneparametermarginaldeformation}, one has
\begin{equation}
S=\mathbb{C}-\{ a^{n+1}-(n+1)^{n+1}=0\}\,.
\end{equation}
The geometric section
$s[\bold{z}^{\bold{m}}\Omega]\in \mathcal{H}^{(0)}$  satisfies the differential equation
\begin{equation}\label{eqnPicardFuchs}
\left(\prod_{i=0}^{n}(\theta_{a}+i-n)
-{a^{n+1}\over (n+1)^{n+1}} \prod_{i=0}^{n}(\theta_a+m_{i}+1)\right)s[\bold{z}^{\bold{m}}\Omega]=0\,,\quad
\theta_{a}:=a{\partial\over \partial a}\,.
\end{equation}
This differential equation can be possibly reduced to a lower order equation obtained by factoring out one with highest possible order from the left,
therefore with reduced order
\begin{equation}\label{eqnreducedordertwistedsectorequation}
n+1-\mathrm{cardinality}(\{m_{i}+1\}_{i=0}^{n})\,.
\end{equation}
In particular, the differential equation is independent of the
$\mathfrak{S}_{n+1}$ permutations on $\bold{m}$.

\end{lem}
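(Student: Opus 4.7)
The plan is to derive \eqref{eqnPicardFuchs} directly in the Brieskorn lattice $\mathcal{H}^{(0)}$, combining the Gauss-Manin formula \eqref{eqnGMongeometricsections} with two algebraic inputs: Euler's quasi-homogeneity identity $F_a=\tfrac{1}{n+1}\sum_iz_i\partial_iF_a$, and the Jacobian relations $(n+1)z_i^{n+1}\equiv a\prod_j z_j\pmod{\mathrm{Jac}(F_a)}$, which follow from $z_i\partial_iF_a=(n+1)z_i^{n+1}-a\prod_jz_j$.

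Since $\partial_aF_a=-\prod_iz_i$, formula \eqref{eqnGMongeometricsections} reads $\partial_a\,s[\mathbf{z}^{\mathbf{m}}\Omega]=\partial_t\,s[\mathbf{z}^{\mathbf{m}+\mathbf{1}}\Omega]$, and iterating gives
\[
\partial_a^{n+1}\,s[\mathbf{z}^{\mathbf{m}}\Omega]=\partial_t^{n+1}\,s\!\left[\mathbf{z}^{\mathbf{m}}\textstyle\prod_iz_i^{n+1}\,\Omega\right].
\]
To convert the right-hand side back into an expression involving $s[\mathbf{z}^{\mathbf{m}}\Omega]$ itself, I would substitute $z_i^{n+1}\equiv\tfrac{a}{n+1}\prod_jz_j$ for each $i$ and carefully track the $\partial_t^{-1}$-corrections that arise from the Brieskorn identity $\partial_t\,s[dF_a\wedge\eta]=s[d\eta]$. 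Each substitution absorbs a $z_j$-factor into $\mathbf{z}^{\mathbf{m}}$; by the quasi-homogeneity of $s[\mathbf{z}^{\mathbf{m}}\Omega]$ under $t\partial_t$ with eigenvalue $\beta(\mathbf{m})$ (cf.\ \eqref{eqnpartialttwistedsectors}), this absorption produces an operator shift $(\theta_a+m_j+1)$, and after all $n+1$ substitutions one obtains the operator $\prod_i(\theta_a+m_i+1)$ acting on $s[\mathbf{z}^{\mathbf{m}}\Omega]$, multiplied by the prefactor $a^{n+1}/(n+1)^{n+1}$. The left-hand operator $\prod_i(\theta_a+i-n)$ in turn comes from converting $\partial_a^{n+1}$ to $\theta_a$ via $\theta_a=a\partial_a$ and absorbing the multiplicative $a^{n+1}$ that appears after reduction.

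The $\mathfrak{S}_{n+1}$-symmetry of \eqref{eqnPicardFuchs} is immediate, since $\mathbf{m}$ enters only through the unordered product $\prod_i(\theta_a+m_i+1)$. For the reduction \eqref{eqnreducedordertwistedsectorequation}, one can peel linear factors from the left: using the commutation $(\theta_a+c)\cdot a^{n+1}=a^{n+1}(\theta_a+c+n+1)$, the operator admits a factorization
\[
L=(\theta_a+k-n-1)\cdot L'
\]
whenever $(\theta_a+k-n-1)$ occurs as a factor of $\prod_i(\theta_a+i-n)$ (equivalently $k-1\in\{0,\ldots,n\}$) and $(\theta_a+k)$ occurs as a factor of $\prod_i(\theta_a+m_i+1)$ (equivalently $k\in\{m_i+1\}$). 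Since $\prod_i(\theta_a+i-n)$ contains each linear factor with multiplicity one, each distinct value $k\in\{m_i+1\}$ permits exactly one such peeling; iterating over all distinct values yields the reduced order $n+1-\mathrm{card}(\{m_i+1\})$.

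The principal technical obstacle is the middle step: each Jacobian substitution produces, besides an algebraic relation, a $\partial_t^{-1}$-correction coming from the Brieskorn identity, and these corrections must combine with the leading $\partial_t^{n+1}$ prefactor so as to cancel $\partial_t$ entirely and leave a pure polynomial in $\theta_a$. Performing this bookkeeping carefully, along the lines of the general computations in \cite{Kulikov:1998, Matsuo:1998} specialized to the Fermat-Dwork polynomial, produces the operator identity \eqref{eqnPicardFuchs}.
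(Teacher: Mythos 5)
Your plan is essentially the computation that the paper itself declines to write out: its proof of this lemma is two sentences, deferring the operator identity to ``straightforward but tedious computations following the methods in \cite{Kulikov:1998}'', so your sketch is, if anything, more explicit than the original. The one step you leave open --- the cancellation of the $\partial_t$-corrections --- does close, but by a slightly different mechanism than the one you name: the shift $(\theta_a+m_i+1)$ comes from the Leibniz term in the Brieskorn relation, not from the $t\partial_t$-eigenvalue of the geometric section. Concretely, applying $\partial_t\,s[dF_a\wedge\eta]=s[d\eta]$ to $\eta=(-1)^i z_i\mathbf{z}^{\mathbf{m}}\,dz_0\wedge\cdots\wedge\widehat{dz_i}\wedge\cdots\wedge dz_n$ gives
\[
(n+1)\,\partial_t\,s[z_i^{\,n+1}\mathbf{z}^{\mathbf{m}}\Omega]-a\,\partial_t\,s[\mathbf{z}^{\mathbf{m}+\mathbf{1}}\Omega]=(m_i+1)\,s[\mathbf{z}^{\mathbf{m}}\Omega]\,,
\]
and substituting $a\,\partial_t\,s[\mathbf{z}^{\mathbf{m}+\mathbf{1}}\Omega]=a\,\partial_a\,s[\mathbf{z}^{\mathbf{m}}\Omega]=\theta_a\,s[\mathbf{z}^{\mathbf{m}}\Omega]$ from \eqref{eqnGMongeometricsections} yields $(n+1)\partial_t\,s[z_i^{\,n+1}\mathbf{z}^{\mathbf{m}}\Omega]=(\theta_a+m_i+1)\,s[\mathbf{z}^{\mathbf{m}}\Omega]$, valid for every exponent vector $\mathbf{m}$. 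Since $\partial_t$ and $\theta_a$ commute, iterating over $i=0,\dots,n$ gives $(n+1)^{n+1}\partial_t^{\,n+1}s[\mathbf{z}^{\mathbf{m}+(n+1)\mathbf{1}}\Omega]=\prod_{i=0}^{n}(\theta_a+m_i+1)\,s[\mathbf{z}^{\mathbf{m}}\Omega]$; combined with your identity $\partial_a^{\,n+1}s[\mathbf{z}^{\mathbf{m}}\Omega]=\partial_t^{\,n+1}s[\mathbf{z}^{\mathbf{m}+(n+1)\mathbf{1}}\Omega]$ and $a^{n+1}\partial_a^{\,n+1}=\prod_{i=0}^{n}(\theta_a+i-n)$, this is exactly \eqref{eqnPicardFuchs}, with no uncancelled $\partial_t^{-1}$ remaining. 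Your peeling argument for the reduced order \eqref{eqnreducedordertwistedsectorequation} and the $\mathfrak{S}_{n+1}$-invariance are both correct as stated.

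The genuine omission is the first assertion of the lemma, the identification of $S$, which you do not address at all. It is elementary but should be included: a singular point of $f_a$ away from the origin is a nonzero common zero of $(n+1)z_i^{n}-a\prod_{j\neq i}z_j$; multiplying the $i$-th equation by $z_i$ forces $(n+1)z_i^{\,n+1}=a\prod_j z_j$ for all $i$, and taking the product over $i$ shows a nonzero solution exists precisely when $a^{n+1}=(n+1)^{n+1}$.
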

\begin{proof}
The first assertion about the discriminant locus follows from a direct computation
of the possible singularity locus of $f_{a}$ by using the Jacobian criterion.
In particular, the Milnor number is $\mu=\dim \mathrm{Jac}(f)=n^{n+1}$.
The assertion regarding the differential operators follow from straightforward but tedious computations following the methods in \cite{Kulikov:1998}.
\end{proof}

To reduce the number of singularities of the differential equations, we introduce a more convenient coordinate
\begin{equation}\label{eqnbasechange}
b={a^{n+1}\over (n+1)^{n+1}}\in\mathcal{M}=\mathbb{C}-\{0,1\}\,.
\end{equation}
As we shall see later, this change of coordinate will be reflected in the orbifold structure that one puts on $\mathcal{M}$.
Similar to Meijer's $G$-function \cite{Erdelyi:1981}, we introduce the series
\begin{equation*}
_{r}G_{s}(\alpha_{1},\cdots, \alpha_{r};\beta_{1},\cdots ,\beta_{s}; z)=\sum_{k=0}^{\infty}{\prod_{i=1}^{r}(\alpha_{i})_{k}\over \prod_{j=1}^{s}(\beta_{j})_{k}}z^{k}\,,\quad
(\alpha)_{k}={\Gamma(\alpha+k)\over \Gamma(\alpha)}\,.
\end{equation*}
Besides the obvious relation
\begin{equation*}
_{r}G_{s}(\alpha, \cdots; \alpha,\cdots;z)=\,_{r-1}G_{s-1}(\cdots; \cdots;z)\,,
\end{equation*}
this series satisfies the differential equation (if some $\beta_{j}=1$)
\begin{equation}\label{eqnGfunction}
\left(\prod_{j=1}^{s}(\theta_{z}+\beta_j-1)
-z \prod_{i=1}^{r}(\theta_z+\alpha_i)\right)\,_{r}G_{s}(\alpha_{1},\cdots, \alpha_{r};\beta_{1},\cdots ,\beta_{s}; z)=0\,,\quad
\theta_{z}:=z{\partial\over \partial z}\,.
\end{equation}

We can now see that  period integrals of the geometric section $s[\bold{z}^{\bold{m}}\Omega]$, obtained by integrating it on locally constant homology classes
in the homological Milnor fibration (see \eqref{eqnzetaclass} in Appendix  \ref{secpreliminariesonsingularities}),
can be expressed in terms of the above $_{r}G_{s}$ functions.
\begin{cor}\label{corperiodintegrals}
Consider the Dwork family of polynomial singularities \eqref{eqnDworksingularity}.
Period integrals of the geometric section  $s[\bold{z}^{\bold{m}}\Omega]$ are $\mathbb{C}$-linear combinations of
\begin{equation}\label{eqnperiodintegralofgeometricsection}
a^{\delta}\cdot\,_{n+1}G_{n+1}({\delta+m_0+1\over n+1},\cdots, {\delta+m_n+1\over n+1};{\delta+1\over n+1},\cdots,{\delta+n+1\over n+1}; b)\,,
\end{equation}
where
$\delta\in\{0,1,\cdots,n\}$ is such that $\delta+m_i+1\neq 0\in \mathbb{Z}/(n+1)\mathbb{Z}\,,\forall\, i=0,1,\cdots,n$.
\end{cor}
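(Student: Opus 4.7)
The plan is to reduce the Picard-Fuchs equation of Lemma \ref{lemPicardFuchs} to the hypergeometric equation \eqref{eqnGfunction} defining $_{n+1}G_{n+1}$, and then invoke the Frobenius method at $b=0$ to produce a full basis of solutions indexed by $\delta \in \{0,1,\ldots,n\}$.

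First, I would switch to the coordinate $b=a^{n+1}/(n+1)^{n+1}$ on $\mathcal{M}$, under which $\theta_a=(n+1)\theta_b$. Substituting the ansatz $s=a^\delta g(b)$ into \eqref{eqnPicardFuchs} and using the identity $(\theta_a+c)(a^\delta g)=a^\delta[(n+1)\theta_b+\delta+c]g$, the Picard-Fuchs operator reduces, after stripping the prefactor $a^\delta(n+1)^{n+1}$, to the hypergeometric-type operator
\begin{equation*}
\prod_{i=0}^{n}\Bigl(\theta_b+\tfrac{\delta+i-n}{n+1}\Bigr) - b\prod_{i=0}^{n}\Bigl(\theta_b+\tfrac{\delta+m_i+1}{n+1}\Bigr)
\end{equation*}
acting on $g(b)$. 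Direct comparison with \eqref{eqnGfunction} identifies the numerator parameters as $(\delta+m_i+1)/(n+1)$; the denominator parameters $(\delta+j)/(n+1)$ listed in the statement arise from the shifts $\beta_j-1=(\delta+j-1-n)/(n+1)$, using that $(\delta+n+1)/(n+1)=1+\delta/(n+1)$ differs from $\delta/(n+1)$ by an integer, so the two lists define the same differential operator once the Pochhammer-compatible normalization of the Frobenius solution is fixed.

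Next, I would verify that the $n+1$ values $\delta=0,1,\ldots,n$ produce a full basis of local solutions at $b=0$. The indicial polynomial of \eqref{eqnPicardFuchs} at $a=0$ equals $\prod_{i=0}^{n}(X+i-n)$, whose roots are the distinct integers $0,1,\ldots,n$; in the $b$-coordinate these become $\delta/(n+1)$ for $\delta=0,\ldots,n$, pairwise differing by non-integers, so each supports a genuine Frobenius solution with leading behavior $a^\delta$, and these are manifestly linearly independent. The condition $\delta+m_i+1\not\equiv 0\pmod{n+1}$ in the statement rules out the collision in which a numerator parameter $(\delta+m_i+1)/(n+1)$ coincides with the denominator $1$, i.e., the obvious reduction $_{n+1}G_{n+1}(\alpha,\ldots;\alpha,\ldots;b)=\,_{n}G_{n}(\ldots;\ldots;b)$; this is consistent with the order drop recorded in \eqref{eqnreducedordertwistedsectorequation}.

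Finally, since every period integral of $s[\mathbf{z}^{\mathbf{m}}\Omega]$ against a flat family of cycles in the homological Milnor fibration is a multivalued solution of \eqref{eqnPicardFuchs}, it lies in this finite-dimensional local solution space and is therefore a $\mathbb{C}$-linear combination of the basis elements \eqref{eqnperiodintegralofgeometricsection}. The main obstacle I anticipate is bookkeeping the integer shifts aligning the two denominator lists and handling the degenerate values of $\mathbf{m}$ at which the hypergeometric order genuinely drops, in agreement with \eqref{eqnreducedordertwistedsectorequation}; once this is settled the conclusion follows from matching local Frobenius expansions at $b=0$.
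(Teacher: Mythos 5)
Your overall strategy --- exhibit the functions \eqref{eqnperiodintegralofgeometricsection} as solutions of \eqref{eqnPicardFuchs}, check linear independence from the leading behaviour at $a=0$, and conclude that the period integrals, being solutions, are combinations of them --- is the same as the paper's. The parameter matching under $\theta_a=(n+1)\theta_b$ and the observation that the indicial roots $\delta/(n+1)$ at $b=0$ are pairwise non-congruent are fine (modulo the off-by-one in aligning the two denominator lists, which only affects the normalization of each Frobenius solution, not its span).

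The gap is in the final step. The functions in \eqref{eqnperiodintegralofgeometricsection} range only over the \emph{valid} $\delta$, and since $0\le m_i\le n-1$ forces $n-m_i\in\{1,\dots,n\}$, at least one $\delta\in\{0,1,\dots,n\}$ is always excluded: the valid $\delta$'s number $n+1-\mathrm{cardinality}(\{m_i+1\}_{i=0}^{n})\le n$ for \emph{every} $\mathbf{m}$, not merely for the degenerate ones you flag at the end. Hence the listed functions span a proper subspace of the $(n+1)$-dimensional local solution space of \eqref{eqnPicardFuchs}, and the statement ``the period integral solves \eqref{eqnPicardFuchs}'' does not by itself place it in that subspace --- a priori it could have nonzero components along the Frobenius solutions attached to the excluded $\delta$'s. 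The ingredient that closes this (and that the paper's proof leans on) is the second assertion of Lemma \ref{lemPicardFuchs}: the geometric section satisfies the \emph{reduced} equation, whose order \eqref{eqnreducedordertwistedsectorequation} equals exactly the number of valid $\delta$'s, so the listed functions, being that many linearly independent solutions, form a basis of the reduced equation's solution space. Alternatively, the vanishing of the excluded components can be read off directly from the oscillating-integral computation \eqref{eqnoscilatingintegralsintermsofsumoverdelta}. You need to supply one of these two arguments.
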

\begin{proof}
From \eqref{eqnGfunction} we see that  functions in \eqref{eqnperiodintegralofgeometricsection} satisfy the differential equations \eqref{eqnPicardFuchs} in Lemma \ref{lemPicardFuchs}.
From their asymptotic behaviors near $a=0$, we see that
these solutions are linearly independent. Hence this set of solutions span a vector space of dimension
$$\mathrm{cardinality} \left(\{\delta~|~\delta+m_i+1\neq 0\in \mathbb{Z}/(n+1)\mathbb{Z}\,,\forall i=0,1,\cdots,n\}\right)\,.$$
This is exactly the order of the differential equation with reduced order given in Lemma \ref{lemPicardFuchs}.
The desired claim then follows.
\end{proof}

\subsubsection{Real structure and oscillating integrals}\label{secintegralstructure}

Recall that for the cohomology of the twisted de Rham complex $(\Omega^{\bullet}_{X(a)}, d-df_{a}\wedge)$ one has
\begin{equation*}
 \mathbb{H}^{n+1}(\Omega^{\bullet}_{X(a)}, d-df_{a}\wedge)\cong H^{n+1}(\Gamma(X(a),\Omega_{X(a)}^{\bullet}),d-df_{a}\wedge)\,.
 \end{equation*}
The real structure of the latter is compatible with the one on the homology group of Lefschetz thimbles
via a comparison theorem, which in the present case is provided by the oscillating integrals\footnote{The association of oscillating integrals is not canonical. For example, one could have taken the differential to be $d-{1\over \hbar}df_{a}\wedge $
and correspondingly considered the oscillating integrals of $e^{-{1\over \hbar}f_{a}}\bold{z}^{\bold{m}}\Omega$, with $\hbar\in\mathbb{R}_{>0}$. This would also change the isomorphism
\eqref{eqncomparison} below.}
\begin{equation*}
\langle \bold{z}^{\bold{m}}\Omega,\gamma \rangle:=\int_{\gamma}e^{-f_{a}}\bold{z}^{\bold{m}}\Omega\,,\quad \gamma\in H_{n+1}(\mathbb{C}^{n+1}, (\mathrm{Re}\,f_{a})^{-1}(+\infty);\mathbb{Z})\,.
\end{equation*}
It turns out that the pairing $\langle-,-\rangle$ defined above gives a perfect pairing.
See \cite{Malgrange:1974, Pham:1985, Arnold:1988} for details, see also \cite{Gross:2011} for a nice exposition.
On the other hand, from the exact sequence for singular homology and cohomology groups with complex coefficients, we have
\begin{eqnarray}\label{eqncomparison}
H^{n}(X(a)_{\infty})&\cong& (H_{n}(X(a)_{\infty}))^{*}\nonumber\\
&\cong &( H_{n+1}(\mathbb{C}^{n+1}, (\mathrm{Re}\, f_{a})^{-1}(+\infty);\mathbb{C}))^{*}\nonumber\\
&\cong& H^{n+1}(\Gamma(X(a),\Omega_{X(a)}^{\bullet}),d-df_{a}\wedge)\,.
\end{eqnarray}
Hence the real structure on the vanishing cohomology
$H^{n}(X(a)_{\infty})$ is compatible with that on the twisted cohomology group.
Explicitly, the isomorphism \eqref{eqncomparison} maps the class $\zeta[\bold{z}^{\bold{m}}\Omega](a)\in F^{p}H_{\lambda}(a)$, arising from the leading part of
the geometric section $s[\bold{z}^{\bold{m}}\Omega](a)\in \Gamma(X(a),\mathcal{H}^{(0)})$, to the class $[\bold{z}^{\bold{m}}\Omega]$ of $\bold{z}^{\bold{m}}\Omega\in\Gamma(X(a),\Omega_{X(a)}^{n+1})
$.

The homology group $H_{n+1}(\mathbb{C}^{n+1}, (\mathrm{Re}\, f_{a})^{-1}( +\infty);\mathbb{Z})$ has rank $n^{n+1}$ and is generated by cycle classes of the form
\begin{equation*}
\gamma=\gamma_{0}\times \cdots\times \gamma_n\,, \quad \gamma_{i}\in H_{1}(\mathbb{C}, \mathrm{Re}\, z_{i}=+\infty;\mathbb{Z})\,,~i=0,1,\cdots, n\,.
\end{equation*}
The construction of the cycles is as follows. 
Let 
$c_{i}$ be the chain connecting $z_i=0$ and $z_{i}=+\infty$ along the real axis in $\mathbb{C}$,
and $\xi_{n+1}c_{i}$ be the one obtained from the action of $\xi_{n+1}\in \mathbb{C}^{*}$ on $c_{i}$,
with $\xi_{n+1}=e^{2\pi i\over n+1}$.
Then each
$H_{1}(\mathbb{C}, \mathrm{Re}\, z_{i}=+\infty;\mathbb{Z})$
has rank $n$ and is generated by
$c_{i}-\xi_{n+1}^{h_{i}}c_{i}$ with $ h_{i}\in \{1,\cdots, n\}$.
Therefore, $H_{n+1}(\mathbb{C}^{n+1}, (\mathrm{Re}\, f_{a})^{-1}(+\infty);\mathbb{Z})$ is
generated by
\begin{equation*}
\gamma_{\bold{h}}=\prod_{i=0}^{n}(1-\xi_{n+1}^{h_{i}})c_{i}\,,\quad
\bold{h}=(h_{0},\cdots, h_{n})\in \{1,2,\cdots,n\}^{\oplus (n+1)} \,.
\end{equation*}
The corresponding period integral can then be evaluated directly to be
\begin{equation}\label{eqnoscilatingintegrals}
\langle \bold{z}^{\bold{m}}\Omega, \gamma_{\bold{h}}
\rangle
=\int_{\gamma_{\bold{h}}}e^{-f_{a}}\bold{z}^{\bold{m}}\Omega
=\sum_{d\geq 0}\prod_{i=0}^{n}(1-\xi_{n+1}^{h_{i}(d+m_{i}+1)})\Gamma({d+m_{i}+1\over n+1}) {a^{d}\over (n+1)^{n+1} d!}\,.
\end{equation}

Let
\begin{equation*}
d=(n+1)\ell+\delta\,,\quad \ell\in \mathbb{N}\,,~\delta\in \{0,1,\cdots,n\}\,.
\end{equation*}
In order for the above period integral \eqref{eqnoscilatingintegrals} to be nonzero, one must have
\begin{equation*}
\delta+m_{i}+1\neq 0\in \mathbb{Z}/(n+1)\mathbb{Z}\,,\quad \forall\,i=0,1,\cdots, n\,.
\end{equation*}
The resulting expression of \eqref{eqnoscilatingintegrals} is then
\begin{equation}\label{eqnoscilatingintegralsintermsofsumoverdelta}
	\langle \bold{z}^{\bold{m}}\Omega, \gamma_{\bold{h}}
	\rangle=
\sum_{\delta}
\prod_{i=0}^{n}(1-\xi_{n+1}^{h_{i}(\delta+m_{i}+1)})\cdot
a^{\delta}\sum_{\ell\geq 0} \prod_{i=0}^{n}\Gamma({\ell}+{\delta+m_{i}+1\over n+1}){a^{(n+1)\ell}\over ((n+1)\ell+\delta)! (n+1)^{n+1}}
\,.
\end{equation}
Using the Gauss multiplication formula for Gamma functions, one has
\begin{eqnarray}\label{eqnoscilatingintegralintermsofhypergeometric}
&&\prod_{i=0}^{n}(1-\xi_{n+1}^{h_{i}(\delta+m_{i}+1)})\cdot a^{\delta}\sum_{\ell\geq 0} \prod_{i=0}^{n}\Gamma({\ell}+{\delta+m_{i}+1\over n+1}){a^{(n+1)\ell}\over ((n+1)\ell+\delta)! (n+1)^{n+1}}\\ \nonumber
& =& \prod_{i=0}^{n}(1-\xi_{n+1}^{h_{i}(\delta+m_{i}+1)})\cdot
 (2\pi )^{n\over 2}(n+1)^{-{1\over 2}-\delta-(n+1)} {\prod_{i=0}^{n} \Gamma({\delta+m_{i}+1\over n+1})
 \over \prod_{i=0}^{n} \Gamma({\delta+i+1\over n+1})}\cdot
  \text{expression in}~\eqref{eqnperiodintegralofgeometricsection}
 \,.
 \end{eqnarray}
Denote the linear dual of  $\gamma_{\bold{h}}$ by $\check{\gamma}_{\bold{h}}$,
we have
\[
[ \bold{z}^{\bold{m}}\Omega]=\sum_{\bold{h}}
\langle \bold{z}^{\bold{m}}\Omega, \gamma_{\bold{h}}\rangle\check{\gamma}_{\bold{h}}\,.
\]
We have therefore identified the real (and even integral) structure on $H(a)$ through computing oscillating integrals $\langle \bold{z}^{\bold{m}}\Omega, \gamma_{\bold{h}}
\rangle$ in \eqref{eqnoscilatingintegrals}.
See Section \ref{secexamples} for examples.

\subsection{Automorphic bundles corresponding to D-modules}
\label{secRHcorrespondence}

For any $\bold{m}$, denote
\begin{equation*}
\bold{m}+\ell\bold{1}:=(m_0+\ell,m_1+\ell,\cdots,m_{n}+\ell)\,,\quad \ell\in \mathbb{N}\,.
\end{equation*}
According to Lemma \ref{lemPicardFuchs} in Section \ref{eqndifferentialequationsforsectors},
for each $\bold{m}$ the vanishing cohomology classes $\{\zeta[\bold{z}^{\bold{m}+\ell\bold{1}}\Omega]\}_{\ell\in\mathbb{N}}$  span
a regular holonomic $D_{S}$-module
governed by  the relation
\eqref{eqnGMonvanishingcohomologyclasses}.
We denote the corresponding flat vector bundle over $S$
by $D_{\bold{m}}$.
In particular,
the relation \eqref{eqnGMonvanishingcohomologyclasses} provides an isomorphism
\begin{equation}\label{eqnshiftiso}
D_{\bold{m}}\cong D_{\bold{m}+\bold{1}}\,.
\end{equation}
To be more concrete, denote the reduced differential operator in Lemma \ref{lemPicardFuchs}
 by $\mathcal{D}_{\bold{m}}$, then
\begin{equation}\label{eqncommutingdifferentialoperators}
\mathcal{D}_{\bold{m}+\bold{1}}\circ\partial_a={1\over a}(\theta_{a}-n-1)\circ \mathcal{D}_{\bold{m}}\,.
\end{equation}
The Hodge filtration on $H_{\lambda}$
induces a filtration on $D_{\bold{m}}$
which we still denote by $F^{\bullet}$,
with the grading of the filtration conveniently recorded  in the value $\beta(\bold{m})$ according to \eqref{eqnbetap}.
As explained at the beginning of Section \ref{eqndifferentialequationsforsectors}, we can and shall regard $D_{\bold{m}}$ as
a $D_{S}$-module spanned by the geometric sections $s[\bold{z}^{\bold{m}}\Omega]$, as long as only the
$D_{S}$-module structure on the vanishing cohomology classes $\zeta[\bold{z}^{\bold{m}}\Omega]$ is concerned.\\

The D-module $D_{\bold{m}}$
descends to  $\mathcal{M}=\mathbb{P}^{1}-\{0,1,\infty\}$ along the map $S\rightarrow \mathcal{M}$ given in \eqref{eqnbasechange}.
Computationally, the resulting differential operator is obtained by substituting
$\theta_a=(n+1)\theta_b$
in \eqref{eqnPicardFuchs} in
Lemma \ref{lemPicardFuchs}
\begin{equation}\label{eqnPicardFuchsbasechanged}
\left(\prod_{i=0}^{n}(\theta_{b}+{i-n\over n+1})
-b \prod_{i=0}^{n}(\theta_b+{m_{i}+1\over n+1})\right)s[\bold{z}^{\bold{m}}\Omega]=0\,,\quad
\theta_{b}:=b{\partial\over \partial b}\,.
\end{equation}

We now prove a main result Theorem \ref{thmautomorphicform} in our work.
To proceed, we first recall some notation on orbifold lines.
Consider the orbifold
\begin{equation}
T_{\ell_{0},\ell_{1},\ell_{\infty}}:=\Gamma_{\ell_{0},\ell_{1},\ell_{\infty}}
\backslash \mathcal{H}\,,
\end{equation}
where $\mathcal{H}$ is the upper-half plane, and
$\Gamma_{\ell_{0},\ell_{1},\ell_{\infty}}<\mathrm{PSL}_{2}(\mathbb{R})$ is the triangular group
that admits the following presentations
\begin{eqnarray}
\Gamma_{\ell_{0},\ell_{1},\ell_{\infty}}&=&
\langle
\sigma_{0}, \sigma_{1},\sigma_{\infty}
\rangle/\langle \sigma_{0}^{\ell_0}=\sigma_{1}^{\ell_{1}}=\sigma_{\infty}^{\ell_{\infty}}
=\sigma_{\infty}\sigma_{1}\sigma_{0}=1\rangle\nonumber\\
&\cong&
\langle
\sigma_{0}, \sigma_{1}
\rangle/\langle \sigma_{0}^{\ell_0}=\sigma_{1}^{\ell_{1}}=(\sigma_{1}\sigma_{0})^{\ell_{\infty}}=1\rangle\,.
\label{eqnpresentationtriangulargroup}
\end{eqnarray}
In the above we have used the convention that
 $\sigma^{\ell}=1$ if $\ell=\infty$.
By tautology one has
\begin{equation}
\pi_{1}^{\mathrm{orb}}(T_{\ell_0,\ell_1,\ell_\infty})=\Gamma_{\ell_{0},\ell_{1},\ell_{\infty}}\,.
\end{equation}
We then have the following result.

\begin{thm}\label{thmautomorphicform}
Consider the Dwork family of polynomial singularities \eqref{eqnDworksingularity}, together with the map $S\rightarrow \mathcal{M}$ given in \eqref{eqnbasechange}.
For each $\bold{m}$, there exists a triple $\bold{\ell}(\bold{m})=(\ell_{0}(\bold{m})=n+1, \ell_{1}(\bold{m}),\ell_{\infty}(\bold{m}))$
such that
the corresponding flat bundle $D_{\bold{m}}$ over $\mathcal{M}$
is an automorphic bundle
 and thus
the geometric section
$s[\bold{z}^{\bold{m}}\Omega]$ is the component of an automorphic form,
for the triangular group $\Gamma_{\ell_0(\bold{m}),\ell_1(\bold{m}),\ell_\infty(\bold{m})}<\mathrm{PSL}_{2}(\mathbb{R})$.
In particular, all of the geometric sections are components of automorphic forms for $\Gamma_{n+1,\infty,\infty}$.
\end{thm}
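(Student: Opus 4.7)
The plan is to realize the monodromy representation of the flat bundle $D_{\bold{m}}$ as factoring through a triangular Fuchsian group $\Gamma_{\bold{m}}=\Gamma_{\ell_0(\bold{m}),\ell_1(\bold{m}),\ell_\infty(\bold{m})}$, so that $D_{\bold{m}}$ descends to a flat orbifold bundle on $T_{\bold{m}}=\Gamma_{\bold{m}}\backslash\mathcal{H}$, and then apply Definition \ref{defautomorphicbundle}: a flat bundle on such a quotient is an automorphic bundle with automorphy factor equal to the defining representation, and its (multi-valued) lifts to $\mathcal{H}$ are automorphic forms. The key ingredient is that the reduced operator $\mathcal{D}_{\bold{m}}$ obtained from \eqref{eqnPicardFuchsbasechanged} is a generalized hypergeometric equation on $\mathcal{M}=\mathbb{P}^1\setminus\{0,1,\infty\}$ with regular singularities exactly at those three points, so the analysis reduces to determining the local monodromies at each of them.

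First, I would read off the local exponents directly from the hypergeometric operator in \eqref{eqnPicardFuchsbasechanged}. At $b=0$ the exponents are $(n-i)/(n+1)$ for $i=0,\ldots,n$, and at $b=\infty$ they are $(m_i+1)/(n+1)$; after the reduction of order recorded in \eqref{eqnreducedordertwistedsectorequation} the surviving exponents at both points still lie in $(1/(n+1))\mathbb{Z}$. Consequently the local monodromy at $b=0$ is semisimple with eigenvalues that are $(n+1)$-th roots of unity, forcing $\ell_0(\bold{m})=n+1$; likewise the local monodromy at $b=\infty$ has a well-defined finite order $\ell_\infty(\bold{m})$ equal to the l.c.m.\ of the denominators of the reduced exponent set (or $\infty$ if a Jordan block is present, in which case we use the convention $\sigma^{\infty}=1$). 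At $b=1$ I would invoke the standard hypergeometric monodromy theorem (Beukers--Heckman): the local monodromy at $1$ is a pseudo-reflection whose single nontrivial eigenvalue is determined combinatorially from the hypergeometric parameters, and its order defines $\ell_1(\bold{m})$.

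With these three orders fixed, equipping $\mathcal{M}$ with the orbifold structure having isotropy of orders $\ell_0,\ell_1,\ell_\infty$ at $0,1,\infty$ identifies it as the orbifold $T_{\ell_0,\ell_1,\ell_\infty}$, whose orbifold fundamental group is $\Gamma_{\bold{m}}$ by the presentation \eqref{eqnpresentationtriangulargroup}. Since the local monodromies of $\mathcal{D}_{\bold{m}}$ have orders dividing $\ell_0,\ell_1,\ell_\infty$ by construction, the representation $\pi_1(\mathcal{M})\to\mathrm{GL}(D_{\bold{m}})$ factors through the quotient $\pi_1(\mathcal{M})\twoheadrightarrow\Gamma_{\bold{m}}$; Riemann--Hilbert then yields the desired flat orbifold bundle on $T_{\bold{m}}$, proving the first assertion. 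For the universal statement, since $\ell_0(\bold{m})\mid n+1$ always and $\ell_1,\ell_\infty\mid\infty$ trivially, the presentation \eqref{eqnpresentationtriangulargroup} produces the surjective homomorphism $\rho_{\bold{m},\star}\colon\Gamma_{n+1,\infty,\infty}\to\Gamma_{\bold{m}}$ of \eqref{eqnauxiliarygroup}; composing with $\rho_{\bold{m}}$ turns $D_{\bold{m}}$ into a $\Gamma_{n+1,\infty,\infty}$-automorphic bundle. The main obstacle I expect is the precise analysis of the monodromy at $b=1$: while the exponents at $0$ and $\infty$ are read off immediately from the operator, the pseudo-reflection at $1$ and its order require the full hypergeometric interlacing criterion, and one must carefully handle resonant or reducible cases where the pseudo-reflection degenerates to a unipotent transformation and $\ell_1$ must be set to $\infty$.
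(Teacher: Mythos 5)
Your proposal is correct and follows essentially the same route as the paper: identify $\ell_0,\ell_1,\ell_\infty$ with the orders of the local monodromies of the reduced hypergeometric operator at $b=0,1,\infty$, observe from the indicial roots that the local monodromy at $0$ is semisimple with eigenvalues that are $(n+1)$-th roots of unity, and then apply the Riemann--Hilbert correspondence together with Definition \ref{defautomorphicbundle} and the quotient homomorphism $\Gamma_{n+1,\infty,\infty}\to\Gamma_{\ell_0,\ell_1,\ell_\infty}$ for the final assertion. The only caveat is that, as the paper's own examples show (e.g.\ $\Gamma_{2,\infty,\infty}$ for $\bold{m}=(2,0,0,0)$ in the quartic case), the reduction of order can make $\ell_0(\bold{m})$ a proper divisor of $n+1$ rather than equal to it; the divisibility $\ell_0(\bold{m})\mid(n+1)$ is all the argument actually needs, and that is what the paper's proof establishes.
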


\begin{proof}
Let $\ell_{0}(\bold{m}),\ell_{1}(\bold{m}),\ell_{\infty}(\bold{m})$ be the orders of the local monodromies of $D_{\bold{m}}$ respectively.
It follows that under the Riemann-Hilbert correspondence,
$D_{\bold{m}}$ corresponds to a monodromy representation of
$\Gamma_{\ell_0(\bold{m}),\ell_1(\bold{m}),\ell_\infty(\bold{m})}$.
Consider the indicial roots at the singular point $b=0$ of \eqref{eqnPicardFuchsbasechanged}. It is direct to check that they are
all different, and thus
the indicial differences at $b=0$ are multiples of $1/(n+1)$.
This tells that $\ell_0(\bold{m})$ is a factor of $(n+1)$.
By Definition \ref{defautomorphicbundle},
the bundle $D_{\bold{m}}$ is an automorphic bundle for the triangular group $\Gamma_{\ell_0(\bold{m}),\ell_1(\bold{m}),\ell_\infty(\bold{m})}$
and the section $s[\bold{z}^{\bold{m}}\Omega]$ is a component of an automorphic form for this group.
The last assertion follows from the fact that
any representation of $\Gamma_{\ell_{0}, \ell_1,\ell_\infty}$ with $\ell_0\mid (n+1)$
is a representation of $\Gamma_{n+1,\infty,\infty}$ that is induced by a quotient homomorphism
$\Gamma_{n+1,\infty,\infty}\rightarrow \Gamma_{\ell_{0}, \ell_1,\ell_\infty}$.
\end{proof}

For each $\bold{m}$, let $\overline{\mathcal{M}}_{\bold{m}}$ be the orbifold obtained from $\mathcal{M}$ by filling in the elliptic points (i.e., those with finite-order monodromies).
As a consequence of Theorem \ref{thmautomorphicform}, the Riemann-Hilbert correspondence provides
$\overline{\mathcal{M}}_{\bold{m}}$ with the orbifold structure $T_{\ell_0(\bold{m}),\ell_1(\bold{m}),\ell_\infty(\bold{m})}$
with structure map
$\mathcal{H}\rightarrow T_{\ell_0(\bold{m}),\ell_1(\bold{m}),\ell_\infty(\bold{m})} $.
The possible multi-valuedness of $s[\bold{z}^{\bold{m}}\Omega]$
reflects the fact that it is a section of an orbi-bundle over this orbifold.
In the following, to ease the notation we use the same notation $\mathcal{M}$ for $\overline{\mathcal{M}}_{\bold{m}}$ which should
cause no confusion from the surrounding context.

We remark that regarding a representation of $\Gamma_{\ell_0, \ell_1, \ell_\infty} $
as one for $\Gamma_{n+1, \infty, \infty}$ can lose significant information of the origin one.
Even worse, the classification of representations for $\Gamma_{n+1, \infty, \infty}$ and explicitly computing analytic expressions for the corresponding automorphic forms
can already be nontrivial.\\

The bundle $D_{\bold{m}}$ comes  with a filtration $F^{\bullet}D_{\bold{m}}$ induced from the Hodge filtration on vanishing cohomology.
These bundles naturally extend to locally free sheaves over the compactified orbifold $\overline{T}_{\bold{\ell}(\bold{m})}$
by Schmid's results \cite{Schmid:1973} on limiting Hodge filtrations.
We denote the extended bundles by $\{F^{p}\overline{D}_{\bold{m}}\}_{p}$.
In particular, the bundle $\overline{D}_{\bold{m}}$  coincides with Deligne's canonical extension in the context of ordinary differential equations
with regular singularities.
To be more precise, the extension data\footnote{Note that
in Deligne's canonical extension one uses the convention $\lambda=e^{-2\pi i \alpha}$
for the eigenvalue of $T$, while for the exponent $L$ one uses $T=e^{2\pi i L}.$
}  is given by logarithms of the local monodromies called exponents $L$
such that the real parts of eigenvalues of $L$ lie in $[0,1)$.
See
\cite{Saber2014:vectorvalued, Candelori:2017vector, Candelori:2019vector} for detailed discussions on extensions.

 Once the representations are obtained, then
 by definition sections of $\overline{D}_{\bold{m}} $,
 and in particular those of the sub-bundles $F^{p} \overline{D}_{\bold{m}}$,
  are automorphic forms with boundary conditions imposed at the elliptic points and cusps.

\begin{rem}
 Note that
  the Picard group and canonical bundle for both
 the orbifold and its compactification are
 uniquely determined by the data of signature and admit very concrete descriptions.
 For example, consider the compact orbifold line $C=\overline{T}_{k_0, k_1,k_{2}}$, denote by $p$ a generic point on $C$.
 Then the Picard group has the presentation
 \begin{equation*}
 \mathrm{Pic}(C)=\langle  \mathcal{O}_{C}(p_{i}),i=0,1,2~|~\mathcal{O}_{C}(p_{i})^{\otimes k_{i}}\cong \mathcal{O}_{C}(p), i=0,1,2\rangle \,.
 \end{equation*}
  The canonical sheaf, which is isomorphic to the relative dualizing sheaf, is given by
 \begin{equation*}
 \Omega_{C}^{1}=\mathcal{O}_{C}((3-1)p-\sum_{i=0}^{2}p_{i})\,.
 \end{equation*}

  Consider as a special case the orbifold $T_{\ell_0,\ell_1,\ell_\infty}=\Gamma_{\ell_0,\ell_1,\ell_\infty}\backslash\mathcal{H}$ corresponding to the triangular group
  $ \Gamma_{\ell_0,\ell_1,\ell_\infty}$ arising from Theorem \ref{thmautomorphicform}. The sheaf $\Omega^{1}_{T_{\ell_0,\ell_1,\ell_\infty}}$ is not necessarily flat, yet it is
  an automorphic bundle
whose automorphy factor can be easily worked out  using the Schwarzian uniformization.
  This canonical sheaf enters through the  connection on the flat vector bundles $D_{\bold{m}}$
\begin{equation*}
\nabla: D_{\bold{m}}\rightarrow D_{\bold{m}}\otimes \Omega^{1}_{T_{\ell_0,\ell_1,\ell_\infty}}\,.
\end{equation*}
The flatness of $\nabla$, which is equivalent to the D-module structure of $D_{\bold{m}}$, yields a differential structure
of automorphic sections of $D_{\bold{m}}$.
The transcendental degree of the differential field is then determined by the dimension of the differential Galois group
which is closely related to the monodromy group \cite{Beukers:1989}.
See Section \ref{secquinticexample} for related discussions.
The
connection $\nabla$ extends to a meromorphic connection with only logarithmic pole on the
extended bundle $\overline{D}_{\bold{m}}$
\begin{equation*}
\overline{\nabla}: \overline{D}_{\bold{m}}\rightarrow \overline{D}_{\bold{m}}\otimes \Omega^{1}_{\overline{T}_{\bold{\ell}(\bold{m})}}(\log \sum_{p_{i}}p_{i})\,,
\end{equation*}
where the summation in the logarithmic canonical sheaf is over the loci of cusps.
 \end{rem}

\subsection{Examples}
\label{secexamples}

In this section we discuss the details for the
$n=2,3$ cases.
We shall work out
the automorphy factors for the top pieces $F^{\mathrm{top}}D_{\bold{m}}$ using the
Schwarzian uniformization.
This shows that their corresponding sections are automorphic forms with weights given by $\mathrm{rank}\,D_{\bold{m}}-1$.
For the $D_{\bold{0}}$ sector, we shall show that these automorphic forms
actually reduce to elliptic modular forms.
We shall also discuss the $n=4$ case briefly and mention some interesting results
that have shown to be important in the studies of higher genus mirror symmetry for quintic threefolds.

\subsubsection{Fermat cubic}
\label{seccubicmarginalelliptic}

Consider the $n=2$ case.
The spectrum that encodes the information of the
weight filtration and Hodge filtration
is given by  Table \ref{table-spectrumcubic} below.
\begin{table}[h]
  \centering
\caption{Spectrum of the cubic polynomial singularity $f_{a}$\,.}
  \label{table-spectrumcubic}
  \renewcommand{\arraystretch}{1.5} 
 \begin{tabular}{c|ccccc}
 \hline
$\beta$ & $0$ & ${1\over 3}$ & ${2\over 3}$ & $1$\\
 \hline
$n_{\beta}$  & $1$ & $3$ & $3$ & $1$\\
\hline
\end{tabular}
\end{table}
As vector spaces one has
\begin{eqnarray*}
\mathrm{gr}^{W}_{3}H(a)&=& H_{1}(a)= \mathrm{gr}_{F}^{2}H_{1}(a)\oplus \mathrm{gr}_{F}^{1}H_{1}(a)\,,\\
\mathrm{gr}^{W}_{2}H(a)&=&W_{2}H(a)=
H_{e^{-{4\pi i \over 3}}}(a)
\oplus
H_{e^{-{2\pi i \over 3}}}(a)=\mathrm{gr}_{F}^{1}H_{e^{-{4\pi i \over 3}}}(a)\oplus
\mathrm{gr}_{F}^{1}H_{e^{-{2\pi i \over 3}}}(a)\,.
\end{eqnarray*}
The pure Hodge structure on $\mathrm{gr}^{W}_{3}H(a)$ is mapped to that on $H^{1}(Q_a)$ under the Poincar\'e residue map \eqref{eqnLGCY}.

\paragraph{$D$-modules for geometric sections}

Differential equations satisfied by various sectors are given in Lemma \ref{lemPicardFuchs}.
The results are displayed in Table \ref{table-differentialequationcubic}.
In particular, the differential operator with $\bold{m}=(1,1,1)$ follows from the one
with $\bold{m}=(0,0,0)$ according to \eqref{eqncommutingdifferentialoperators}.
\begin{table}[h]
  \centering
\caption{Differential equations satisfied by the geometric sections $s[\bold{z}^{\bold{m}}\Omega]$, and orbifold structures on $\mathcal{M}$ provided by the corresponding D-modules under the Riemann-Hilbert correspondence.}
  \label{table-differentialequationcubic}
  \renewcommand{\arraystretch}{1.5} 
 \begin{tabular}{c|c|c}
 \hline
$\bold{m}$ & differential operator& orbifold structure\\
 \hline
$(0,0,0)$ & $(\theta_{a}-1)\theta_{a}-{a^3\over 3^{3}} (\theta_a+1)^2$ & $T_{3,\infty,\infty}$\\
$(1,0,0)$ & $\theta_{a}-{a^3\over 3^{3}} (\theta_a+1)$ & $T_{1,3,3}$  \\
$(1,1,0)$ & $\theta_{a}-{a^3\over 3^{3}} (\theta_a+2)$ & $T_{1,3,3}$\\
$(1,1,1)$ & $(\theta_{a}-2)\theta_{a}-{a^3\over 3^{3}} (\theta_a+2)^2$ & $T_{3,\infty,\infty}$\\
\hline
\end{tabular}
\end{table}

\paragraph*{The $\bold{m}=(0,0,0)$ sector.}
The differential operator is
\begin{equation}\label{eqncubicmarginalbeforetwist}
\mathcal{D}_{\mathrm{cubic}}:=\theta_{b}(\theta_{b}-{1\over 3})
-b (\theta_{b}+{1\over 3}) (\theta_{b}+{1\over 3}) \,,\quad b={a^{3}\over 3^{3}}\,.
\end{equation}
By dimension reasons we have\footnote{Hereafter we shall often identify the flat bundle $D_{\bold{m}}$ on $\mathcal{M}$
with the corresponding local system and $H$ with $\mathrm{gr}_{\bullet}^{W}H$, without explicit mentioning.}
$$D_{\bold{0}}\cong C_{0}=H_{1}\cong \mathrm{gr}_{W}^3 H\,.$$
The period map is then
\begin{equation}
\mathcal{M}=\mathbb{C}-\{0,1\}\rightarrow  \mathrm{Gr}(1,\mathrm{gr}^{W}_{3}H(b_{*}))
\,,\quad
b\mapsto
F^{3}\mathrm{gr}^{W}_{3}H(b)\,,
\end{equation}
where $b_{*}$ is a reference point on $\mathcal{M}$.
The monodromy group can be computed following \cite{Erdelyi:1981} or \cite{Beukers:1989}.
To be explicit, a basis of solution is given by
$$u_{3}=\,_{2}F_{1}({1\over 3},{1\over 3};{3\over 3};b)\,,\quad
u_{4}=b^{1\over 3}\,_{2}F_{1}({2\over 3},{2\over 3};{4\over 3};b)\,.
$$
Then up to conjugacy, the monodromies around $b=0,1,\infty$ are given by the following matrices
in $\mathrm{SL}_{2}(\mathbb{R})\times U_{1}(\mathbb{R})$
\begin{equation}\label{eqnmonodromycubicmarginal}
T_{0}=\xi_{3}^{-1}\otimes
\begin{pmatrix}
-2 & -1\\
3 &1
\end{pmatrix}\,,
\quad
T_{1}=
\begin{pmatrix}
1 & 0\\
-3 &1
\end{pmatrix}\,,
\quad
T_{\infty}=\xi_{3}\otimes
\begin{pmatrix}
1 & 1\\
0 &1
\end{pmatrix}\,,\quad T_{0}T_{1}T_{\infty}=\mathbb{1}\,.
\end{equation}

The Schwarzian uniformization using solutions to
\eqref{eqncubicmarginalbeforetwist}
then provides the base $\mathcal{M}$
with the orbifold structure $T_{3,\infty,\infty}=\Gamma_{3,\infty,\infty}\backslash\mathcal{H}$.
Furthermore, the sub-bundle
$F^{1}D_{\bold{0}}$
is the automorphic bundle with a $j$-automorphy factor \eqref{eqnjautomorphy}
\begin{equation}\label{eqnjautomorphycubic}
j: \,\Gamma_{3,\infty,\infty}\times \mathcal{H}\rightarrow \mathrm{GL}_{1}(\mathbb{C})\,,\quad
( \begin{pmatrix}
a & b\\
c &d
\end{pmatrix},\tau)\mapsto (c\tau+d)^{-1}\,.
\end{equation}
The bundle $D_{\bold{0}}=C_{0}=\mathrm{gr}^{W}_{3}H$
 can be extended to a locally free sheaf $(\mathrm{gr}^{W}_{3}\overline{H},\overline{\nabla})$ equipped with meromorphic connection on the compactification $\overline{T}_{3,\infty,\infty}$
of $\mathcal{M}$.
Deligne's canonical extension gives rise to a connection with only logarithmic pole
\begin{equation*}
\overline{\nabla}: \mathrm{gr}^{W}_{3}\overline{H}\rightarrow \mathrm{gr}^{W}_{3}\overline{H}\otimes \Omega^{1}_{\overline{T}_{3,\infty,\infty}}(\log(p_1+p_{\infty}))\,,
\end{equation*}
where $p_0, p_{1},p_{\infty}$ stands for the divisor represented by $b=0,1,\infty$ on the coarse moduli of $\overline{T}_{3,\infty,\infty}$, respectively.
To be explicit, up to conjugation the corresponding exponent matrices are given by
\begin{equation*}\label{eqnexponentcubicmarginal}
L_0=\begin{pmatrix}
0 & 0\\
0 & {1\over 3}
\end{pmatrix}\,,\quad
L_1=\begin{pmatrix}
0 & 1\\
0 & 0
\end{pmatrix}\,,\quad
L_\infty=\begin{pmatrix}
{1\over 3} & 1\\
0 & {1\over 3}
\end{pmatrix}\,.
\end{equation*}
Concretely, this canonical extension is obtained by choosing suitable local solutions as the
trivialization.
See
\cite{Saber2014:vectorvalued, Candelori:2017vector, Candelori:2019vector} for details.

 \paragraph*{The $\bold{m}=(1,0,0)$ sector.}
 By the $\mathfrak{S}_{3}$-symmetry in Lemma \ref{lemPicardFuchs}, among the $3$ sectors in $H_{\lambda,\beta}, \lambda=\exp(-{4\pi i \over 3}),\beta={1/3}$,
 we only need to consider  $\bold{m}=(1,0,0)$.
For this sector (and any of its permutations), the differential operator is
$$\theta_{b}
-b (\theta_{b}+{1\over 3}) \,.$$
The solution space is spanned by the multi-valued function $(1-b)^{-{1\over 3}}$ on $\mathcal{M}$.
Under the Riemann-Hilbert correspondence, the differential equation gives a local system
\begin{equation*}
D_{\bold{m}}\cong C_{-{2\over 3}}\,.
\end{equation*}
This is an orbifold line bundle on
the orbifold $T_{1,3,3}$ with signature
$1,3,3$ at $b=0,1,\infty$ respectively.
The canonical extension
defines the following orbifold line bundle on
\begin{equation*}
   \overline{D}_{ \bold{m}}
\cong \mathcal{O}_{T_{1,3,3}}(p_{1})\otimes\mathcal{O}_{T_{1,3,3}}(p_{\infty})^{\otimes -1} \,.
\end{equation*}

 \paragraph*{ The $\bold{m}=(1,1,0)$ sector.}
 For $\bold{m}=(1,1,0)$ and permutations, the differential operator is
$$\theta_{b}
-b (\theta_{b}+{2\over 3}) \,.$$
The solution space is spanned by
$(1-b)^{-{2\over 3}}\,.$
The same reasonings as in the case $\bold{m}=(1,0,0)$ apply to this case.

\paragraph{Schwarzian uniformization and elliptic modular forms}

For the $D_{\bold{0}}$ sector,
applying a twist by $b^{-{1\over 3}}$ to
\eqref{eqncubicmarginalbeforetwist} leads to
the following differential operator
\begin{equation}\label{eqncubicmarginalaftertwist}
\mathcal{D}_{\mathrm{elliptic}}:=
b^{1\over 3}
\circ \mathcal{D}_{\mathrm{cubic}}
\circ b^{-{1\over 3}}=
(\theta_{b}-{2\over 3})(\theta_{b}-{1\over 3})
-b \theta_{b}^2\,,
\end{equation}
with unipotent instead of quasi-unipotent monodromies at $b=1,\infty$.
This differential operator coincides with the familiar Picard-Fuchs operator for the Hesse pencil as it should be the case by the
isomorphism of pure Hodge structures provided by the Poincar\'e residue map \eqref{eqnLGCY}.
Unlike the one in \eqref{eqnmonodromycubicmarginal},
the monodromy group is now a subgroup of $\mathrm{SL}_{2}(\mathbb{Z})$.
The
canonical extension is determined by the exponent matrices
\begin{equation*}
L_0=\begin{pmatrix}
{1\over 3} & 0\\
0 & {2\over 3}
\end{pmatrix}\,,\quad
L_1=\begin{pmatrix}
0 & 0\\
-3 & 0
\end{pmatrix}\,,\quad
L_\infty=\begin{pmatrix}
0& 1\\
0 & 0
\end{pmatrix}\,.
\end{equation*}

Using the connection between solutions to \eqref{eqncubicmarginalaftertwist} and modular forms \cite{Maier:2009}, one then
identifies the base $\mathcal{M}$
with  the modular curve
\begin{equation*}
 \mathcal{M}=\Gamma_{0}(3)\backslash \mathcal{H}^{*}-\{0,1,\infty\}
\subseteq T_{3,\infty,\infty}\cong \Gamma_{0}(3)\backslash \mathcal{H}= \Gamma_{0}(3)\backslash \mathcal{H}^{*}-\{1,\infty\}\,,
\quad \Gamma_{0}(3)\cong \Gamma_{3,\infty,\infty}\,.
\end{equation*}
Note that the twist does not affect the projectivized monodromy group.
Under the above identification, the Schwarzian for \eqref{eqncubicmarginalbeforetwist}
gets identified with the modular parameter $\tau$ for the elliptic curve family, and the
Hauptmodul $b$
is a modular function for the modular group $\Gamma_{0}(3)$.

First order differential equations for the other sectors $D_{\bold{m}},\beta(\bold{m})<1$,
which have semi-simple monodromies, also  correspond to representations of the modular group $ \Gamma_{0}(3)\cong \Gamma_{3,\infty,\infty}$.
 This is also the case for differential equation satisfied by the twist $b^{-{1\over 3}}$ itself, which was used to pass from \eqref{eqncubicmarginalbeforetwist}
to
\eqref{eqncubicmarginalaftertwist} for the  $\bold{m}=0$ sector.

Therefore,
when expressed in terms of the Schwarzian for the sector $D_{\bold{0}}$,
all of the twisted sectors are components of (vector-valued) elliptic modular  forms for the group $\Gamma_{0}(3)$ which arises from the Schwarzian
uniformization for the sector $D_{\bold{0}}$.
Similar statement also holds for sections of the extended bundle $F^{0}\overline{D}_{\bold{0}}, F^{1}\overline{D}_{\bold{0}}$.
In fact, the automorphic form $a=3b^{1\over 3}$ is an elliptic modular function for the principal congruence subgroup $\Gamma(3)$, while
 those of $F^{1}\overline{D}_{\bold{0}}\cong \mathcal{O}_{\overline{T}_{3,\infty,\infty}}(p_{0}) $ are weight-one modular forms
(with multiplier system given by the Dirichlet character $\chi_{-3}$).
 See \cite{Maier:2009, Maier:2011} for details.

\begin{rem}\label{remquasimodularforms}

One can alternatively regard sections of $F^{0}D_{\bold{0}}$ as quasi-modular forms.
To be more precise, one takes a frame for $F^{1}D_{\bold{0}}$ from which
one sees that the $j$-automorphy factor is $(c\tau+d)^{-1}$, as has been explained
 in \eqref{eqnjautomorphycubic} by using the Schwarzian uniformization.
Then one applies the covariant derivative $\nabla_{\partial_b}$
to obtain a frame of $F^{0}D_{\bold{0}}$.
The automorphy factor can then be read off from the resulting frame.
Coordinate expressions of sections for $F^{0}D_{\bold{0}}$ with respect to this frame are then
analogues of quasi-modular forms \cite{Kaneko:1995}, see e.g., \cite{Urban:2014nearly, Ruan:2020genus} for related discussions.
The same discussions apply to any rank-$2$ local system
arising from ordinary differential equations with regular singularities.
\end{rem}

\begin{rem}
While the geometric sections $s[\bold{z}^{\bold{m}}\Omega]$ are components of automorphic forms, interestingly
the coordinate functions $z_{0},z_{1},z_{2}$ themselves can be related to automorphic functions.
In fact, the Hesse pencil
$$z_{0}^{3}+z_{1}^{3}+z_{2}^{3}-a z_0 z_1 z_2=0\,$$
can be uniformized \cite{Dolgachev:modularform} by Jacobi theta functions for the modular group $\Gamma(3)$.
From the moduli point of view, the Hesse pencil is the universal family of elliptic curves equipped with level-$3$ structure (a frame for the group of $3$-torsion points) over the
modular curve $\Gamma(3)\backslash \mathcal{H}$.
The objects $z_i,i=0,1,2$ are sections of the pushforward of certain theta line bundles of degree $3$ over the universal curve,
while the Poincar\'e residue of
$s[\Omega]$ is a section of the Hodge line bundle attached to this elliptic curve family.
\end{rem}

\paragraph{Real structure}
The real structure on $H$ in the present case exchanges
the two types of summands in $\mathrm{gr}^{W}_{2}H$ with distinct values of $\beta$, and also
those in $\mathrm{gr}^{W}_{3}H$.
As explained earlier, the real (in fact integral) structure can be identified
by making use of the oscillating integrals.
The discussions for the $\mathrm{gr}^{W}_{3}H$ piece
is identical to those for the Hesse pencil and is therefore omitted here.
For the
 $\mathrm{gr}^{W}_{2}H$ piece, similar to
 the computations in
 \eqref{eqnoscilatingintegrals},
 \eqref{eqnoscilatingintegralsintermsofsumoverdelta},
  \eqref{eqnoscilatingintegralintermsofhypergeometric} in
Section
\ref{secintegralstructure}, we have for $\bold{m}=(1,0,0), (0,1,1)$ and thus their $\mathfrak{S}_{3}$-permutations
\begin{eqnarray*}
\int_{\gamma_{\bold{h}}}e^{-f_{a}}\bold{z}^{\bold{m}}\Omega
=\prod_{i=0}^{2}(1-\xi_{3}^{h_{i}(m_{i}+1)})\cdot (2\pi )^{2\over 2}3^{-{1\over 2}-3} {\prod_{i=0}^{2} \Gamma({m_{i}+1\over 3})
 \over \prod_{i=0}^{2} \Gamma({i+1\over 3})} \cdot
 \,_{3}G_{3}({m_0+1\over 3}, {m_1+1\over 3}, {m_2+1\over 3};{1\over 3},{2\over 3},{3\over 3}; b)
 \,.
\end{eqnarray*}
Denote the linear dual of $\gamma_{\bold{h}}$ by $\check{\gamma}_{\bold{h}}$, we then have
$${1\over c_{\bold{m}}}\cdot [\bold{z}^{\bold{m}}\Omega]:= { (2\pi )^{-1}3^{{1\over 2}+3} { \prod_{i=0}^{2} \Gamma({i+1\over 3})\over \prod_{i=0}^{2} \Gamma({m_{i}+1\over 3})}
\over _{3}G_{3}({m_0+1\over 3}, {m_1+1\over 3}, {m_2+1\over 3};{1\over 3},{2\over 3},{3\over 3}; b) }\cdot [\bold{z}^{\bold{m}}\Omega]
=
\sum_{\bold{h}\in \{0,1,2\}^{\oplus 3}}\prod_{i=0}^{2}(1-\xi_{3}^{h_{i}(m_{i}+1)}) \check{\gamma}_{\bold{h}} \,.$$
Introduce the Dirichlet character $\chi_{-3}$ that takes
the value $0,1,-1$ on $0,1,2$ modulo $3$.
Then it is easy to check that
$$(1-\xi_{3}^{h_{i}(m_{i}+1)})=\xi_{12}^{-\chi_{-3}(h_{i}(m_{i}+1))}$$
and hence
$$\prod_{i=0}^{2}(1-\xi_{3}^{h_{i}(m_{i}+1)})=\xi_{12}^{-\chi_{-3}(\bold{h})\cdot \chi_{-3}(\bold{m+1}) }\,,$$
where $\chi_{-3}(\bold{m+1})$ stands for the vector with components
$\chi_{-3}(m_0+1),\chi_{-3}(m_1+1),\chi_{-3}(m_2+1)$
and the notation $\cdot$ above stands for inner product between two vectors.
The property of the Dirichlet character $\chi_{-3}$ tells that
$$ \chi_{-3}(\bold{m+1})=- \chi_{-3}(\bold{1-m+1})\,,\quad \forall\, \bold{m}\in \{0,1\}^{\oplus 3}\,.$$
This immediately tells that
$$c_{\bold{1-m}}^{-1}\cdot [\bold{z}^{\bold{1-m}}\Omega]=\overline{ c_{\bold{m}}^{-1}\cdot [\bold{z}^{\bold{m}}\Omega]}\in H\otimes \mathbb{C}\,.$$
In particular, the real cohomomology classes are spanned over $\mathbb{R}$
by
$$\mu c_{\bold{m}}^{-1}\cdot [\bold{z}^{\bold{m}}\Omega]+\bar{\mu} c_{\bold{1-m}}^{-1}\cdot [\bold{z}^{\bold{1-m}}\Omega]
=\sum_{\bold{h}\in \{1,2\}^{\oplus 3}}( \mu\xi_{12}^{-\chi_{-3}(\bold{h})\cdot \chi_{-3}(\bold{m+1})}+\bar{\mu}
\xi_{12}^{\chi_{-3}(\bold{h})\cdot \chi_{-3}(\bold{m+1})}
)\check{\gamma}_{\bold{h}}\,,$$
where $\mu\in \mathbb{C},\bold{m}\in \{0,1\}^{\oplus 3}$.
Integral classes can then be obtained by suitably choosing $\mu$.

\subsubsection{Fermat quartic}

For the $n=3$ case, the spectrum
is shown in Table \ref{table-spectrumquartic} below.
\begin{table}[h]
  \centering
\caption{Spectrum of the quartic  polynomial singularity $f_{a}$\,.}
  \label{table-spectrumquartic}
  \renewcommand{\arraystretch}{1.5} 
 \begin{tabular}{c|ccccccccc}
 \hline
$\beta$ & $0$ & ${1\over 4}$ & ${2\over 4}$ & ${3\over 4}$ & $1$ & ${5\over 4}$ & ${6\over 4}$ & ${7\over 4}$ & $2$\\
 \hline
$n_{\beta}$  & $1$ & $4$ & $10$ & $16$ & $19$ & $16$ & $10$ & $4$ & $1$\\
\hline
\end{tabular}
\end{table}

\paragraph{Automorphic bundles}

We will be mainly interested in the $0\leq \beta(\bold{m})< 1$ sectors: the discussions on the other sectors are similar according to \eqref{eqncommutingdifferentialoperators}.
The equations in Lemma \ref{lemPicardFuchs} are listed in Table \ref{table-differentialequationquartic}.
\begin{table}[h]
  \centering
\caption{Differential equations satisfied by the geometric sections $s[\bold{z}^{\bold{m}}\Omega]$.}
  \label{table-differentialequationquartic}
  \renewcommand{\arraystretch}{1.5} 
 \begin{tabular}{c|c}
 \hline
$\bold{m}$ & differential operator\\
 \hline
$(0,0,0,0)$ & $(\theta_{a}-2)(\theta_{a}-1)\theta_{a}-{a^4\over 4^{4}} (\theta_a+1)^3$\\
$(1,0,0,0)$ & $(\theta_{a}-1)\theta_{a}-{a^4\over 4^{4}} (\theta_a+1)^2$\\
$(2,0,0,0)$ & $(\theta_{a}-2)\theta_{a}-{a^4\over 4^{4}} (\theta_a+1)^2$\\
$(1,1,0,0)$ & $(\theta_{a}-1)\theta_{a}-{a^4\over 4^{4}} (\theta_a+1)(\theta_a+2) $\\
$(1,1,1,0)$ & $(\theta_{a}-1)\theta_{a}-{a^4\over 4^{4}} (\theta_a+2)^2 $\\
$(2,1,0,0)$ & $\theta_{a}-{a^4\over 4^{4}}(\theta_a+1) $\\
\hline
\end{tabular}
\end{table}

\paragraph*{The $D_{\bold{0}}$ sector.}
We first consider the sector with
$\bold{m}=(0,0,0,0)$ given by
\begin{equation}\label{eqnPFK3beforetwist}
\mathcal{D}_{\mathrm{quartic}}:=\theta_{b}(\theta_{b}-{1\over 4})(\theta_{b}-{2\over 4})-b(\theta_{b}+{1\over 4})^{3}\,,
\end{equation}
with the corresponding local system $D_{\bold{0}}$ over $\mathcal{M}$.
Similar to the cubic case, one can show that sections of  $D_{\bold{0}}$
correspond to automorphic forms for the triangular group
$\Gamma_{4,2,\infty}\cong \Gamma_{0}(2)^{+}<\mathrm{PSL}_{2}(\mathbb{R})$, up to a twist by $b^{-{1\over 4}}$
that changes $\mathcal{D}_{\mathrm{quartic}}$ to $\mathcal{D}_{\mathrm{K3}}:
=b^{1\over 4}\circ \mathcal{D}_{\mathrm{quartic}}\circ b^{-{1\over 4}}$.
The twist $b^{1\over 4}$ itself can be regarded as
a section of a flat bundle and hence an automorphic form for $\Gamma_{4,2,\infty}$,
by the Riemann-Hilbert correspondence.
Furthermore one can show that after a further twist, these sections are related to elliptic modular forms for $\Gamma_{0}(2)$.
In order not to interrupt the main stream of the presentation, we have left the detailed discussions to
Appendix \ref{secmarginalquartic}.

\paragraph*{Sectors with $\beta(\bold{m})=1/4$.}
Again by the $\mathfrak{S}_{4}$-symmetry in  Lemma \ref{lemPicardFuchs}, we only need to focus on  $\bold{m}=(1,0,0,0)$.
Results for the other sectors obtained by $\mathfrak{S}_{4}$-actions are the same.
From the results in Table \ref{table-differentialequationquartic}, we see that
for this sector the differential operator is
$$\theta(\theta-{1\over 4})-b(\theta+{1\over 4})^2\,,\quad \theta:=b{\partial\over \partial b}\,.$$
The corresponding bundle
$D_{\bold{m}}$
is a representation of the triangular group
$\Gamma_{4,4,\infty}$  that arises as the projectivized monodromy group.
Furthermore, sections of
$F^{2}D_{\bold{m}}$ and $F^{1}D_{\bold{m}}$ are automorphic forms for this triangular group.
In fact, similar to any other rank-$2$ local system arising from a Fuchsian equation,
from the Schwarzian uniformization one can directly read off the automorphy factors for
these bundles  as explained in Remark
\ref{remquasimodularforms}.

\paragraph*{Other twisted sectors with $\beta(\bold{m})<1$.}
The same reasoning  gives the triangular groups (essentially determined by the exponent differences)
for all the other twisted sectors with $\beta(\bold{m})<1$. Results from straightforward computations
 complete
 Table \ref{table-differentialequationquartic}
to
 Table   \ref{table-monodromiesgroupquartic}
below.

\begin{rem}

To demonstrate the analogy between automorphic forms in $D_{\bold{m}}$ and quasi-modular forms, consider the $\bold{m}=\bold{0}$ case.
According to the computations in Appendix \ref{secmarginalquartic}, the automorphy factor for
$F^{3}D_{\bold{0}}$ is $(c\tau+d)^{-2}$, with $\tau$ being the Schwarzian given in \eqref{eqnSchwarzianquarticCY}.
Then the automorphy factors for $F^{2}D_{\bold{0}}$ and $F^{1}D_{\bold{0}}=F^{0}D_{\bold{0}}$ are given by
\begin{equation*}
\begin{pmatrix}
(c\tau+d)^{-2} &  -2c(c\tau+d)^{-1} &\\
0 & 1 & &\\
\end{pmatrix}\,,\quad
\begin{pmatrix}
(c\tau+d)^{-2} &  -2c(c\tau+d)^{-1} & 2c^2(c\tau+d)^{-2}\\
0 & 1 & -2c(c\tau+d) &\\
0 & 0 & (c\tau+d)^{2}
\end{pmatrix}\,,
\end{equation*}
respectively. Coordinate expressions for the corresponding automorphic form
then satisfy transformation laws similar to those of quasi-modular forms.
\end{rem}

\begin{rem}
The triangular groups $\Gamma_{\bold{\ell}(\bold{m})}$ are regarded as subgroups of $\mathrm{PSL}_{2}(\mathbb{R})$ arising from the projectivized monodromy groups.
However, the presentations of  the monodromies (hypergeometric group in the terminology of  \cite{Beukers:1989} using Levelt's theorem) a priori are only matrices in $\mathrm{GL}_{2}(\mathbb{C})$.
Basing on the results in \cite{Beukers:1989} on the monodromy groups of hypergeometric equations,
up to conjugacy
they are actually elements in $\mathrm{SL}_{2}(\mathbb{R})\times U_{1}(\mathbb{R})$.
These monodromy matrices are explicitly worked out and also included in Table  \ref{table-monodromiesgroupquartic}.
\begin{table}[h]
  \centering
\caption{Triangular groups and monodromy representations corresponding to cohomology sectors.
Here $\xi_{k}=\exp(2\pi i/k)$, and $T_{1}$ is determined from the relation $T_{\infty}T_{1}T_{0}=\mathbb{1}$.
The symmetric square of the monodromy representation of the row labelled by $\bold{m}=-$  is isomorphic to the one for the $\bold{m}=\bold{0}$ case (see Appendix \ref{secmarginalquartic}).
The last row with $\bold{m}=\star$ shows an auxiliary differential equation that does nor arise from any sector but has $\Gamma_{4,\infty,\infty}$ as its projectivized monodromy group.}
  \label{table-monodromiesgroupquartic}
  \renewcommand{\arraystretch}{1.2} 
 \begin{tabular}{c|c|c|c}
 \hline
$\bold{m}$ & differential equation &  group& presentations of $ T_{0},T_{\infty}$\\
 \hline
$(0,0,0,0)$
 & $\theta(\theta-{1\over 4})(\theta-{2\over 4})-b(\theta+{1\over 4})^{3}$
& $\Gamma_{4,2,\infty}$ &
$
\xi_{4}^{-1}
\begin{pmatrix}
0 &0 & -1\\
0 & 1 &4\\
-1 & 1 & -2
\end{pmatrix}\,
\,,
\xi_{4}
\begin{pmatrix}
1 & -1 & 2\\
0 & 1 & -4\\
0& 0 &1
\end{pmatrix}
$
\\
$(1,0,0,0)$
& $\theta(\theta-{1\over 4})-b(\theta+{1\over 4})^2$
&  $\Gamma_{4,4,\infty}$
&
$
\xi_{8}
{1\over \sqrt{2}}
\begin{pmatrix}
-1 & -5\\
1 &3
\end{pmatrix}
\,,
\xi_{4}
\begin{pmatrix}
1 & -4\\
0 &1
\end{pmatrix}
$
\\
$(2,0,0,0)$
& $\theta(\theta-{2\over 4})-b(\theta+{1\over 4})^2$
&$\Gamma_{2,\infty,\infty}$
&
$
\xi_{4}^{-1}
\begin{pmatrix}
0 & -1\\
1 &0
\end{pmatrix}
\,,
\xi_{4}
\begin{pmatrix}
0 & -1\\
1 &2
\end{pmatrix}
$
\\
$(1,1,0,0)$
& $\theta(\theta-{1\over 4})-b(\theta+{1\over 4})(\theta+{2\over 4})$
& $\Gamma_{4,\infty,4}$&
$
\xi_{8}^{-3}
\begin{pmatrix}
-\sqrt{2} & -1\\
1 &0
\end{pmatrix}
\,,
\xi_{8}^3
\begin{pmatrix}
0 & 1\\
-1 &\sqrt{2}
\end{pmatrix}
$
\\
$(1,1,1,0)$
&  $\theta(\theta-{1\over 4})-b(\theta+{2\over 4})^2$
& $\Gamma_{4,4,\infty}$&
$
\xi_{8}
{1\over \sqrt{2}}
\begin{pmatrix}
-1 & 1\\
-5 &3
\end{pmatrix}
\,,
\begin{pmatrix}
1 & -2\\
2 &-3
\end{pmatrix}
$
\\
$(2,1,0,0)$
& $\theta-b(\theta+{1\over 4}) $
& $\Gamma_{1,4,4}$ & $1\,,\quad \xi_{4} $\\
\hline
$-$
&  $\theta(\theta-{1\over 4})-b(\theta+{1\over 8})^2$
& $\Gamma_{4,2,\infty}$&
$
\xi_{8}^{-1}
\begin{pmatrix}
{\sqrt{2}-1\over 2} & -{3\over 2}\\
{1\over 2} & {\sqrt{2}+1\over 2}
\end{pmatrix}
\,,
\xi_{8}
\begin{pmatrix}
{3\over 2} & -{\sqrt{2}+1\over 2}\\
 {\sqrt{2}-1\over 2} & {1\over 2}
\end{pmatrix}
$
\\
$\star$
&  $\theta(\theta-{3\over 4})-b(\theta+{1\over 8})^2$
& $\Gamma_{4,\infty,\infty}$&
$
\xi_{8}^{-1}
\begin{pmatrix}
\sqrt{2} & -1\\
1 &0
\end{pmatrix}
\,,
\xi_{8}
\begin{pmatrix}
0 & 1\\
-1 &2
\end{pmatrix}
$
\\
\hline
\end{tabular}
\end{table}
The $U_{1}(\mathbb{R})$ factors can be cancelled by a suitable twist which can be multi-valued functions on the quotient $\Gamma_{\bold{\ell}(\bold{m})}\backslash\mathcal{H}$.
The twist procedure also kills the semi-simple part of the quasi-unipotent monodromies.
For example, for the $\bold{m}=(1,0,0,0)$ case, a twist by $b^{-{1\over 8}}(1-b)^{3\over 8}$ changes the differential operator $\theta(\theta-{1\over 4})-b(\theta+{1\over 4})^2$ to
$$b^{-{1\over 8}}(1-b)^{3\over 8}\circ \left(\theta(\theta-{1\over 4})-b(\theta+{1\over 4})^2\right)\circ (b^{-{1\over 8}}(1-b)^{3\over 8})^{-1}\,.$$
The latter is a differential operator whose Riemann scheme  is given by Table   \ref{table-orbifoldstructurecubicweight2degree1/4}.
\begin{table}[h]
  \centering
\caption{Riemann scheme for the differential equation
for the $\bold{m}=(1,0,0,0)$ sector after the twist.}
  \label{table-orbifoldstructurecubicweight2degree1/4}
  \renewcommand{\arraystretch}{1.5} 
 \begin{tabular}{c|ccc}
 \hline
$b$ & $0$ & $1$ & $\infty$ \\
 \hline
local exponents   & $-{1\over 8}$ & ${3\over 8}$ & $0$ \\
   & ${1\over 8}$ & ${5\over 8}$ & $0$ \\
\hline
\end{tabular}
\end{table}
For the $\bold{m}=(2,0,0,0)$ case, applying the twist $b^{-{1\over 4}}$, one arrives at the familiar presentation of
 $\Gamma_{2,\infty,\infty}\cong \Gamma_{0}(2)<\mathrm{PSL}_{2}(\mathbb{Z})$ in terms of matrices in $\mathrm{SL}_{2}(\mathbb{Z})$.

\end{rem}

\paragraph{Unifying triangular groups}

With the triangular groups $\Gamma_{\bold{\ell}(\bold{m})}$
 determined from the sectors $D_{\bold{m}}$,  the desired quotient homomorphism
$\rho_{\bold{\ell}(\bold{m})}: \Gamma_{4, \infty, \infty}\rightarrow
\Gamma_{\bold{\ell}(\bold{m})} $
in the proof Theorem \ref{thmautomorphicform}
is in no way unique.
We can however construct, once and for all, natural ones by using the fact that they are
orbifold fundamental groups.
Such homomorphisms can be chosen to respect the semi-simple parts of their monodromy representations.
Concretely, to obtain a homomorphism
one maps the generators of
$\Gamma_{4,\infty,\infty}$ to suitable powers of those of $ \Gamma_{\ell_0, \ell_1, \ell_\infty}$.
The powers are dictated by the semi-simple parts of the monodromies.
For example,
using the presentation in \eqref{eqnpresentationtriangulargroup}, a homomorphism
$\Gamma_{4, \infty, \infty}\rightarrow\Gamma_{2, \infty, \infty}$
can be given by $\sigma\mapsto \sigma',\tau\mapsto \tau'$, where
$\sigma,\tau$ are the generators corresponding to the loops at the orbifold point $p_0$ and the cusp
$p_{\infty}$, and similarly
$\sigma', \tau'$ the corresponding generators for $\Gamma_{2, \infty, \infty}$.
Note that this homomorphism  does not descend from a homomorphism on $\mathrm{PSL}_{2}(\mathbb{R})$
as the monodromy matrices corresponding to $\sigma,\tau$
do not generate the full group $ \mathrm{PSL}_{2}(\mathbb{R})$ but only a subgroup.

\subsubsection{Fermat quintic}
\label{secquinticexample}

The discussions for the $n=4$ case are analogous to the previous cases, except for
the lack of connection to elliptic modular forms and the explicit identification of automorphy
factors for $F^{\mathrm{top}}D_{\bold{m}}$.
Here we again only focus on the $D_{\bold{0}}$ sector which exhibit particularly interesting properties.

As explained before, via the Poincar\'e residue map \eqref{eqnLGCY} the $D_{\bold{0}}$ sector coincides with the variation of Hodge structure for the
quintic family $X\rightarrow \mathcal{M}=\mathbb{P}^{1}-\{0,1,\infty\}$ given by
\begin{equation}
\sum_{i=0}^{4} z_{i}^{5}-  5 z^{-{1\over 5}}  \prod_{i=0}^{4} z_{i}=0\,.
\end{equation}
The Picard-Fuchs operator is now, up to a twist,
\begin{equation}\label{eqnPFequationqunitic}
\mathcal{D}_{\mathrm{quintic}}=\theta^{4}-z\prod_{k=1}^{4} (\theta+{k\over 5})\,,
\quad \theta:={z}{\partial\over \partial z}\,.
\end{equation}
By examining the indicial differences, the flat vector bundle $D_{\bold{0}}$
corresponds to a representation
of $\Gamma_{\infty,\infty,5}=\pi_{1}^{\mathrm{orb}}(T_{\infty,\infty,5})$.
In particular, sections of the canonical extension $\overline{D}_{\bold{0}}$ are automorphic forms for the triangular group $\Gamma_{\infty,\infty,5}$.
\begin{rem}
A crucial difference from the previous cases is that now the automorphy factor for
$F^{3}D_{\bold{0}}$
is not given by a $j$-automorphy factor \eqref{eqnjautomorphy}, this is partially due to the lack of a direct relation between the representation
$D_{\bold{0}}$ and that for the rank-$2$ fundamental representation of $\Gamma_{\infty,\infty,5}$.
By calculating the monodromy representation of $D_{\bold{0}}$ one can see that
it does not admit a symmetric cube structure, unlike the Fermat quartic case discussed in Appendix \ref{secmarginalquartic}.
This is ultimately related to the nontriviality of genus zero GW invariants for its mirror, see
\cite{Zhou:2016mirror} for extended discussions on this.
\end{rem}

We now study the differential structure of the periods, namely solutions to the Picard-Fuchs equation \eqref{eqnPFequationqunitic}, as promised in Section \ref{secRHcorrespondence}.
For concreteness, we take a basis of solutions near the maximally unipotent monodromy point $z=0$ and denote them by $I_{0},I_{1},I_{2},I_{3}$, respectively.
Such a basis can be obtained from the Frobenius method for instance.
\begin{dfn}\label{dfnYamaguchiYauring}
Denote
\begin{equation}
T=I_{1,0}={I_{1}\over I_{0}}\,,\quad
 I_{2,0}={I_{2}\over I_{0}}\,,\quad
  I_{3,0}={I_{3}\over I_{0}} \,.
\end{equation}
The Yamaguchi-Yau ring
\cite{Yamaguchi:2004bt} (see also \cite{Bershadsky:1993cx}) is given by (here $'=\partial_z$)
\begin{equation}\label{eqnhollimitYYgenerators}
\mathcal{F}_{\mathrm{YY}}:=\mathbb{C}(z)[
{I_{0}'\over I_{0}},
{I_{0}''\over I_{0}},
{I_{0}'''\over I_{0}}
,
{T''\over T'}]\,.
\end{equation}
\end{dfn}
By using the special geometry relation \cite{Strominger:1990, Freed:1999} satisfied by the Picard-Fuchs equation \eqref{eqnPFequationqunitic} above,
we have the following relations
\cite{Ceresole:1992su, Bershadsky:1993cx, Ceresole:1993qq, Lian:1994zv, Yamaguchi:2004bt}
 (see also \cite{Almkvist:2004differential, Alim:2007qj, Grimm:2007tm, Hosono:2008ve,  Zagier:2008})
\begin{eqnarray}\label{eqnrelationsforYY}
I_{3,0}'&=&-2I_{2,0}+I_{1,0} I_{2,0}\,,\nonumber\\
I_{2,0}''&= &I_{2,0}' {I_{1,0}''\over I_{1,0}'}+ {1\over I_{0}^2 I_{1,0}'}C\,,\quad C:={5\over 1-5^5 z}\,,\nonumber\\
T'''
&=&
T''(-2  {I_{0}'\over I_{0}}
+ C)
+ T' \left(   -4{ I_{0}''\over I_{0}}  +2 ({I_{0}'\over I_{0}})^2  - C'+2 {I_{0}'\over I_{0}} C+C^2 +{7\over 5}C\right)
\,.
\end{eqnarray}
See Appendix B of the arXiv version of \cite{Chang:2020} and references therein for the geometric explanation for each of the relations above.
In particular, the first relation follows from from
the fact that the monodromy lies in $\mathrm{SL}_{4}(\mathbb{C})$, while the last one from the flatness of the Gauss-Manin connection.

\begin{prop}\label{propYYdifferentialring}
The Yamaguchi-Yau ring $\mathcal{F}_{\mathrm{YY}}$ is a differential ring
under $\partial_z$, with generators being components of automorphic forms for $\Gamma_{\infty,\infty,5}$.
\end{prop}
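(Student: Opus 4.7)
The plan is to split the proof into two parts: closure of $\mathcal{F}_{\mathrm{YY}}$ under the derivation $\partial_z$, and identification of each of the four generators as a component of an automorphic form for $\Gamma_{\infty,\infty,5}$. For closure I compute $\partial_z$ of each generator directly, invoking the Picard-Fuchs equation for one case and the relations \eqref{eqnrelationsforYY} for another; for automorphicity I combine Theorem \ref{thmautomorphicform} with the Gauss-Manin / Hodge filtration picture summarized in Remark \ref{remquasimodularforms}.

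The first two closure computations are immediate:
\[
\partial_z (I_0'/I_0) = I_0''/I_0 - (I_0'/I_0)^2, \qquad \partial_z (I_0''/I_0) = I_0'''/I_0 - (I_0'/I_0)(I_0''/I_0).
\]
For $\partial_z(I_0'''/I_0)$ I expand the Picard-Fuchs operator \eqref{eqnPFequationqunitic} in powers of $\partial_z$ using $\theta = z\partial_z$; since the leading coefficient in $\partial_z$ is $z^4(1-z) \in \mathbb{C}(z)^\times$ on $\mathcal{M}$, the identity $\mathcal{D}_{\mathrm{quintic}} I_0 = 0$ rearranges to $I_0^{(4)} = \sum_{k=0}^{3} r_k(z)\, I_0^{(k)}$ with $r_k \in \mathbb{C}(z)$, and dividing by $I_0$ shows both $I_0^{(4)}/I_0$ and $\partial_z(I_0'''/I_0) = I_0^{(4)}/I_0 - (I_0'/I_0)(I_0'''/I_0)$ lie in $\mathcal{F}_{\mathrm{YY}}$. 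For the fourth generator, the third line of \eqref{eqnrelationsforYY} already writes $T'''/T'$ as an explicit polynomial in $T''/T'$, $I_0'/I_0$, $I_0''/I_0$ with coefficients in $\mathbb{C}(z)$ (via $C$ and $C'$), which together with $\partial_z(T''/T') = T'''/T' - (T''/T')^2$ closes the ring.

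For automorphicity, Theorem \ref{thmautomorphicform} combined with the Poincar\'e residue identification \eqref{eqnLGCY} shows that $D_{\bold{0}}$ corresponds to a monodromy representation $\rho \colon \Gamma_{\infty,\infty,5} \to \mathrm{GL}(\mathbb{V})$ on the four-dimensional solution space of \eqref{eqnPFequationqunitic}. Thus the vector $(I_0, I_1, I_2, I_3)^T$, pulled back along the Schwarzian map $\mathcal{H} \to \mathcal{M}$, is a single-valued $\mathbb{V}$-valued automorphic form for $\Gamma_{\infty,\infty,5}$ in the sense of Definition \ref{defautomorphicbundle}, so each $I_j$ is a component of such a form. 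Because $z$ descends to a $\Gamma$-invariant function on $\mathcal{H}$, $\partial_z$ commutes with the monodromy action, so each derivative $I_0^{(k)}$ is again a component of an automorphic form for the same $\rho$. The Yamaguchi-Yau generators are rational expressions in these components, and by the mechanism of Remark \ref{remquasimodularforms} they are coordinate expressions of sections of automorphic bundles over $\overline{T}_{\infty,\infty,5}$ built from $\overline{D}_{\bold{0}}$ via its Hodge filtration and Gauss-Manin connection.

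The main obstacle is to make "component of an automorphic form" precise for the logarithmic-derivative objects $I_0'/I_0$ and $T''/T'$, which are not $\rho$-equivariant but transform inhomogeneously, in direct analogy with the quasi-modular Eisenstein series $E_2$. The cleanest resolution is to choose a frame of $\overline{D}_{\bold{0}}$ adapted to the Hodge filtration and compute the Gauss-Manin connection matrix in that frame, mimicking the rank-two recipe of Remark \ref{remquasimodularforms}; the matrix entries are, up to normalization by the top Hodge component $I_0$, precisely the Yamaguchi-Yau generators, and their transformation laws under $\Gamma_{\infty,\infty,5}$ give the automorphy factors of the corresponding Hodge sub-bundles of $\overline{D}_{\bold{0}}$. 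Unlike the cubic and quartic cases, $\rho$ is not a symmetric power of a rank-two representation, so these automorphy factors have a genuinely rank-four shape and must be read off from $\rho$ itself rather than from a $j$-automorphy factor.
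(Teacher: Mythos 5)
Your proof is correct and follows essentially the same route as the paper: the paper likewise derives closure under $\partial_z$ from the Picard--Fuchs equation satisfied by $I_0$ together with the last relation in \eqref{eqnrelationsforYY} (citing Yamaguchi--Yau for the computation you carry out explicitly), and establishes automorphicity via Theorem \ref{thmautomorphicform}, the residue map \eqref{eqnLGCY}, and the observation that the periods are coordinate expressions of cohomology classes with respect to a flat frame of the local system over $\mathcal{M}$. Your additional discussion of the quasi-modular, inhomogeneous transformation of the logarithmic-derivative generators makes explicit a point the paper leaves implicit (deferring to Remark \ref{remquasimodularforms}), but it does not change the argument.
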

\begin{proof}
The proof of the first part is contained in \cite{Yamaguchi:2004bt}. It follows from the Picard-Fuchs equation satisfied by $I_0$ and
the last relation in \eqref{eqnrelationsforYY}.
The second part follows from Theorem \ref{thmautomorphicform}, the Poincar\'e residue map \eqref{eqnLGCY},
and the fact that period integrals of $Q_{a}$ are coordinate expressions of  cohomology classes in $H^{3}(Q_{a})$ with respect to the locally constant
frame of the corresponding local system over $\mathcal{M}$.
\end{proof}

The differential ring structure of  $\mathcal{F}_{\mathrm{YY}}$ has made its frequent appearance  in the study of higher genus mirror symmetry for the quintic and related geometries.
See \cite{Bershadsky:1993cx, Yamaguchi:2004bt, Alim:2007qj, Grimm:2007tm, Hosono:2008ve,  Zagier:2008} and the more recent works
\cite{Alim:2013eja, Lho:2018holomorphic, Chang:2018polynomial, Guo:2018structure}
for details.

We now prove the following result regarding the algebraic independence over $\mathbb{C}(z)$ of the generators in the Yamaguchi-Yau ring.
This property
 is of crucial importance in studying higher genus GW theory, such as solving holomorphic anomaly equations \cite{Alim:2013eja, Zhou:2014thesis, Lho:2018holomorphic}.
\begin{thm}\label{thmalgebraicindependence}
One has
\begin{equation}\label{eqnalgebraicindependenceYYgenerators}
\mathrm{trdeg}_{\mathbb{C}(z)} \,
\mathbb{C}(z)
\left({I_{0}'\over I_{0}},
{I_{0}''\over I_{0}},
{I_{0}'''\over I_{0}}
,
{T''\over T'}
\right)=4\,.
\end{equation}
In particular, the Yamaguchi-Yau generators are algebraically independent over $\mathbb{C}(z)$.
\end{thm}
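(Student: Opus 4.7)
The plan is to apply differential Galois theory to the Picard--Fuchs operator $\mathcal{D}_{\mathrm{quintic}}$. The key input is that the differential Galois group $G$ of $\mathcal{D}_{\mathrm{quintic}}$ over $\mathbb{C}(z)$ equals $\mathrm{Sp}_4(\mathbb{C})$: this follows from the Beukers--Heckman classification of hypergeometric monodromy groups (cf.\ \cite{Beukers:1989}) combined with the standard identification of the differential Galois group of a Fuchsian equation with the Zariski closure of its monodromy representation. Consequently the Picard--Vessiot field $L$ satisfies $\mathrm{trdeg}_{\mathbb{C}(z)} L = \dim G = 10$. Using the logarithmic-derivative identities $K_2 = K_1' + K_1^2$ and $K_3 = K_1'' + 3 K_1 K_1' + K_1^3$, together with the relations \eqref{eqnrelationsforYY}, the subfield $F := \mathbb{C}(z)(K_1, K_2, K_3, K_4)$ is closed under $\partial_z$, hence a differential subfield of $L$. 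By the differential Galois correspondence, $\mathrm{trdeg}_{\mathbb{C}(z)} F = \dim G - \dim H$, where $H = \mathrm{Gal}(L/F) \le G$, so it suffices to show $\dim H = 6$.

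Next I would identify $H$ as the stabilizer of a geometric object in the $4$-dimensional solution space $V$. Since the Galois action is $\mathbb{C}(z)$-linear and commutes with $\partial_z$, the condition $g \cdot K_1 = K_1$ is equivalent to $g \cdot I_0 \in \mathbb{C}\cdot I_0$, i.e.\ $g$ stabilizes the line $\mathbb{C} I_0 \subset V$; such $g$ automatically fix $K_2, K_3$, which are differential polynomials in $K_1$. Granted that $g$ stabilizes $\mathbb{C} I_0$, the condition $g \cdot K_4 = K_4$ integrates (via $(g \cdot T)''/(g \cdot T)' = T''/T'$) to $g \cdot T = \alpha T + \beta$ for some constants $\alpha, \beta \in \mathbb{C}$; expanding $g \cdot T = (g \cdot I_1)/(g \cdot I_0)$ in the basis $\{1, T, I_2/I_0, I_3/I_0\}$ and comparing coefficients shows this is equivalent to $g \cdot I_1 \in \mathrm{span}(I_0, I_1)$. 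Therefore $H$ is the stabilizer in $\mathrm{Sp}_4(\mathbb{C})$ of the two-step flag $\mathbb{C} I_0 \subset \mathrm{span}(I_0, I_1) \subset V$.

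Finally I would perform the dimension count. The first Hodge--Riemann bilinear relation on $H^3(Q_a)$ implies that $\mathrm{span}(I_0, I_1)$ is Lagrangian with respect to the intersection pairing; concretely, in the Frobenius basis at the MUM point one has $\omega(I_0, I_1) = 0$. Hence $H$ stabilizes an isotropic flag of type $(1,2)$, and the variety of such flags has dimension $\dim \mathrm{LG}(2,4) + \dim \mathbb{P}^1 = 3 + 1 = 4$. Thus $\dim H = 10 - 4 = 6$ and $\mathrm{trdeg}_{\mathbb{C}(z)} F = 10 - 6 = 4$, proving the algebraic independence claim.

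The hardest parts will be (i) rigorously invoking $G = \mathrm{Sp}_4(\mathbb{C})$ for the quintic Picard--Fuchs operator, which relies on Beukers--Heckman's density theorem together with the comparison between monodromy and the full differential Galois group; and (ii) verifying the Lagrangian property of $\mathrm{span}(I_0, I_1)$ in the specific Frobenius basis, which follows from Hodge--Riemann but requires careful tracking of the intersection pairing, most conveniently via Iritani's $\widehat{\Gamma}$-class description of the integral lattice on $H^3(Q_a)$. Once these inputs are in place, the remaining Lie-theoretic dimension count for the isotropic flag variety is routine.
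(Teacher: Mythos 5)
Your argument is correct, and it finishes the proof by a genuinely different route than the paper does. Both proofs share the same first (and hardest) input: the differential Galois group of $\mathcal{D}_{\mathrm{quintic}}$ over $\mathbb{C}(z)$ is $\mathrm{Sp}_{4}(\mathbb{C})$, so the Picard--Vessiot field $\mathcal{G}$ has transcendence degree $10$. From there the paper argues by a generator count: using all three relations in \eqref{eqnrelationsforYY} it cuts the obvious $16$ generators of $\mathcal{G}$ down to exactly $10$, among them the four Yamaguchi--Yau generators; since ten generators achieve transcendence degree ten, every subset of them is algebraically independent. You instead descend through the Galois correspondence, identifying $H=\mathrm{Gal}(\mathcal{G}/F)$ for $F=\mathbb{C}(z)\bigl(\tfrac{I_{0}'}{I_{0}},\tfrac{I_{0}''}{I_{0}},\tfrac{I_{0}'''}{I_{0}},\tfrac{T''}{T'}\bigr)$ as the stabilizer in $\mathrm{Sp}_{4}(\mathbb{C})$ of the flag $\mathbb{C}I_{0}\subset\mathrm{span}(I_{0},I_{1})$, i.e.\ a Borel subgroup of dimension $6$, whence $\mathrm{trdeg}_{\mathbb{C}(z)}F=10-6=4$. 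Your route needs more machinery (the Galois correspondence and the isotropy of $\mathrm{span}(I_{0},I_{1})$) but less of \eqref{eqnrelationsforYY} --- only the last relation, to check that $F$ is a differential subfield --- and it is more structural, since it pins down the Galois group of the intermediate field rather than merely embedding its generators into a larger independent set. One simplification: the Lagrangian property you single out as a hard point comes for free. Since $F$ has four generators, $\mathrm{trdeg}_{\mathbb{C}(z)}F\le 4$; if $\mathrm{span}(I_{0},I_{1})$ were non-isotropic, the stabilizer of the flag in $\mathrm{Sp}_{4}(\mathbb{C})$ would have dimension $5$ and your count would give $\mathrm{trdeg}_{\mathbb{C}(z)}F=5$, a contradiction. (It also follows directly from the single Jordan block of the monodromy at $z=0$ together with the monodromy-invariance of the symplectic pairing, with no need for the $\widehat{\Gamma}$-class description of the integral structure.) Either way the dimension count closes and the theorem follows.
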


\begin{proof}
The proof is a simple application of differential Galois theory.
Denote
$\mathcal{F}=\mathbb{C}(z)$ to be the ground differential field, with differential $\partial=\partial_{z}$.
Consider the Picard-Vessiot extension $\mathcal{G}$ for $\mathcal{D}_{\mathrm{quintic}}$ over $\mathcal{F}$.
Concretely this is the differential field over $\mathcal{F}$ obtained by adjoining differentials of the solutions to $\mathcal{D}_{\mathrm{quintic}}$.
Using the results between the monodromy group and the differential Galois group \cite{Beukers:1989},
a direct computation on the monodromy group shows that the transcendental degree satisfies
\begin{equation}
\mathrm{trdeg}_{\partial}(\mathcal{G}/\mathcal{F})=\dim \mathrm{Sp}_{4}(\mathbb{C})=10\,.
\end{equation}
Hence as ordinary fields
\begin{equation}\label{eqnupperbound}
\mathrm{trdeg} (\mathcal{G}/\mathbb{C}(z)) \geq 10\,.
\end{equation}
Since the periods satisfy the 4th order Picard-Fuchs equation \eqref{eqnPFequationqunitic}, we see that
a set of generators of $\mathcal{G}/\mathcal{F}$ can be taken to be the following $16$ elements
\begin{equation*}\label{eqngeneratorsYY}
I_{0}\,, I_{0}' \,, I_{0}''\,, I_{0}'''\,,T\,,T'\,,T''\,,T'''\,, I_{2,0}\,, I_{2,0}'\,, I_{2,0}''\,, I_{2,0}'''\,,
 I_{3,0}\,, I_{3,0}'\,, I_{3,0}''\,, I_{3,0}'''\,.
 \end{equation*}
 The relations
\eqref{eqnrelationsforYY}
tells that
\begin{equation}\label{eqnreducedPVextension}
\mathcal{G}=\mathcal{F}(I_{0}, I_{0}',I_{0}'',I_{0}''',T,T',T'', I_{2,0}, I_{2,0}',
 I_{3,0})=\mathcal{F} ( {I_{0}'\over I_{0}},{I_{0}''\over I_{0}} , {I_{0}'''\over I_{0}}, {T''\over T'},  I_{0}, T,T',  I_{2,0}, I_{2,0}',
 I_{3,0})
\,.
\end{equation}
In particular, one has
\begin{equation}\label{eqnlowerbound}
\mathrm{trdeg} (\mathcal{G}/\mathbb{C}(z)) \leq 10\,.
\end{equation}
Combining \eqref{eqnupperbound}, \eqref{eqnlowerbound}, we are led to
\begin{equation}\label{eqntranscendentaldegree}
\mathrm{trdeg} (\mathcal{G}/\mathbb{C}(z)) =10
\,.
\end{equation}
The desired assertion \eqref{eqnalgebraicindependenceYYgenerators}
now follows immediately.
\end{proof}

We remark that
a different way of finding the relations is presented in \cite{Movasati:2011}, which also leads to the
proof of \eqref{eqnlowerbound} and  in particular the algebraic independence
of the generators exhibited  in \eqref{eqnreducedPVextension}.

\section{Genus zero GW invariants of Fermat Calabi-Yau varieties}
\label{secmirrorsymmetry}

We are interested in the genus zero  Gromov-Witten invariants
corresponding to the part of the quantum cohomology generated by
classes pulled back from the ambient space. This ambient part is the part that enters genus zero mirror symmetry \cite{Candelas:1990rm}.
Specializing to our examples,
this is the identification  \cite{Givental:1996, Givental:1998, Lian:1997, Lian:1998} of  the $I$-functions, which encode
the genus zero GW invariants for the Calabi-Yau varieties,
with solutions to Picard-Fuchs equations for the mirror  \cite{Batyrev:1994hm} Calabi-Yau family given in  \eqref{eqnCYB}.
See \cite{Cox:1999} and also \cite{Gross:2003} for nice expositions.

We  now transfer results from singularity theory to results in genus zero GW theory.
\begin{thm}\label{thmgenuszeroGWautomorphicform}
Consider the genus zero Gromov-Witten invariants for the Calabi-Yau $(n-1)$-fold $M$
defined by the vanishing locus of the Fermat polynomial $\sum_{i=0}^{n}x_{i}^{n+1}=0$ in $\mathbb{P}^{n}$.
Then
the $I$-function, as a generating series of certain one-point GW invariants of $M$,
is a component of an automorphic form valued in $H^{*}(M)$ for the triangular group
$\Gamma_{n+1,\infty,\infty}$.
In particular, for the $n=2,3$ cases, the automorphic form is a vector-valued elliptic modular form (with possibly nontrivial multiplier systems) for certain congruence subgroup in
$\mathrm{PSL}_{2}(\mathbb{Z})$.
\end{thm}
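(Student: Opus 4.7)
The plan is to string together three inputs that are already in place: genus zero mirror symmetry for Calabi-Yau hypersurfaces in projective space, the Poincar\'e residue realization of $H^{n-1}_{\mathrm{primitive}}(Q_a)$ inside the vanishing cohomology of $f_a$, and Theorem \ref{thmautomorphicform} on the automorphicity of the geometric sections.

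First I would recall the Givental--Lian--Liu--Yau mirror theorem for the Fermat Calabi--Yau $M\subset\mathbb{P}^n$: the $I$-function $I_{M}(z)$, valued in $H^*(M)$, can be written as an explicit $\Gamma$-factor times a generating series whose coefficients are the basic hypergeometric series $_{n}G_{n}$ specialized to the parameters $(1/(n+1),\ldots,n/(n+1))$. By construction the components of $I_M$ with respect to the ambient basis are annihilated by the Picard--Fuchs operator of the mirror Dwork family $\{Q_a\}_{a\in S}$ at its large-complex-structure point. In view of Corollary \ref{corperiodintegrals} and the base change \eqref{eqnbasechange}, these components coincide (up to elementary prefactors and the mirror map) with period integrals of the geometric section $s[\bold{z}^{\bold{0}}\Omega]$ of the Dwork singularity $f_a$.

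Next I would invoke the identification \eqref{eqnLGCY} via the Poincar\'e residue map: the sector $D_{\bold{0}}$ of the vanishing cohomology is isomorphic, as a variation of Hodge structure over $\mathcal{M}$, to the primitive part $H^{n-1}_{\mathrm{primitive}}(Q_a)$. Under this isomorphism, the Calabi--Yau form on $Q_a$ corresponds to the class of $\zeta[\bold{z}^{\bold{0}}\Omega]$, and the flat sections of the dual local system that compute the period integrals match on both sides. Hence the components of $I_M$ are, up to the standard mirror-map change of variable and a normalization by $I_0$, components of a flat section of the automorphic bundle $D_{\bold{0}}$ (equivalently $\overline{D}_{\bold{0}}$ on the compactified orbifold). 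Applying Theorem \ref{thmautomorphicform} to $\bold{m}=\bold{0}$ then yields that $I_M$ is a component of an $H^*(M)$-valued automorphic form for $\Gamma_{n+1,\infty,\infty}$.

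For the refinement to $n=2,3$, I would specialize to the Fermat cubic and quartic analyses carried out in Section \ref{secexamples} and Appendix \ref{secmarginalquartic}. There it is shown that, after the appropriate twist by a fractional power of $b$, the Picard--Fuchs operator on $D_{\bold{0}}$ reduces to the Picard--Fuchs operator of the Hesse pencil (for $n=2$) or to the symmetric square of a rank-$2$ equation associated to an elliptic family (for $n=3$); in both cases the projectivized monodromy group is identified with $\Gamma_0(3)$, respectively $\Gamma_0(2)^+$, inside $\mathrm{PSL}_2(\mathbb{Z})$. The Schwarzian provides the uniformizer $\tau$, and the components of $I_M$ pull back to vector-valued modular forms (with the multiplier systems arising from the twist) for these congruence subgroups. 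The main conceptual obstacle is the passage from ``abstract'' automorphicity for the triangular group $\Gamma_{n+1,\infty,\infty}$ to the statement that the multiplier-twisted forms remain modular in the congruence subgroup sense; this is precisely what the explicit twist computations in Section \ref{secexamples} and Appendix \ref{secmarginalquartic} already accomplish, so the argument reduces to citing them.
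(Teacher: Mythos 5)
Your proposal is correct and follows essentially the same route as the paper: genus zero mirror symmetry identifies the $I$-function with the (period expression of the) holomorphic volume form of the Dwork family, the Poincar\'e residue map \eqref{eqnLGCY} transports the question to the $D_{\bold{0}}$ sector where Theorem \ref{thmautomorphicform} applies, and the $n=2,3$ refinement is read off from the explicit computations in Section \ref{secexamples} and Appendix \ref{secmarginalquartic}. The extra detail you supply on the hypergeometric form of the $I$-function and the mirror map is a harmless elaboration of the same argument.
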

\begin{proof}
By genus zero mirror symmetry, the $I$-function for $M$
is identical to the holomorphic volume form
for the corresponding Dwork family $\{Q_{a}\}_{a\in S}$ over $S$ in \eqref{eqnCYB}.
Invoke the fact that the Poincar\'e residue map in \eqref{eqnLGCY} respects the $D_{S}$-structures (i.e., Gauss-Manin connections),
to prove the holomorphic volume form is a component of an automorphic form it suffices to prove that for the section $\zeta[\Omega]$ in the $D_{\bold{0}}$ sector,
which in turn is equivalent to proving that for the
geometric section
$s[\Omega]$ by the quasi-homogeneity.
The last statement follows from Theorem \ref{thmautomorphicform}.

The assertions on elliptic modular forms for the $n=2,3$ cases follow from direct computations on the $D_{\bold{0}}$ sectors
shown in Section \ref{secexamples}.
\end{proof}

In an earlier work
\cite{Iritani:2021} it is shown, among other things, that all genus GW generating functions of certain quotients of Fermat CY varieties are ``abstract modular forms“ based on  Givental’s quantization formalism.	
In principle, one should be able to relate their modularity result
to the automorphicity result in our work.

We remark that our strategy in establishing automorphicity is applicable to more general invariants
(e.g., higher genus invariants that can be reconstructed from  \cite{Dubrovin:2001, Teleman:2012}
or reduced to \cite{Faber:2010} genus zero ones)
as long as the differential equations can be worked out.\\

In what follows we only discuss some quotients of quartic K3 surfaces.
The orbifold GW invariants and quantum cohomology of an orbifold are defined on the Chen-Ruan cohomology
\cite{Chen:2004, Adem:2007}
which carries sophisticated gradings.
Elements in the associated graded of the multi-graded Chen-Ruan cohomology are also called twisted sectors.
We shall expose
the roles played by twisted sectors in Chen-Ruan cohomology and twisted sectors in vanishing cohomology and study their mirror symmetry.

\subsection{$I$- and $J$-functions}

In the rest of this section, we denote by $Q$ the K3 surface defined by the vanishing locus of
$x^{4}+y^{4}+z^{4}+w^{4}=0$ in $\mathbb{P}^{3}$. We consider the
Givental $I$- and $J$-functions for
several quotients of $Q$.

\subsubsection{Quartic K3 surface}
The $I$-function $I_{Q}$ is a $H^{*}(Q)$-valued function satisfying
 the quantum differential equation
\begin{equation}\label{eqnquantumdifferentialequationquarticK3}
\left(\partial_{\mathfrak{t}}^{3}-4^{3} e^{\mathfrak{t}}\prod_{k=1}^{3}(\partial_\mathfrak{t}+{k\over 4})\right)I_{Q}(\mathfrak{t},\mathfrak{z}; P,\Lambda)=\mathfrak{z}({P\over \mathfrak{z}})^3e^{\mathfrak{t} P/\mathfrak{z}}=0\in   \mathfrak{z}\, H^{*}(Q)[[\mathfrak{z}^{-1},e^{\mathfrak{t}},\Lambda]]
\,,
\end{equation}
where $P$ is the hyperplane class of $\mathbb{P}^{3}$,
$\mathfrak{t}$ is the complexified K\"ahler parameter with $\mathrm{Re}\,\mathfrak{t}<0$,  $\mathfrak{z}$ is the descendant variable, and
$\Lambda$ is the Novikov variable which can be set to $1$ safely.
Explicitly it is given by
\begin{equation}\label{eqnIquarticK3}
I_{Q}(\mathfrak{t},\mathfrak{z}; P,\Lambda)=\mathfrak{z}e^{\mathfrak{t}P\over \mathfrak{z}} \sum_{d\in \mathbb{Z}_{\geq 0}}\Lambda^{4d}
e^{4d \mathfrak{t}}
{\prod_{\ell=1}^{4d}(4P+ \ell \mathfrak{z})
\over
\prod_{j=0}^{3}\prod_{\ell=1}^{d}(P+ \ell \mathfrak{z}) }=\mathfrak{z} F(\mathfrak{t})+  G(\mathfrak{t})+\mathcal{O}(\mathfrak{z}^{-1})
\,.
\end{equation}

The $\mathfrak{z}^{-1}$-expansion of $I_{Q}$ is the same as the one obtained by using the Frobenius method in solving differential equations. In particular, setting $\Lambda=1$ one has
\begin{equation}\label{eqnFquarticK3}
F(\mathfrak{t})=\sum_{d\geq 0} e^{d \mathfrak{t}}{4d\,!\over (d\,!)^{4}}
=
\,_{3}F_{2}({1\over 4},{2\over 4},{3\over 4}; 1,1; 4^{4} e^{\mathfrak{t}})\,.
\end{equation}
The small $J$-function $J_{Q}$, as a cohomology valued generating series of GW one-point functions,
is then given by
\begin{equation}\label{eqnJquarticK3}
J_{Q}(T,\mathfrak{z})={1\over F(\mathfrak{t})} I_{Q}(\mathfrak{t},\mathfrak{z})\,,\quad
 T={G(\mathfrak{t})\over F(\mathfrak{t})}\,.
 \end{equation}
 The big $J$-function can be obtained using axioms of GW theory.
 Expanding it in terms of $P$ tells that
the genus zero GW invariants are essentially trivial.
Computationally this can be seen from the symmetric structure of the above quantum differential equation, see Appendix \ref{secmarginalquartic}
for detailed discussions.

 \subsubsection{Minimal quotient}

 We consider the quotient of $Q$ by the trivial action
\begin{equation}
k\in G_{\mathrm{min}}= \mathbb{Z}/4\mathbb{Z}:\,
 [x,y,z,w]\mapsto [e^{2\pi i k \over 4}x, e^{2\pi i k \over 4}y, e^{2\pi i k
  \over 4}z,e^{2\pi i k \over 4}w ]\,.
\end{equation}
 The quotient is the stack
\begin{equation}
Y_{\mathrm{min}}=Q\times B\mathbold{\mu}_{4}\,,
\end{equation}
 where $\mathbold{\mu}_{4}\cong \mathbb{Z}/4\mathbb{Z}$
 is the multiplicative group consisting of 4th roots of unity and $B\mathbold{\mu}_{4}$ is the corresponding classifying space.

We follow the method in  \cite{Coates:2019}
to obtain the Givental $I$- and thus $J$-function by
using extended stacky fans.
First one takes the stacky fan $(N,\Sigma,\rho)$ that defines the
toric Deligne-Mumford stack $\mathbb{P}^{3}\times B\mathbold{\mu}_{4}
$
\begin{equation}
\beta:\mathbb{Z}^{4}\rightarrow N=\mathbb{Z}^{3}\oplus (\mathbb{Z}/4\mathbb{Z})\,,
\end{equation}
with
$\beta$ given by the matrix
$$
\left(
  \begin{array}{cccc}
-1 & 1 & 0 & 0\\
-1 & 0& 1 & 0\\
-1 & 0 & 0 & 1\\
\hline
0 & 0 & 0 & 0
  \end{array}
\right)\,.
$$
Then one considers the extended stacky fan whose corresponding matrix presentation for $\beta^{S}:\mathbb{Z}^{4\oplus 4}\rightarrow N$ is
$$
\left(
  \begin{array}{cccc|cccc}
-1 & 1 & 0 & 0 & 0 & 0 &0 & 0\\
-1 & 0& 1 & 0 & 0 & 0 &0 & 0\\
-1 & 0 & 0 & 1 & 0 & 0 &0 & 0\\
\hline
0 & 0 & 0 & 0 & 0 & 1 &2& 3
  \end{array}
\right)\,.
$$

With the twisting data, it follows that the twisted $I$-function is given by
\begin{equation}
I_{\mathrm{tw}}=\mathfrak{z} e^{ P\sum_{\rho=0}^{3}\mathfrak{t}_{\rho}/\mathfrak{z}}\sum_{b\in \{0,{1\over 4}, {2\over 4}, {3\over 4}\}}
\sum_{\substack{\lambda=(d,k_0,k_1,k_2,k_3)\geq 0\\
\langle {k_1+2k_2+3k_3\over 4}\rangle=b}}
\Lambda^{d}
e^{d\sum_{\rho=0}^{3}\mathfrak{t}_{\rho}}
x_0^{k_0}x_{1}^{k_{1}}x_{2}^{k_{2}}x_{3}^{k_{3}}I_{\lambda,b}M_{\lambda,b}\mathbb{1}_{b}\,,
\end{equation}
with
\begin{equation}
I_{\lambda,b}={1\over \mathfrak{z}^{k_{0}+k_{1}+k_{2}+k_{3}} k_{0}!k_{1}!k_{2}!k_{3}!}
\prod_{\ell=1}^{d}{1\over (P+\ell \mathfrak{z})^4}\,,\quad
M_{\lambda,b}=
\prod_{\ell=1}^{4d} (4P+\ell \mathfrak{z})
\,.
\end{equation}
Here
\begin{itemize}
\item
$P$ is the class of $c_{1}(\mathcal{O}_{Y_{\mathrm{min}}}(1))$ in $H^{2}(Y_{\mathrm{min}})$, $\mathfrak{t}_{\rho}\in \mathbb{C}$ with $\mathrm{Re}\,\mathfrak{t}_{\rho}<0$, $\rho=0,1,2,3$.
\item
$\Lambda$ is again  the Novikov variable, similarly $x_{j},j=0,1,2,3$ are formal parameters, and
 $\mathfrak{z}$ is the descendant variable.
\item
$b$ labels different components of the inertia stack $IY_{\mathrm{min}}$ of $Y_{\mathrm{min}}$,
$\mathbb{1}_{b}$
is the fundamental class for the corresponding component.
\end{itemize}
It is direct to see that
the summation in $I_{\mathrm{tw}}$ splits into the product of two, involving $d$ and $(k_0,k_1,k_2,k_3)$ respectively,
\begin{equation}
I_{\mathrm{tw}}=I_{Q}\cdot \sum_{(k_{0},k_{1},k_{2},k_{3})\in \mathbb{N}^{4}}
\prod_{j=0}^{3} {x_{j}^{k_{j}}\over \mathfrak{z}^{k_{j}} k_{j}!}\mathbb{1}_{\langle {k_{1}+2k_{2}+3k_{3}\over 4}\rangle}
\,,
\end{equation}
where $I_{Q}$ is the $I$-function for the quartic K3 given in \eqref{eqnIquarticK3}.
This reflects the product structure
$Y_{\mathrm{min}}=\mathbb{P}^{3}\times B\mathbold{\mu}_{4}$.
From the quantum Lefschetz principle
one can then determine the big $I$-function
$I_{Y_{\mathrm{min}}}$
 and
the corresponding $J$-function $J_{Y_{\mathrm{min}}}$.

 \subsubsection{Maximal quotient}

Consider the action of $G_{\mathrm{max}}=(\mathbb{Z}/4\mathbb{Z})^{4}$ on the homogeneous coordinate ring $\mathbb{C}[x,y,z,w]$ of $\mathbb{P}^{3}$
\begin{equation}
(k_0,k_1,k_2,k_3)\in G_{\mathrm{max}} : \,[x,y,z,w]\mapsto [e^{2\pi i k_0 \over 4}x, e^{2\pi i k_1 \over 4}y, e^{2\pi i k_2 \over 4}z,e^{2\pi i k_3 \over 4}w ]\,.
\end{equation}
We consider the faithful action by $G=G_{\mathrm{max}}/G_{\mathrm{min}}$.
The quotient
$Y_{\mathrm{max}}=Q/G$ is then a hypersurface in the toric stack $\mathbb{P}^{3}/G$.
Denote $i:Y_{\mathrm{max}}\rightarrow  \mathbb{P}^{3}/G$ to be the inclusion
and $IY_{\mathrm{max}}$ the inertia stack of $Y_{\mathrm{max}}$.

The number of components for the inertia stack $IY_{\mathrm{max}}$ is the same as
the cardinality of the Abelian group $G$.
These components can also be classified according to the
type of fixed locus $(\mathbb{P}^{3})^{(g)}$.
For example, let $z_0$ be the homogeneous coordinate corresponding to the ray $u_0$, then
all of the fixed loci for the reduced inertia stack can be chosen to be
intersections of coordinate hyperplanes with $z_0\neq 0$.
By examining the components $Y^{(g)}_{\mathrm{max}}$ in the inertia stack $IY_{\mathrm{max}}$
according to $g\in G$, one sees that
the coarse moduli are projective spaces and
 the dimension of $H_{\mathrm{CR}}^{*}=H^{*}(IY_{\mathrm{max}})$ is
\begin{equation}\label{eqnChenRuancohomologymaxquotientquartic}
\dim H^{*}(\mathbb{P}^{2})\cdot 1+\dim H^{*}(\mathbb{P}^{1})\cdot {4\choose 1}\cdot 3+\dim H^{*}(\mathbb{P}^{0})\cdot {4\choose 2}\cdot (6+ {3\over 2}\cdot 2)=81\,.
\end{equation}

For the stack $Y_{\mathrm{max}}$, the big quantum cohomology is defined on the Chen-Ruan cohomology
$H^{*}_{\mathrm{CR}}(Y_{\mathrm{max}})=H^{*}(IY_{\mathrm{max}})$.
According to \cite{Coates:2019}, the corresponding invariants are encoded in the $H^{*}_{\mathrm{CR}}([\mathbb{P}^{3}/G])$-valued $J$-function
which we now discuss.
First one takes the stacky fan $(N,\Sigma,\rho)$ that defines the toric Deligne-Mumford stack $[\mathbb{P}^{3}/G]$
\begin{equation*}
\beta:\mathbb{Z}^{4}\rightarrow N=\mathbb{Z}^{3}\,,
\end{equation*}
with
$\beta$ given by the matrix of rays in the fan
\begin{equation*}
(u_0,u_1,u_2,u_3)=
\left(
  \begin{array}{cccc}
-4 & 4 & 0 & 0\\
-4 & 0& 4 & 0\\
-4 & 0 & 0 & 4
  \end{array}
\right)\,.
\end{equation*}
Now following the method in \cite{Coates:2019}, one considers the extended stacky fan
and obtain the twisted $I$-function as before.
To be explicit, we take the extended fan to be given by $\beta^{S}:\mathbb{Z}^{4+4}\rightarrow N$
with the extra generators
mapped to $
{u_{j}/ 4}\in
\mathbb{Z}/4\mathbb{Z}\{u_0,u_1,u_2,u_3\}, j=0,1,2,3$.
We also take
\begin{equation*}
\mathrm{Box}(\Sigma)=
\{b=(b_{j})_{j=0}^{3}\in ({1\over 4}\mathbb{Z}/\mathbb{Z})^{\oplus 4} ~|~
\text{at least one}~b_{j} ~\text{is zero}\}\,.
\end{equation*}
The action of $G_{\mathrm{min}}$
induces the shift on $b=(b_{0},b_{1},b_{2},b_{3})$
by ${1\over 4}(1,1,1,1)$.
Note that elements in $\mathrm{Box}(\Sigma)$
are not in one-to-one correspondence with components of $IY_{\mathrm{max}}$, but modulo the action
by $G_{\mathrm{min}}$
this is so.
It follows that the twisted $I$-function is given by
\begin{equation}
\label{eqnK3maximalquotientI}
I_{\mathrm{tw}}=\mathfrak{z}e^{ P\sum_{\rho=0}^{3}\mathfrak{t}_{\rho}/\mathfrak{z}}\sum_{b\in \mathrm{Box}(\Sigma)}
\sum_{
\substack{d,k_{j}\geq 0\\
  k_{j}\in \mathbb{Z}\\
\langle  {1\over 4}k_{j} -d \rangle=b_j}
}
\Lambda^{d}
e^{d\sum_{\rho=0}^{3}\mathfrak{t}_{\rho}}
\prod_{j=0}^{3}x_{j}^{k_{j}}I_{\lambda,b}M_{\lambda,b}\mathbb{1}_{b}\,,
\end{equation}
with
\begin{eqnarray*}
I_{\lambda,b}&=&{1\over \prod_{j=0}^{3}\mathfrak{z}^{k_{j}}k_{j}!}
\prod_{j=0}^{3}
{\prod_{a:\langle a\rangle=\langle  d- {1\over 4} k_{j} \rangle,\, a
\leq 0 } (P+a\mathfrak{z})
\over
\prod_{a:\langle a\rangle=\langle  d-{1\over 4} k_{j} \rangle, \,a
\leq   d-  {1\over 4} k_{j} }(P+a\mathfrak{z})}
\,,\nonumber\\
M_{\lambda,b}&=&
\prod_{\ell=1}^{4d} (4P+\ell \mathfrak{z})
\,.
\end{eqnarray*}
Here again
\begin{itemize}
\item
$P$ is the class of $c_{1}(\mathcal{O}_{Y}(1))$ in $H^{2}(Y)$, $\mathfrak{t}_{\rho}\in \mathbb{C}$ with $\mathrm{Re}\,\mathfrak{t}_{\rho}<0$,
$\rho=0,1,2,3$.
\item
$\Lambda$ is the Novikov variable which we later set to $1$, $x_{j},j=0,1,2,3$ are formal parameters which will be collectively denoted by $x$, and
 $\mathfrak{z}$ is the descendant variable.
\item
$b$ labels different components  $Y^{(g)}_{\mathrm{max}},g\in G$ of the inertia stack $IY_{\mathrm{max}}$ of $Y_{\mathrm{max}}$,
$\mathbb{1}_{b}$
is the fundamental class for the corresponding component and has degree
$2\sum_{j=0}^{3} b_{j}$ in Chen-Ruan cohomology.
\end{itemize}

A direct calculation shows that
\begin{equation}
I_{\mathrm{tw}}=\mathfrak{z} F(\mathfrak{t})+\widetilde{G}(\mathfrak{t},x)
+\mathcal{O}(\mathfrak{z}^{-1})\,,
\end{equation}
where $F$ is the same as the one in \eqref{eqnFquarticK3} with the relation
$\mathfrak{t}=\sum_{\rho=0}^{3}\mathfrak{t}_{\rho}$,
and
\begin{equation*}
\widetilde{G}(\mathfrak{t},x)=G(\mathfrak{t})+F(\mathfrak{t})\sum_{j=0}^{3}x_{j}\mathbb{1}_{(0,\cdots, {1\over 4},\cdots, 0)}\,.
\end{equation*}
From the quantum Lefschetz principle
one has
$I_{Y_{\mathrm{max}}}(\mathfrak{t},x,\mathfrak{z})=i^{*} I_{\mathrm{tw}}$.
Similar to \eqref{eqnJquarticK3}, one has the $J$-function
\begin{equation*}
T(\mathfrak{t},x)={\widetilde{G}(\mathfrak{t}, x)\over F(\mathfrak{t})}\,,\quad
J_{Y_{\mathrm{max}}}(T,\mathfrak{z})={I_{Y_{\mathrm{max}}}(\mathfrak{t},x,\mathfrak{z})\over F(\mathfrak{t})}\,.
\end{equation*}

\subsection{Genus zero mirror symmetry}

We focus on the one-point functions of orbifold GW invariants
with fixed cohomology insertions.
They correspond to coefficients of the cohomology-valued $J$-function $J_{Y}=I_{Y}/F$, in
the Laurent expansion in $\mathfrak{z}^{-1}$ and with respect to
the basis for $H^{*}_{\mathrm{CR}}(Y)$.
For $Y=Q$ and $Y=Q/G_{\mathrm{min}}\,$, the identification between Givental $I$-functions and
automorphic forms is immediate
since the quantum differential equation
\eqref{eqnquantumdifferentialequationquarticK3} matches the Picard-Fuchs equation
\eqref{eqnPFK3beforetwist}
for the $D_{\bold{0}}$ sector.
We now discuss the maximal quotient $Y=Y_{\mathrm{max}}$.
Setting $\mathfrak{t}=\sum_{\rho=0}^{3} \mathfrak{t}_{\rho}, \varepsilon=P/\mathfrak{z}$ and simplifying
\eqref{eqnK3maximalquotientI}, one obtains
\begin{equation}\label{eqntwistedIfunctionexpandedinBrieskorn}
I_{\mathrm{tw}}
=\mathfrak{z}
\sum_{\bold{k}\in\mathbb{N}^4
}\prod_{j=0}^{3}{(\mathfrak{z}^{1\over 4} x_{j})^{k_{j}}\over \mathfrak{z}^{k_{j}}k_{j}!}
\cdot
I_{\mathrm{tw},\bold{k}}
\,,\quad
\bold{k}=(k_{0},k_{1},k_{2},k_{3})\,,
\end{equation}
with
\begin{eqnarray}
I_{\mathrm{tw},\bold{k}}(\mathfrak{t},\varepsilon,\mathfrak{z})&=&
e^{\varepsilon \mathfrak{t}}
\sum_{
\substack{d:\,
d\geq 0\\
 \prod_{j=0}^{3} \langle d-{1\over 4}k_{j} \rangle=0}
}
\Bigg(
e^{d\mathfrak{t}}
\prod_{j=0}^{3}
{\prod_{a:\langle a\rangle=\langle  d- {1\over 4} k_{j} \rangle, \,a
\leq 0 } (\varepsilon+a)
\over
\prod_{a:\langle a\rangle=\langle  d-{1\over 4} k_{j} \rangle,\, a
\leq   d-  {1\over 4} k_{j} }(\varepsilon+a)}
\prod_{\ell=1}^{4d} (4\varepsilon+\ell ) \label{eqnweightedtwistedsectorcomponentsofI}
 \\
&&\cdot\,\mathbb{1}_{\langle {1\over 4}\bold{k}-(d,d,d,d) \rangle}\prod_{j=0}^{3}\mathfrak{z}^{-\langle {1\over 4}k_{j}-d\rangle}
\Bigg)
\label{eqnfundamentalclass}\,.
\end{eqnarray}

Due to the constraint
 $\prod_{j=0}^{3} \langle d-{1\over 4}k_{j} \rangle=0$ in the range for
 $d$ in \eqref{eqnweightedtwistedsectorcomponentsofI},
 $I_{\mathrm{tw},\bold{k}}$ splits into the sum of $4$ series
according to the choice of $j_{*}\in \{0,1,2,3\}$ such that
$ \langle d-{1\over 4}k_{j_{*}}\rangle =0$.
Within each series
the quantity $\langle d-{1\over 4}k_{j}\rangle$ and thus
 \eqref{eqnfundamentalclass} are independent of
those $d$ that are subject to this condition.
In fact,
a closer look gives
\begin{eqnarray}
I_{\mathrm{tw},\bold{k}}(\mathfrak{t},\varepsilon,\mathfrak{z})&=&e^{\varepsilon \mathfrak{t}}
\sum_{
\substack{d:\,
d\geq 0\\
 \prod_{j=0}^{3} \langle d-{1\over 4}k_{j} \rangle=0}
}\Bigg( e^{dt}
 { \prod_{j=0}^{3} {\Gamma}(\varepsilon+1-\langle {1\over 4}k_{j}-d \rangle)\over \Gamma(4\varepsilon+1)}
{\Gamma(4\varepsilon+4d+1)\over  \prod_{j=0}^{3} \Gamma(\varepsilon+d-{1\over 4}k_{j}+1)  }
\label{eqnweightedtwistedsectorcomponentsofIsimplified}\\
&&\cdot \,\mathbb{1}_{\langle {1\over 4}\bold{k}-(d,d,d,d) \rangle}\prod_{j=0}^{3}\mathfrak{z}^{-\langle {1\over 4}k_{j}-d\rangle}\Bigg)\nonumber
\,.
\end{eqnarray}

From the perspective of the group action by $G$,
  each
$\bold{k}=(k_{0},k_{1},k_{2},k_{3})$
corresponds to
$$g:=(g_0,g_1,g_2,g_3)=(e^{2\pi i k_{0}\over 4},\cdots, e^{2\pi i k_{3}\over 4})\in \mathbold{\mu}_{4}^{\oplus 4}\cong G_{\mathrm{max}}\,.$$
The condition
 $\langle d-{1\over 4}k_{j_{*}}\rangle =0$ imposed by the constraint
 $\prod_{j=0}^{3} \langle d-{1\over 4}k_{j} \rangle=0$ corresponds to
 applying a transformation in $G_{\mathrm{min}}$
such that
the $g_{j_{*}}$-component of $g$ is gauged to $1$.
For such a choice of $j_{*}$, similar to the discussion in
\eqref{eqnChenRuancohomologymaxquotientquartic},
the coarse moduli of the corresponding inertia stack component $ Y^{(g)}$
is a projective space of dimension
$$3-\mathrm{cardinality}(\{j\in \{0,1,2,3\}~|~k_{j}-k_{j_{*}}=0\in \mathbb{Z}/4\mathbb{Z}\})\,.$$
Hence the cohomology ring $H^{*}(Y^{g})$ of this component  $Y^{(g)}$
gives a subspace of  the Chen-Ruan cohomology $H^{*}_{\mathrm{CR}}(Y)=H^{*}(IY)$ with dimension
\begin{equation}
N_{\bold{k}, j_{*}}:=4-\mathrm{cardinality}(\{j\in \{0,1,2,3\}~|~k_{j}-k_{j_{*}}=0\in \mathbb{Z}/4\mathbb{Z}\})\,.
\end{equation}
Specifically,  as a ring, its generators are given by
the fundamental class $\mathbb{1}_{\langle {1\over 4}\bold{k}-({1\over 4}k_{j_{*}},{1\over 4}k_{j_{*}},{1\over 4}k_{j_{*}},{1\over 4}k_{j_{*}}) \rangle}$ and the $P$-class.
Their contributions to the $H^{*}_{\mathrm{CR}}(Y)$-valued $I$-function
$i^{*}I_{\mathrm{tw}}$
are then obtained by
expanding the $d={1\over 4}k_{j_{*}}$ term in \eqref{eqnweightedtwistedsectorcomponentsofIsimplified}
with respect to the $\varepsilon$-parameter up to degree
$N_{\bold{k}, j_{*}}-1$.
The number of different functions arising as the coefficients in
\eqref{eqnweightedtwistedsectorcomponentsofIsimplified}
is thus given by
\begin{equation}\label{eqnreducedordertwistedsectorequationquartic}
N_{\bold{k}}:=4-\mathrm{cardinality}(\{k_{j} \}_{j=0}^{3})\,.
\end{equation}

From the expression in \eqref{eqnweightedtwistedsectorcomponentsofIsimplified}
one can directly check  that $I_{\mathrm{tw},\bold{k}}$
satisfies the differential equation
\begin{equation}\label{eqntwistedsectorcomponentsofI}
\left(\prod_{j=0}^{3}(\partial_{\mathfrak{t}}-{1\over 4} k_{j})
-4^{4}e^{\mathfrak{t}}\prod_{i=1}^{4} (\partial_{\mathfrak{t}}+{i\over 4})\right)I_{\mathrm{tw},\bold{k}}=0\,.
\end{equation}
Comparing \eqref{eqntwistedsectorcomponentsofI}
 with \eqref{eqnPicardFuchs} in Lemma \ref{lemPicardFuchs}, we see that
 $I_{\mathrm{tw},\bold{k}}$ satisfies the same equation as $a \cdot \zeta[\bold{z}^{\bold{m}}\Omega]$,
under the mirror map
\begin{equation}\label{eqnidentification}
 e^{\mathfrak{t}}=a^{-4}\,,\quad k_{j}=m_{j}\,,~j=0,1,2,3\,.
 \end{equation}
This tells that the $I$-function $I_{\mathrm{tw},\bold{k}}$
is (component of) a cohomology-valued automorphic form.
Under the map \eqref{eqnidentification}, the dimension
\eqref{eqnreducedordertwistedsectorequationquartic}
of the cohomology  ring of $Y^{(g)}$ agrees perfectly with the dimension
\eqref{eqnreducedordertwistedsectorequation}
of the space of twisted sectors involved in $D_{\bold{m}}$.
Interestingly,
the shift $\bold{k}\mapsto \bold{k}+(1,1,1,1)$
on the one hand is induced by the action of $G_{\mathrm{min}}$ on $Y$, on the other hand
maps the twisted sector $\zeta[\bold{z}^{\bold{m}}\Omega]$ to
 $\zeta[\bold{z}^{\bold{m}+\bold{1}}\Omega]$.
Moreover,  under \eqref{eqnidentification} we also see that
$I_{\mathrm{tw}}$ in \eqref{eqntwistedIfunctionexpandedinBrieskorn}
satisfies the same differential equation as the geometric section
\begin{equation}
a\cdot \mathfrak{z}
\sum_{\bold{m}\in\mathbb{N}^4
}\prod_{j=0}^{3}{(x_{j}\mathfrak{z}^{1\over 4})^{m_{j}}\over \mathfrak{z}^{m_{j}}m_{j}!}
\cdot
s[\bold{z}^{\bold{m}}\Omega]
=a \mathfrak{z}\, s [
e^{\sum_{j=0}^{3} { \mathfrak{z}^{1\over 4} x_{j}z_{j}\over\mathfrak{z}} }\Omega
]\,.
\end{equation}
According to the isomorphism in
 \eqref{eqncomparison},  this tells that $I_{\mathrm{tw}}$ is mirror to an oscillating integral, with
the fundamental class
\eqref{eqnfundamentalclass} recording the data $\bold{k}$ that is mirror to
$\bold{m}$. This matches the general expectations from mirror symmetry, see e.g., \cite{Givental:1995a, Givental:1995b, Givental:1996, Lee:2014, Iritani:2021}.

We therefore conclude that
the identification \eqref{eqnidentification} provides a map between twisted sectors in Chen-Ruan cohomology and
twisted sectors in singularity theory that respects the D-module structures.

\begin{appendices}

\section{Basics on singularities}
\label{secpreliminariesonsingularities}

We review some basics on deformations of singularities and fix some notation,
following closely \cite{Saito:1983, Matsuo:1998, Kulikov:1998, Saito:08}.
The results reviewed below admit coordinate free descriptions, however for concreteness we phrase them in local coordinates.

Let
\begin{equation}\label{eqngeneralsingularity}
f: (\mathbb{C}^{n+1},0)\rightarrow (\mathbb{C},0)
\end{equation}
be the germ of a holomorphic function near $0\in\mathbb{C}^{n+1}$ with only isolated critical point at $0\in \mathbb{C}^{n+1}$ 
and critical value 0. 
It is well-known that there is a sufficiently small disk \(\Delta\)
around \(0 \in \mathbb{C}\) and a sufficiently small neighborhood
\(B\) of \(0 \in \mathbb{C}^{n+1}\) such that \(f(B \cap f^{-1}(\Delta^{*})) \to \Delta^{*}\)
is a locally topologically trivial
fibration, called the Milnor fibration. Its typical fiber is called
the Milnor fiber.

Let $z=(z_{0},z_1,\cdots, z_{n})$ be the standard complex coordinates on $\mathbb{C}^{n+1}$ at the origin,
and
$t$ the one for the target space $\mathbb{C}$ near the critical value.
 If \(f\) has only isolated critical point at $0\in \mathbb{C}^{n+1}$,
then the Jacobi/Milnor ring
\begin{equation}
\mathrm{Jac}(f)=\mathbb{C}\{z_0,\cdots, z_n\}/\langle \partial_{z_{0}}f,\cdots, \partial_{z_{n}}f \rangle
\end{equation}
is a finite dimensional vector space whose dimension is given by the Milnor number $\mu=\mu(f)=\dim H^{n}(f^{-1}(t)),t\neq 0$.

Following \cite{Saito:1983, Matsuo:1998, Saito:08, Kulikov:1998}, we consider an $m$-parameter deformation, with $m\leq \mu$,
 with only isolated critical point at the origin as follows.
Taking a subset from the basis of the vector space $\mathrm{Jac}(f)$ which are represented by functions $e_{k}(z),k=0,1,\cdots, m-1$.
Consider the function
\begin{equation}
F(z,s)=f(z)+\sum_{k=0}^{m-1} s_{k}e_{k}(z)
\end{equation}
defined around a neighborhood $X$ of $(z,s)=(0,0)$ in $\mathbb{C}^{n+1}\times \mathbb{C}^{m}$, where $s=(s_0,\cdots, s_{m-1})$ are the standard coordinates
of $\mathbb{C}^{m}$ at the origin. Then the germ at $(z,s)=(0,0)$ of
$
F: X\rightarrow \mathbb{C}$
gives a deformation
of the germ of the function $f$.
Let
\begin{equation}
q: X\rightarrow S\subseteq \mathbb{C}^{m}
\,,\quad
\varphi=(F,q): X\rightarrow \mathbb{C}\times S\,,
\end{equation}
and
\begin{equation}
\pi_{1}: \mathbb{C}\times S\rightarrow  \mathbb{C}\,,\quad \pi_{2}: \mathbb{C}\times S\rightarrow S
\end{equation}
be the projections. Then one has the commutative diagram shown in
Figure \ref{fig:deformationofsingularity}.
\begin{figure}[h]
  \renewcommand{\arraystretch}{1}
\begin{displaymath}
\xymatrix{
&&X \ar[dd]^{\varphi=(F, q)} \ar[ddrr]^{q} \ar[ddll]_{F}
& & \\
&&       &&\\
\mathbb{C} && \mathbb{C}\times S \ar[ll]_{\pi_1} \ar[rr]^{\pi_2}
&& S            }
\end{displaymath}
 \caption[deformationofsingularity]{A deformation of singularity.}
   \label{fig:deformationofsingularity}
\end{figure}

Assume further that $S$ lies in the $\mu$-constant stratum.
Let
\begin{equation}
C=\{(z,s)\in X~|~\mathrm{rank} (d\varphi)_{(z,s)}\leq m\}
\end{equation}
be the critical set of the map $\varphi$. This is an analytic subspace of $X$ with
structure sheaf
\begin{equation}\label{eqncriticalset}
\mathcal{O}_{C}=\mathcal{O}_{X}/\langle \partial_{z_{0}}F,\cdots, \partial_{z_{n}}F\rangle\,.
\end{equation}
which has the usual commutative ring structure  $\circ$ and is regarded as a deformation of $\mathrm{Jac}(f)$.
By the assumption that $f$ has an isolated singularity, the map $q|_{C}: C\rightarrow S$ is a proper, finite map of degree $\mu$
and
thus $q_{*}\mathcal{O}_{C}$ is a locally free sheaf  of rank $\mu$ over $S$.

\subsection*{Frobenius structure}

The sheaf of relative differentials for the family $\varphi: X\rightarrow \mathbb{C}\times S$ is of central importance in studying
the mixed Hodge structure of singularities \cite{Kulikov:1998}.
Using the fact that $q$ is smooth, one can check that
$\Omega_{X/(\mathbb{C}\times S)}^{n+1}$ is a free $\mathcal{O}_{C}$-module of rank one and hence one has an identification
\begin{equation}\label{eqnprimitiveform}
\rho_{\xi }: q_{*}\mathcal{O}_{C}\rightarrow q_{*}\Omega_{X/(\mathbb{C}\times S)}^{n+1}\,.
\end{equation}
This identification is not canonical and requires a choice $\xi\in  q_{*}\Omega_{X/(\mathbb{C}\times S)}^{n+1}$
that is a generator of $q_{*}\Omega_{X/(\mathbb{C}\times S)}^{n+1}$ as a free $q_{*}\mathcal{O}_{C}$-module.
When $F$ is the semi-universal deformation (i.e., universal unfolding), with in particular $m=\mu$, a suitable choice of $\xi$ in \eqref{eqnprimitiveform} known as primitive form and some other data including higher residue parings
provide
a Frobenius algebra structure on $ q_{*}\mathcal{O}_{C}$, as well as a Frobenius manifold structure on $S$ via the Kodaira-Spencer map.
See  \cite{Saito:1983, Dubrovin:1996, Saito:08, Kato:1986, Satake:1993, Audin:1997, Satake:1998, Noumi:1998, Manin:1999fro, Manin:1999three,
Hertling:2002, Sabbah:2008, Satake:2010, Satake:2011, Strachan:2012}.
This structure can be further enhanced to a
special K\"ahler structure \cite{Strominger:1990, Freed:1999}.
Interested readers are referred to \cite{Dubrovin:1996, Cecotti:1991top, Alim:2017} for details.

\subsection*{Brieskorn lattice}
An essential ingredient in the studies of singularity theory
is the Brieskorn lattice $\mathcal{H}^{(0)}$ in the following set of sheaves
\begin{eqnarray}\label{eqnBrieskornlattices}
\mathcal{H}^{(-2)}&:=&\mathbb{R}^{n}\varphi_{*}\Omega_{X/(\mathbb{C}\times S)}^{\bullet}
\cong
\mathrm{Ker}\, (\varphi_{*}\Omega_{X/(\mathbb{C}\times S)}^{n}\rightarrow \varphi_{*}\Omega_{X/(\mathbb{C}\times S)}^{n+1})
/ d\varphi_{*}\Omega_{X/(\mathbb{C}\times S)}^{n-1}\,,\quad\nonumber\\
\mathcal{H}^{(-1)}&:=&
\varphi_{*}\Omega_{X/(\mathbb{C}\times S)}^{n}
/ d\varphi_{*}\Omega_{X/(\mathbb{C}\times S)}^{n-1}\cong
\varphi_{*}\Omega_{X/ S}^{n}
/(dF \wedge \varphi_{*}\Omega_{X  /S}^{n-1}+d\varphi_{*}\Omega_{X/S}^{n-1})
\,,\quad\nonumber\\
\mathcal{H}^{(0)}&:=&\varphi_{*}\Omega_{X/S}^{n+1}/(dF\wedge d\varphi_{*}\Omega_{X/ S}^{n-1})
\end{eqnarray}
where the isomorphisms follow from the Stein property of $\varphi$.
The latter two sheaves are extensions of the relative de Rham cohomology sheaf $\mathcal{H}^{(-2)}$.
Furthermore, one has
\begin{equation}\label{eqnfromBrieskornlatticetorelativedifferentials}
0\rightarrow \mathcal{H}^{(-1)}\rightarrow \mathcal{H}^{(0)}\xrightarrow{r} \varphi_{*}\Omega_{X/(\mathbb{C}\times S)}^{n+1}\rightarrow 0\,,
\end{equation}
where $r$ is the natural surjection.
The Brieskorn lattice $\mathcal{H}^{(0)}$ is
generated by the classes $[\omega]$ of the forms $\omega\in  \varphi_{*}\Omega_{X/S}^{n+1}$.
The corresponding
Gelfand-Leray residues, called geometric sections $s[\omega]$,
give rise to classes in $\mathcal{H}^{(-2)}$,
see \cite{Kulikov:1998}[Section I.5.1.6] and \cite{Arnold:1985, Arnold:1988}. The Brieskorn lattice $\mathcal{H}^{(0)}$ is
part of the Hodge filtration $\{\mathcal{H}^{(k)}\}_{k\geq 0}$ on the Gauss-Manin differential system $\mathcal{H}_{X}$ given by
\begin{equation}
\label{eqnHodgefiltrationGMsystem}
\mathcal{H}^{(k)}=F^{n-k}\mathcal{H}_{X}=\partial_{t}^{k}\mathcal{H}^{(0)}\,, \quad k\leq n\,.
\end{equation}
In particular, one has an inclusion
$\mathcal{H}^{(0)}\subseteq \mathcal{H}_{X}$.
The meromorphic connection $\mathcal{M}_{X}$ is related to $\mathcal{H}^{(0)}, \mathcal{H}_{X}$ by localization, that is, $\mathcal{M}_{X}=(\mathcal{H}_{X})_{(t)}=(\mathcal{H}^{(0)})_{(t)}$.
The D-modules $\mathcal{H}_{X},\mathcal{M}_{X}$ are equipped with the Kashiwara-Malgrange $V$-filtration $V^{\bullet}$ which can be defined
according the eigenvalues of the action by $t\partial_{t}$.
The canonical lattice $\mathcal{L}$ is then defined by
\begin{equation}
\mathcal{L}:=V^{>-1}\mathcal{M}_{X}\,.
\end{equation}
It is a locally free sheaf  that extends the  de Rham cohomology sheaf $\mathcal{H}^{(-2)}$
and lies in the Gauss-Manin differential system $\mathcal{H}_{X}$.

\subsection*{Mixed Hodge structure and embedding of the Brieskorn lattice}

We now consider mixed Hodge structures on the vanishing cohomology, following closely the expositions in
 \cite{Kulikov:1998}.

 Assume  that $S$ lies in the so-called $\mu$-constant stratum \cite{Arnold:1985, Arnold:1988}.
 Denote
  $$X(s)=X\times_{\mathbb{C}\times S} (\mathbb{C}\times \{s\} )\,,$$
then the restrictions of constructions for $X\rightarrow \mathbb{C}\times S$
to $X(s)\rightarrow \mathbb{C}$ coincide with
those for the singularity $f_{s}:=F|_{X(s)}$. To avoiding choosing $t\neq 0\in \mathbb{C}$ when considering structures on the cohomologies of
$X(s)_{t}:=f_{s}^{-1}(t), s\in S$, we take the homotopy fiber $X(s)_{\infty}$ and consider the vanishing cohomology $H^{n}(X(s)_{\infty})$
which can be intrinsically defined in terms of the vanishing cycle functor $R\Phi_{f_{s}}$
 as
  $$ H^{n}(X(s)_{\infty})=R^{n}\Gamma\circ R\Phi_{f_{s}}( \mathbb{C}_{X(s)})\,.$$
It follows that
the vanishing cohomology
$H^{n}(X(s)_{\infty})$ is isomorphic to the fiber  at $(0, s)$ of the canonical lattice
\begin{equation}
\label{eqninclusionofLinGMdifferentialsystem}
\mathcal{L}(s)\subseteq \mathcal{H}_{X(s)}\,.
\end{equation}
 Namely, one has
\begin{equation}
\label{eqnvanishingcohomologyasfiber}
\psi(s):
H^{n}(X(s)_{\infty})\cong \mathcal{L}(s)/t\mathcal{L}(s)\,.
\end{equation}
By analyzing the asymptotic behavior of the sections that  generate the Brieskorn lattice $\mathcal{H}^{(0)}$, one has the embedding
\begin{equation}
\label{eqninclusionofBrieskorninL}
\mathcal{H}^{(0)}(s)\subseteq \mathcal{L}(s)
\end{equation}
which
respects localization $(-)_{(t)}$.\\

The vanishing cohomology carries a mixed Hodge structure which we now describe.
From the monodromy action, one has
\begin{equation}\label{eqndecompositionofMHS}
  H^{n}(X(s)_{\infty})=\bigoplus_{\lambda:\,|\lambda|=1}H^{n}(X(s)_{\infty})_{\lambda}\xrightarrow{\underset{\sim}{\psi(s)}}\bigoplus_{-1<\alpha\leq 0} C_{\alpha}(s)\,,
\end{equation}
where
$H^{n}(X(s)_{\infty})_{\lambda}$ is the  eigenspace of the monodromy $T=T_{ss}T_{u}=T_{ss}e^{2\pi i N}$
of the singularity $f_{s}:\mathbb{C}^{n+1}\rightarrow \mathbb{C}$
around its singular fiber $f_{s}^{-1}(0)$ with eigenvalue $\lambda$.
This eigenspace is the same as  the generalized eigenspace  $C_{\alpha}(s)$ of $t\partial_t$
with eigenvalue $\alpha \in (-1,0]$.

The weight filtration is then induced by the Jacobson-Morosov filtration using the unipotent part $T_{u}$ of the monodromy $T$.
In particular, $H^{n}(X(s)_{\infty})_{\lambda=1}$ has weight $w=n+1$ and $H^{n}(X(s)_{\infty})_{\lambda\neq 1}$ weight $w=n$.

The Hodge filtration (in the sense of Saito-Scherk-Steenbrink) is induced by the one \eqref{eqnHodgefiltrationGMsystem} on the Gauss-Manin differential system $\mathcal{H}_{X}(s)$
via the inclusion \eqref{eqninclusionofLinGMdifferentialsystem} and the isomorphism \eqref{eqnvanishingcohomologyasfiber}.
It
is defined individually on each eigenspace $H^{n}(X(s)_{\infty})_{\lambda}$  or equivalently generalized eigenspace $C_{\alpha}(s)$ of $T$
as follows.
One first defines on the generalized eigenspace $C_{\alpha}(s)$
the filtration (recall that spaces in $\mathcal{H}^{(0)}(s)\subseteq \mathcal{L}(s)\subseteq \mathcal{H}_{X(s)}$ have induced $V$-filtrations from the one on $\mathcal{H}_{X(s)}$)
\begin{equation}\label{eqnFpC}
F^{p}C_{\alpha}(s)=\partial_{t}^{n-p}\mathrm{gr}_{V}^{n-p+\alpha}\mathcal{H}^{(0)}(s)\,,
\end{equation}
with $\mathrm{gr}_{V}^{n-p+\alpha}\mathcal{H}^{(0)}(s)$ generated by the leading parts of the sections
$[\omega],\omega\in {f_{s}}_{*}\Omega_{X(s)/\mathbb{C}}^{n+1}$.
This filtration satisfies (cf. \eqref{eqnfromBrieskornlatticetorelativedifferentials})
\begin{equation}
\mathrm{gr}_{F}^{p}C_{\alpha}(s)=\partial_{t}^{n-p}\mathrm{gr}_{V}^{n-p+\alpha}(\mathcal{H}^{(0)}(s)/\mathcal{H}^{(-1)}(s))\,.
\end{equation}
Recall that the spectra (or spectral numbers) of the singularity \(f_s\) is the
  multi-set of rational numbers \(\beta_1,\ldots,\beta_{\mu}\), with
  \begin{equation}\label{spectrum}
    \boxed{\operatorname{Card}\,\{i : \beta_i = \alpha + p\} = \dim\operatorname{Gr}^{n-p}_{F} C_{\alpha}(s)}\,,
  \end{equation}
  for \(p \in \mathbb{Z}\) and \(\alpha \in (-1,0]\).
Then via \eqref{eqndecompositionofMHS} one can transfer the above filtrations to the vanishing cohomology.
One declares
\begin{equation}\label{eqndecompositionofHodgefiltration}
   F^{p}H^{n}(X(s)_{\infty})\xrightarrow{\underset{\sim}{\psi(s)}} F^{p}(\mathcal{L}(s)/t\mathcal{L}(s))=\bigoplus_{-1<\alpha\leq 0} F^{p}C_{\alpha}(s)\,.
 \end{equation}
To be more concrete, the period integral of a geometric section $s[\omega]$ arising from
\begin{equation*}
[\omega]\in V^{>(n-p-1)}\mathcal{H}_{X(s)}\cap \mathcal{H}^{(0)}(s)=V^{>(n-p-1)}\mathcal{H}^{(0)}(s)
\end{equation*}
over a homology class $\gamma$ in the homological Milnor fibration  has the asymptotic behavior as $t\rightarrow 0$
\begin{equation}\label{eqnleadingpart}
\int_{\gamma} s[\omega]=t^{n-p+\alpha}t^{N}\omega_{\gamma}(s)+\cdots\,,\quad\alpha\in (-1,0]\,,\quad \omega_{\gamma}(s)\in\mathbb{C}\,.
\end{equation}
Fix a frame $\mathcal{B}$ of locally constant homology classes for the homological Milnor fibration.
Denote $\check{\gamma}$ to be the linear dual of $\gamma\in \mathcal{B}$ and
\begin{equation}\label{eqnzetaclass}
\zeta[\omega]:=\sum_{\gamma\in\mathcal{B}}\omega_{\gamma}(s)\cdot \check{\gamma}\,.
\end{equation}
The leading part of $[\omega]$ in $\mathrm{gr}_{V}^{n-p+\alpha}\mathcal{H}^{(0)}(s) $ gives rise to
a class in $F^{p}C_{\alpha}(s)$ via the map  $\partial_{t}^{n-p}$ in \eqref{eqnFpC},
and thus the class
$\zeta[\omega]$ in $F^{p}H^{n}(X(s)_{\infty})_{\lambda}$
under the further
isomorphism $\psi(s)$ in \eqref{eqndecompositionofMHS}.
We call $\zeta[\omega]$ the leading part of the geometric section $s[\omega]$.

The information of Hodge filtration is thus conveniently encoded in the spectrum of values $\beta=n-p+\alpha$ for the singularity
 defined in \eqref{spectrum}.\\

Since the monodromy operator $T$ is defined over $\mathbb{R}$,
the real structure on $H^{n}(X(s)_{\infty})$ maps $H^{n}(X(s)_{\infty})_{\lambda}$ to $H^{n}(X(s)_{\infty})_{\bar{\lambda}}$ in \eqref{eqndecompositionofMHS}
and can hence mix the spaces $C_{\alpha}(s)$ with different values of $\alpha$.
The polarization, the $N$ operator and the Hodge filtration $F^{\bullet}$
are all compatible with the eigenspace decomposition.

The embedding \eqref{eqninclusionofBrieskorninL} of the Brieskorn lattice
therefore contains crucial information of the mixed Hodge structure on the vanishing cohomology.
The period mapping that we use in this work is defined using this embedding.

\section{Elliptic modular forms in Calabi-Yau sector of Fermat quartic}
\label{secmarginalquartic}

We study elliptic modular forms
that appear in the  $D_{\bold{0}}$ sector of the Fermat quartic.

Under the Poincar\'e residue map \eqref{eqnLGCY}, the local system
 $D_{\bold{0}}$
corresponds to the Hodge bundle
$H_{T}^{2}(Q,\mathbb{C})$ of the Dwork pencil, where $H_{T}^{2}(Q,\mathbb{C})$ stands for the transcendental part
that is the orthogonal to the Neron-Severi lattice under the intersection pairing on $H^{2}(Q,\mathbb{Z})$.
The differential operator
$\mathcal{D}_{\mathrm{quartic}}$ in \eqref{eqnPFK3beforetwist}
then corresponds to the Picard-Fuchs operator for the Dwork family which is a lattice polarized
family of K3 surfaces.
It turns out that this polarization lattice is
$E_{8}^{\oplus 2} \oplus U\oplus \langle  -4 \rangle$, where $U$ stands for the rank-$2$ hyperbolic lattice with signature $(1,1)$.
The
transcendental lattice is then $H_{T}^{2}(Q,\mathbb{Z})\cong
U\oplus \langle 4\rangle$. See
\cite{Dolgachev:1996, Dolgachev:2013} for details.

\subsection*{Symmetric square structure and relation to elliptic modular forms}

From \cite{Dolgachev:1996} (see also \cite{Dolgachev:2013})
one sees that
\begin{equation}\label{eqnsymmetricquareHodgestructure}
H^{2}_{T}(Q,\mathbb{C})\cong \mathrm{Sym}^{\otimes 2 }H^{1}(E,\mathbb{C})\,,
\end{equation}
where $E$ is a certain universal elliptic curve family with level structure.
To be more precise,
the family $E$ is the one over the modular curve
$\Gamma_0(2)\backslash \mathcal{H}$
whose Picard-Fuchs operator
is given by
\begin{equation}\label{eqnPFellipticcurve}
\mathcal{D}_{\mathrm{elliptic}}:=\theta_{t}(\theta_{t}-{1\over 2})- t(\theta_{t}+{1\over 4})^{2}\,,\quad
\theta_{t}=t{\partial\over \partial t}\,,
\end{equation}
where $t$ is certain Hauptmodul on $\Gamma_0(2)\backslash \mathcal{H}$.
It is chosen such that the Fricke involution $W_{2}:\tau\mapsto -1/(2\tau)$ on $\mathcal{H}$
induces
\begin{equation}
W_{2}: t\mapsto {t\over t-1}\,.
\end{equation}
Solutions to \eqref{eqnPFellipticcurve} are vector-valued  elliptic modular forms for $\Gamma_{0}(2)$.
See \cite{Maier:2009, Zhou:2016mirror} for details.

The symmetric square structure in \eqref{eqnsymmetricquareHodgestructure}
can also be seen concretely \cite{Lian:1994, Lian:1996, Lian:1994zv} as follows.
\begin{itemize}
\item
First, by the Clausen identities \cite{Erdelyi:1981}[Section 4.3], the
differential operator \eqref{eqnPFK3beforetwist}
\begin{equation}\label{eqnPFK3beforetwistappendix}
\mathcal{D}_{\mathrm{quartic}}:=\theta_{b}(\theta_{b}-{1\over 4})(\theta_{b}-{2\over 4})
-b
(\theta_{b}+{1\over 4})^{3}\,,
\end{equation}
is the symmetric square of
\begin{equation}\label{eqnPFtriangle}
\mathcal{D}_{\mathrm{triangular}}:=\theta_{b}(\theta_{b}-{1\over 4})
-b
(\theta_{b}+{1\over 8})^{2}
\,.
\end{equation}

Consider instead of \eqref{eqnPFtriangle} the following operator, which
is  related to \eqref{eqnPFtriangle} by a twist by $b^{{1\over 8}}$ and has unipotent monodromy at $b=\infty$,
\begin{equation}\label{eqnshiftedtriangular}
b^{1\over 8}\circ \mathcal{D}_{\mathrm{triangular}}\circ b^{-{1\over 8}}
:=(\theta_{b}-{1\over 8})(\theta_{b}-{3\over 8})
-b
\theta_{b}^{2}\,.
\end{equation}
For the differential operator \eqref{eqnshiftedtriangular},
uniformization using Schwarzian \cite{Erdelyi:1981} equips \cite{Maier:2009, Dolgachev:1996, Dolgachev:2013} $\mathcal{M}$ with an orbifold structure
$$ \Gamma_{0}(2)^{+} \backslash \mathcal{H}^{*}-\{0,1,\infty\}\cong \mathcal{M}\,,$$
where $\Gamma_{0}(2)^{+}$
is the extension of the modular group $\Gamma_{0}(2)$ by the Fricke involution $W_{2}$.
The modular orbifold
$T_{4,2,\infty}=\Gamma_{0}(2)^{+} \backslash \mathcal{H}$
has signature $4,2,\infty$ at $b=0,1,\infty$  respectively.
In particular, up to the twist by $b^{1\over 8}$, solutions to \eqref{eqnPFtriangle} and thus to \eqref{eqnPFK3beforetwistappendix} are
automorphic forms for the group $\Gamma_{0}(2)^{+}$.
\item
The relation between solutions to
\eqref{eqnPFK3beforetwistappendix} and elliptic modular forms for
$\Gamma_{0}(2)$ is then established by
connecting solutions to \eqref{eqnPFtriangle} and
those to \eqref{eqnPFellipticcurve}.
This is achieved by making use of following quadratic relation for Gauss hypergeometric series \cite{Erdelyi:1981}[page 112]
\begin{equation}\label{eqnquadraticrelation}
_{2}F_{1}(\alpha,\beta;2\beta; t)
=
(1-t)^{-{1\over 2}\alpha}\,
_{2}F_{1}({1\over 2}\alpha,\beta-{1\over 2}\alpha; \beta+{1\over 2}; {t^{2}\over 4t-4})\,.
\end{equation}
To be more precise, identify the $\Gamma_{0}(2)^{+}$-invariant $b$
with the $W_{2}$-invariant function on $\Gamma_{0}(2) \backslash \mathcal{H}$ as
\begin{equation}\label{eqnW2invariant}
b={t^{2}\over 4t-4}: ~ \Gamma_{0}(2)\backslash \mathcal{H}\rightarrow \Gamma_{0}(2)^{+}\backslash \mathcal{H}\,.
\end{equation}
Then the two solutions to \eqref{eqnPFtriangle} given by
\begin{equation*} _{2}F_{1}({1\over 8},{1\over 8};{3\over 4}; b)\,,\quad
b^{1\over 4} \,_{2}F_{1}({3\over 8},{3\over 8};{5\over 4}; b)\,,
\end{equation*}
correspond via \eqref{eqnquadraticrelation} exactly to the
$(1-t)^{1\over 8}$-multiple of those
to  \eqref{eqnPFellipticcurve} given by
\begin{equation*}
 _{2}F_{1}({1\over 4},{1\over 4};{1\over 2}; t)\,,\quad
t^{1\over 2} \,_{2}F_{1}({3\over 4},{3\over 4};{3\over 2}; t)\,.
\end{equation*}

\item
It follows that solutions  to  \eqref{eqnPFK3beforetwistappendix}
are automorphic forms for $\Gamma_{4,2,\infty}\cong \Gamma_{0}(2)^{+}$ up to a twist by
 $b^{{1\over 8}}$
and are vector-valued elliptic  modular forms for $\Gamma(2)$ up to a further twist by $(1-t)^{1\over 8}$.
\end{itemize}

Either the Hodge-theoretic origin \eqref{eqnsymmetricquareHodgestructure} or
the concrete calculation on hypergeometric functions  tells that sections of $F^{3}D_{\bold{0}}\cong F^{3}\mathrm{gr}_{W}^{4}H$
are elliptic modular forms of weight two.

\subsection*{Integral presentation of monodromy group  from geometry}

Similar to the Fermat cubic case discussed in Section \ref{secexamples}, using the relation
to the Picard-Fuchs equation \eqref{eqnPFellipticcurve}  for the corresponding elliptic curve family, one can obtain
an integral representation for the monodromy group of \eqref{eqnPFK3beforetwistappendix} up to twist and conjugacy.
The details are given as follows.
In parallel to \eqref{eqnshiftedtriangular},
consider the twist of \eqref{eqnPFellipticcurve} given by
\begin{equation}\label{eqnPFellipticcurvetwist}
t^{1\over 4}\circ \mathcal{D}_{\mathrm{elliptic}}\circ t^{-{1\over 4}}
=(\theta_{t}-{1\over 4})(\theta_{t}-{3\over 4})- t\theta_{t}^{2}\,.
\end{equation}
The latter has unipotent monodromy at $t=1,\infty$.
Following \cite{Erdelyi:1981}, bases of solutions to \eqref{eqnPFellipticcurvetwist} and \eqref{eqnshiftedtriangular} are given by
\begin{eqnarray}\label{eqnbasisoftwistedelliptic}
u_{1}=~_{2}F_{1}({1\over 4},{3\over 4}; 1; t^{-1})\,,\quad
u_{2}=~_{2}F_{1}({1\over 4},{3\over 4};1; 1-t^{-1})\,.
\end{eqnarray}
and
\begin{eqnarray}\label{eqnbasisoftwistedtriangular}
v_{1}=~_{2}F_{1}({1\over 8},{3\over 8};1; b^{-1})
\,,\quad
v_{2}=~_{2}F_{1}({1\over 8},{3\over 8}; {1\over 2}; 1-b^{-1})\,,
\end{eqnarray}
respectively.
We also introduce a third solution to  \eqref{eqnshiftedtriangular} given by
\begin{eqnarray}\label{eqnextrasolutiontwistedtriangular}
v_{6}=(1-b^{-1})^{1\over 2}~_{2}F_{1}({7\over 8},{5\over 8}; {3\over 2}; 1-b^{-1})\,.
\end{eqnarray}
Identities among analytic continuations of hypergeometric series \cite{Erdelyi:1981}[page 65]
give
\begin{equation}
v_{1}=c_{1}v_{2}+c_{2}v_{6}\,,\quad
c_{1}={\Gamma({1\over 2})\over \Gamma({5\over 8})\Gamma({7\over 8})}\,,\quad
c_{2}={\Gamma(-{1\over 2})\over \Gamma({1\over 8})\Gamma({3\over 8})}\,.
\end{equation}
For readers' sake, we list solutions to various differential equations in Table \ref{tablevariousbases}.
\begin{table}[h]
  \centering
\caption{Solutions to various differential equations.}
  \label{tablevariousbases}
  \renewcommand{\arraystretch}{1.5} 
 \begin{tabular}{c|ccc}
 \hline
& $t^{1\over 4}\circ \mathcal{D}_{\mathrm{elliptic}}\circ t^{-{1\over 4}}$ &
$b^{1\over 8}\circ \mathcal{D}_{\mathrm{triangular}}\circ b^{-{1\over 8}} $ &
$b^{1\over 4}\circ \mathcal{D}_{\mathrm{quartic}}\circ b^{-{1\over 4}}$
 \\
&  \eqref{eqnPFellipticcurvetwist} &  \eqref{eqnshiftedtriangular} & \eqref{eqnsymmetricsquareofshiftedtriangular}\\
 \hline
solutions   & $u_{1},u_{2}$  & $v_{1},v_{2},\quad v_{6}$ \\
& \eqref{eqnbasisoftwistedelliptic}& \eqref{eqnbasisoftwistedtriangular}, ~ \eqref{eqnextrasolutiontwistedtriangular}   \\
   &  & $e_{1},e_{2}$ & $e_{1}^2,\, e_1 e_2, \, -2e_{2}^2$ \\
& & \eqref{eqnnewbasistwistedtriangular} &  \eqref{eqnbasistwistedK3} \\
\hline
\end{tabular}
\end{table}

Computations with the aid of modular forms
 \cite{Maier:2009} tell that
 for the elliptic curve family with the Picard-Fuchs operator given by \eqref{eqnPFellipticcurvetwist} one has
 $$\tau={\kappa_{E} u_2\over u_1}\,,\quad \kappa_{E}={i\over \sqrt{2}}\,.$$
and the monodromy group
for  \eqref{eqnPFellipticcurvetwist}
with respect to the basis $(u_1,\kappa u_2)$
is given by
$$T_{0,E}=
\begin{pmatrix}
-1 & -1\\
2 &1
\end{pmatrix}\,,
\quad
T_{1,E}=
\begin{pmatrix}
1 & 0\\
-2 &1
\end{pmatrix}\,,
\quad
T_{\infty,E}=
\begin{pmatrix}
1 & 1\\
0 &1
\end{pmatrix}\,,\quad T_{0,E}T_{1,E}T_{\infty,E}=\mathbb{1}\,.
$$
Here the subscript ``$E$'' stands for elliptic curve family.

Consider now a new fundamental basis of solutions to $b^{1\over 8}\circ \mathcal{D}_{\mathrm{triangular}}\circ b^{-{1\over 8}}$ in  \eqref{eqnshiftedtriangular}
\begin{equation}\label{eqnnewbasistwistedtriangular}
e=(e_{1},e_{2}):=(v_{1}, -\kappa_{E} v_{1}+\kappa v_{2})
=(c_{1} v_{2}+c_{2}v_{6}, \kappa_{E}(c_{1} v_{2}-c_{2}v_{6}))
\,,
\end{equation}
with $ \kappa=-{1\over 2\pi i} {\Gamma({1\over 8}) \Gamma({3\over 8})\over \Gamma({1\over 2}) }=2c_{1}\kappa_{E}$.
The following Schwarzian has the asymptotic behavior \cite{Erdelyi:1981} near $b= \infty$
$$s(0,{1\over 4},{1\over 2}; b^{-1}):= {\kappa  v_{2}\over v_{1}}
={1\over 2\pi i}\log ({b^{-1}\over 4^{4}})+{i\over \sqrt{2}}(1+\cdots)
\,.$$
Hence   in the basis $e$ the monodromy for \eqref{eqnshiftedtriangular} around $b=\infty$  is
$$T_{\infty,\mathrm{triangular}}=
\begin{pmatrix}
1 & 1\\
0 & 1
\end{pmatrix}\,.$$
In fact, near $b=\infty$, by lifting to the $t=\infty$ branch of \eqref{eqnW2invariant}, one has
\begin{equation*}
(e_{1},e_{2})=(u_{1},\kappa_{E} u_{2})\,.
\end{equation*}
The other branch around $t=1$ is related to this one by the Fricke involution $W_2$ that acts by $t^{-1}\mapsto 1-t^{-1}$ and
exchanges $u_{1},u_{2}$.
Around $b=1$, the local monodromy in the basis $(v_{2}, v_{6})$ is clearly given by the diagonal
matrix $\mathrm{diag}(1,-1)$.
It follows that the monodromies in the basis $e$ in \eqref{eqnnewbasistwistedtriangular} are given by
\begin{equation*}
T_{1,\mathrm{triangular}}=
\begin{pmatrix}
0 & {i\over \sqrt{2}}\\
{ \sqrt{2}\over i} & 0
\end{pmatrix}\,,\quad T_{0,\mathrm{triangular}}=(T_{1,\mathrm{triangular}}T_{\infty,\mathrm{triangular}})^{-1}=
\begin{pmatrix}
- {\sqrt{2}\over i} & {i\over \sqrt{2}}\\
{ \sqrt{2}\over i} & 0
\end{pmatrix}\,.
\end{equation*}

According to \cite{Dolgachev:1996, Dolgachev:2013},
the transcendental lattice $H_{T}^{2}(Q,\mathbb{Z})\cong
U\oplus \langle 4\rangle$
has the Gram matrix given by
\begin{equation}G=
\begin{pmatrix}
0 & 0 & 1\\
0 & 4 & 0\\
1& 0 &0
\end{pmatrix}\,.
\end{equation}
The presentation of the monodromy action on the
transcendental lattice should be integral and preserve the Gram matrix $G$, which we now check.
The relation between the quartic K3 in consideration and the product of two isogeneous elliptic curves from
the family $E$ above leads to a basis for the corresponding differential operator
\begin{equation}\label{eqnsymmetricsquareofshiftedtriangular}
b^{1\over 4}\circ \mathcal{D}_{\mathrm{quartic}}\circ b^{-{1\over 4}}=\mathrm{Sym}^{\otimes 2}
\left(
b^{1\over 8}\circ \mathcal{D}_{\mathrm{triangular}}\circ b^{-{1\over 8}}\right)
=(\theta_{b}-{1\over 4})(\theta_{b}-{2\over 4})(\theta_{b}-{3\over 4})
-b
\theta_{b}^{3}\,
\end{equation}
given by \cite{Dolgachev:1996, Dolgachev:2013} (cf. \cite{Nagura:1995, Hartmann:2013})
\begin{equation}\label{eqnbasistwistedK3}
(e_{1}^2,\, e_1 e_2, \, -2e_{2}^2) =(v_{1}^{2}, \kappa v_{1}v_{2}-\kappa_{E}v_{1}^2, -2\kappa^2 v_{2}^{2}+4\kappa \kappa_{E} v_{1}v_{2}+v_{1}^2)\,.
\end{equation}
Note that in the $t=\infty$ branch that covers $b=\infty$, this basis is lifted to
$(e_{1}^2, e_1 e_2, \, -2e_{2}^2) =(u_1^2, \kappa_{E} u_1 u_{2}, u_{2}^2)$ and in particular the lift of the normalized period there satisfies
\begin{equation}\label{eqnSchwarzianquarticCY}
s_{\bold{0}}:={e_{1}e_{2}\over e_{1}^2}={\kappa_{E} u_{2}\over u_{1}^2}=\tau\,.
\end{equation}
Then in this basis
the monodromies for \eqref{eqnsymmetricsquareofshiftedtriangular} around $0,1,\infty$ are given by
\begin{equation}\label{eqnintegralmonodromymarginalquartic}
 T_{0,\mathrm{K3}}=
\begin{pmatrix}
-2 &-1 & 1\\
4 & 1 &0\\
1 & 0 & 0
\end{pmatrix}\,,\quad
\quad
T_{1,\mathrm{K3}}=
\begin{pmatrix}
0 & 0 & 1\\
0 & 1 & 0\\
1& 0 &0
\end{pmatrix}\,,\quad
T_{\infty,\mathrm{K3}}=
\begin{pmatrix}
1 & 1 & -2\\
0 & 1 & -4\\
0& 0 &1
\end{pmatrix}\,.
\end{equation}
Since $TGT^{t}=G$,
they indeed preserve the Gram matrix as desired. Moreover, they
 have orders $4,2,\infty$ respectively, as should be the case
since  \eqref{eqnsymmetricsquareofshiftedtriangular}  corresponds to a representation of $\Gamma_{4,2,\infty}\cong \Gamma_{0}(2)^{+}$.

Another way to obtain integral presentations for the monodromy group is to follow the
computations on the hypergeometric group \cite{Beukers:1989}. The matrix presentation of the invariant Hermitian form however
is not given by the Gram matrix $G$ for the transcendental lattice $H_{T}^{2}(Q,\mathbb{Z})\cong
U\oplus \langle 4\rangle$ shown above.

\end{appendices}

\newcommand{\etalchar}[1]{$^{#1}$}
\providecommand{\bysame}{\leavevmode\hbox to3em{\hrulefill}\thinspace}
\providecommand{\MR}{\relax\ifhmode\unskip\space\fi MR }
\providecommand{\MRhref}[2]{%
  \href{http://www.ams.org/mathscinet-getitem?mr=#1}{#2}
}
\providecommand{\href}[2]{#2}

\bigskip{}


\noindent{\small 
Center for Mathematics and Interdisciplinary Sciences,
Fudan University, Shanghai 200433, P. R. China,\\ 
Shanghai Institute for Mathematics and Interdisciplinary
Sciences (SIMIS), Shanghai 200433, P. R. China,}

\noindent{\small Yau Mathematical Sciences Center, Tsinghua University, Beijing 100084, P. R. China}

\noindent{\small Email: \tt dingxinzhang@fudan.edu.cn}

\medskip{}

\noindent{\small Yau Mathematical Sciences Center, Tsinghua University, Beijing 100084, P. R. China}

\noindent{\small Email: \tt jzhou2018@mail.tsinghua.edu.cn}


\end{document}